\newtheorem{theorem}{Theorem}[section]
\newtheorem{proposition}[theorem]{Proposition}
\newtheorem{remark}{Remark}[section]
\newtheorem{lemma}[theorem]{Lemma}
\newenvironment{proof}[1][Proof]{\noindent\textbf{#1.} }{\ \rule{0.5em}{0.5em}}
\begin{document}

\title{The effect of nonlocal term on the superlinear
Kirchhoff type equations in $\mathbb{R}^{N}$\thanks{
J. Sun was supported by the National Natural Science Foundation of China
(Grant No. 11671236). T.F. Wu was supported in part by the Ministry of
Science and Technology and the National Center for Theoretical Sciences,
Taiwan.}}
\author{Juntao Sun$^{a,b}$\thanks{%
E-mail address: jtsun@sdut.edu.cn(J. Sun)}, Tsung-fang Wu$^{c}$\thanks{%
E-mail address: tfwu@nuk.edu.tw (T.-F. Wu)} \\
{\footnotesize $^a$\emph{School of Mathematics and Statistics, Shandong
University of Technology, Zibo 255049, PR China }}\\
{\footnotesize $^b$\emph{School of Mathematical Sciences, Qufu Normal
University, Shandong 273165, PR China}}\\
{\footnotesize $^c$\emph{Department of Applied Mathematics, National
University of Kaohsiung, Kaohsiung 811, Taiwan }}}
\date{}
\maketitle

\begin{abstract}
We are concerned with a class of
Kirchhoff type equations in $\mathbb{R}^{N}$ as follows:
\begin{equation*}
\left\{
\begin{array}{ll}
-M\left( \int_{\mathbb{R}^{N}}|\nabla u|^{2}dx\right) \Delta u+\lambda
V\left( x\right) u=f(x,u) & \text{in }\mathbb{R}^{N}, \\
u\in H^{1}(\mathbb{R}^{N}), &
\end{array}%
\right.
\end{equation*}%
where $N\geq 1,$ $\lambda>0$ is a parameter, $M(t)=am(t)+b$ with $a,b>0$ and
$m\in C(\mathbb{R}^{+},\mathbb{R}^{+})$, $V\in C(\mathbb{R}^{N},\mathbb{R}^{+})$
and $f\in C(\mathbb{R}^{N}\times \mathbb{R}, \mathbb{R})$ satisfying $\lim_{|u|\rightarrow \infty }f(x,u) /|u|^{k-1}=q(x)$ uniformly in $x\in \mathbb{R}^{N}$ for any $2<k<2^{\ast}$($2^{\ast}=\infty$ for $N=1,2$ and $2^{\ast}=2N/(N-2)$ for $N\geq 3$). Unlike most other
papers on this problem, we are more interested in the effects of the functions $m$ and $q$ on the number and behavior of solutions. By using minimax method as well as Caffarelli-Kohn-Nirenberg inequality, we obtain the existence and multiplicity of positive solutions for the above problem.
\end{abstract}

\textit{Key words:} Nontrivial solutions, Kirchhoff type equations,
Caffarelli-Kohn-Nirenberg inequality, Steep potential well.

\section{Introduction}

Consider the following nonlinear Kirchhoff type equations:
\begin{equation}
\left\{
\begin{array}{ll}
-M\left( \int_{\mathbb{R}^{N}}|\nabla u|^{2}dx\right) \Delta u+\lambda
V\left( x\right) u=f(x,u) & \text{in }\mathbb{R}^{N}, \\
u\in H^{1}(\mathbb{R}^{N}), &
\end{array}%
\right.  \tag*{$\left( K_{\lambda ,a}\right) $}
\end{equation}%
where $N\geq 1,$ $\lambda>0$ is a parameter, $M(t)=am(t)+b$ with $a,b>0$ and
$m$ being positive continuous function on $\mathbb{R}^{+}$, and $f$ is a
continuous function on $\mathbb{R}^{N}\times \mathbb{R}$ such that $%
f(x,s)\equiv 0$ for all $x\in \mathbb{R}^{N}$ and $s<0.$ We assume that the
potential $V(x)$ satisfies the following hypotheses:

\begin{itemize}
\item[$(V1)$] $V\in C(\mathbb{R}^{N},\mathbb{R}^{+})$ and there exists $%
c_{0}>0$ such that the set $\{V<c_{0}\}:=\{x\in \mathbb{R}^{N}\ |\
V(x)<c_{0}\}$ has finite positive Lebesgue measure, where $\left\vert \cdot
\right\vert $ is the Lebesgue measure;

\item[$(V2)$] $\Omega =intV^{-1}(0)$ is nonempty and has smooth boundary
with $\overline{\Omega }=V^{-1}(0).$
\end{itemize}

Kirchhoff type equations, of the form similar to Eq. $(K_{a,\lambda }),$ are
often referred to as being nonlocal because of the presence of the integral.
When $m(t)=t,$ Eq. $(K_{a,\lambda })$ is analogous to the stationary case of
equations that arise in the study of string or membrane vibrations, namely,
\begin{equation}
u_{tt}-\left( a\int_{\Omega }|\nabla u|^{2}dx+b\right) \Delta u=f(x,u),
\label{1}
\end{equation}%
where $\Omega $ is a bounded domain in $\mathbb{R}^{N}.$ As an extension of
the classical D'Alembert's wave equation, Eq. (\ref{1}) was first presented
by Kirchhoff \cite{K} in 1883 to describe the transversal oscillations of a
stretched string, particularly, taking into account the subsequent change in
string length caused by oscillations, where $u$ denotes the displacement, $f$
is the external force and $b$ is the initial tension while $a$ is related to
the intrinsic properties of the string, such as Young's modulus.

Since Lions \cite{L} introduced an abstract framework to the Kirchhoff type
equations, the qualitative analysis of nontrivial solutions for such
equations with various nonlinear terms, including the existence and
multiplicity of positive solutions and of sign-changing solution, has begun
to receive much attention; see, for example, \cite%
{ACM,BB,CKW,MR,MC,ML,N,S,SW3,ZP} for the bounded domain case and \cite%
{DPS,FIJ,HL,HZ,I,SW1,SW2} for the unbounded domain case.

Let us briefly comment some well-known results for the nonlinear Kirchhoff
type equations. Alves-Corr\^{e}a-Ma \cite{ACM} studied the following
Kirchhoff type equations in the bounded domain $\Omega :$

\begin{equation}
\left\{
\begin{array}{ll}
-M\left( \int_{\mathbb{R}^{N}}|\nabla u|^{2}dx\right) \Delta u=f(x,u) &
\text{in }\Omega , \\
u\in H_{0}^{1}(\Omega ). &
\end{array}%
\right.  \label{2}
\end{equation}%
They concluded the existence of positive solutions of Eq. (\ref{2}) when $M$
does not grow too fast in a suitable interval near zero and $f$ is locally
Lipschitz subject to some prescribed criteria. Bensedik-Bouchekif \cite{BB}
studied the asymptotically linear case and obtained the existence of
positive solutions of Eq. (\ref{2}) when the function $M$ is a
non-decreasing function and $M(t)\geq m_{0}$ for some $m_{0}>0$, and $f$ is
the asymptotically linear satisfying some assumptions about its asymptotic
behaviors near zero and infinite. Later, Chen-Kuo-Wu \cite{CKW} illustrated
the difference in the solution behavior which arises from the consideration
of the nonlocal effect for Eq. (\ref{2}) with $M(t)=at+b$ and $f$ being
concave-convex nonlinearity.

Compared with the bounded domain case, the unbounded domain case seems to be
more delicate. The primary difficulty lies in the lack of the embedding of
compactness. Figueiredo-Ikoma-J\'{u}nior \cite{FIJ} studied the existence
and concentration behaviors of positive solutions to the following Kirchhoff
type equations:

\begin{equation}
\left\{
\begin{array}{ll}
-\varepsilon ^{2}M\left( \varepsilon ^{2-N}\int_{\mathbb{R}^{N}}|\nabla
u|^{2}dx\right) \Delta u+V\left( x\right) u=f(u) & \text{in }\mathbb{R}^{N},
\\
u\in H^{1}(\mathbb{R}^{N}). &
\end{array}%
\right.  \label{3}
\end{equation}%
Under suitable conditions on $M$ and $f$, a family of positive solutions for
Eq. (\ref{3}) concentrating at a local minimum of $V$ are constructed.
Recently, we \cite{SW1} introduced the hypotheses $(V1)-(V2)$ to Kirchhoff
type equations in $\mathbb{R}^{N}$($N\geq 3$) and studied the existence of
nontrivial solution for Eq. $\left( K_{\lambda ,a}\right) $ with $m(t)=t$
and $f$ being asymptotically linear or superlinear at infinity on $u.$

Inspired by the above facts, in this paper we are likewise interested in
looking for nontrivial solutions for Eq. $(K_{a,\lambda })$ in $\mathbb{R}%
^{N}$ with $N\geq 1$. However, distinguishing from the existing literatures,
we are more focus on the interaction between the functions $m$ and $f,$
leading to the difference in the number of solutions. Specifically, we find
that the powers of $m$ and $f$ will dominate the number of solutions for Eq.
$(K_{a,\lambda }).$ We require that the function $m$ satisfies some
asymptotic behaviors near infinite and that $f$ is $k$-asymptotically linear
at infinity on $u$ for any real number $2<k<2^{\ast },$ i.e., $%
\lim_{|u|\rightarrow \infty }f\left( x,u\right) /|u|^{k-1}=q\left( x\right) $
uniformly in $x\in \mathbb{R}^{N},\ $while not requiring any assumption
about the asymptotic behavior near zero of $m$ and $f.$

We wish to point out that in the study of one positive solution, the range
of the parameter $a>0$ in Eq. $(K_{a,\lambda })$ is dependent on the
limiting function $q$ of $f.$ In other words, the different types of $q$
will bring about the different ranges of $a.$ Moreover, in the study of two
positive solutions, the geometry of the variational structure of Eq. $%
(K_{a,\lambda })$ is known to have a local minimum and a mountain pass,
since the power of $m$ is greater than the one of $f.$ In view of this, it
is clear to use the minimax method to seek two solutions of Eq. $%
(K_{a,\lambda })$ as critical points of the associated energy functional $%
J_{a,\lambda }$. However, since the norms $\Vert u\Vert _{D^{1,2}}=\left(
\int_{\mathbb{R}^{N}}|\nabla u|^{2}dx\right) ^{1/2}$ and $\Vert u\Vert
_{H^{1}}=\left( \int_{\mathbb{R}^{N}}(|\nabla u|^{2}+u^{2})dx\right) ^{1/2}$
are not equivalent in $H^{1}(\mathbb{R}^{N})$, we can not apply the standard
techniques to verify the boundedness below of $J_{a,\lambda }$ and the
boundedness of the $(PS)$-sequence.

Based on the analysis above, we suggest some new techniques and introduce
new hypotheses on $m$ and $q$ in the present paper. By using the minimax
method and Caffarelli-Kohn-Nirenberg inequality, we obtain the existence and
multiplicity of positive solutions for Eq. $(K_{a,\lambda })$ under the
different assumptions on $m$ and $f$, respectively.

We now summarize our main results as follows.

\begin{theorem}
\label{t1}Suppose that $N\geq 1$ and conditions $(V1)-(V2)$ hold. In
addition, for any real number $2<k<2^{\ast },$ we assume that the functions $%
m$ and $f$ satisfy the following conditions:

\begin{itemize}
\item[$\left( L1\right) $] there exists $m_{\infty }>0$ such that $%
\lim_{t\rightarrow \infty }t^{-(k-2)/2}m(t)=m_{\infty }$ and%
\begin{equation*}
\int_{\sigma }^{\eta }m(t)dt\geq \frac{2(\eta -\sigma )}{k}m(\eta )
\end{equation*}%
for all $0\leq \sigma <\eta ;$

\item[$\left( D1\right) $] there exist $q\in L^{\infty }(\mathbb{R}^{N})$
and $0\leq \mu <N-\frac{k(N-2)}{2}$ satisfying $q\not\equiv 0$ on $\overline{%
\Omega }$ and $\liminf_{\left\vert x\right\vert \rightarrow \infty
}\left\vert x\right\vert ^{\mu }q\left( x\right) >0$ such that%
\begin{equation*}
\lim_{s\rightarrow \infty }\frac{f\left( x,s\right) }{s^{k-1}}=q\left(
x\right) \text{ uniformly in }x\in \mathbb{R}^{N};
\end{equation*}

\item[$\left( D2\right) $] $s\mapsto \frac{f(x,s)}{s^{k-1}}$ is
nondecreasing function on $(0,\infty )$ for any fixed $x\in \mathbb{R}.$

Then there exists $\widetilde{{\Lambda }}>0$ such that Eq. $\left(
K_{\lambda ,a}\right) $ admits at least one positive solution for all $%
\lambda >\widetilde{{\Lambda }}$ and $a>0.$
\end{itemize}
\end{theorem}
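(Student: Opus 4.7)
My plan is to obtain a positive solution as a mountain-pass critical point of the $C^{1}$ functional
\begin{equation*}
J_{a,\lambda}(u)=\frac{a}{2}\widehat{m}\bigl(\|u\|_{D^{1,2}}^{2}\bigr)+\frac{b}{2}\|u\|_{D^{1,2}}^{2}+\frac{\lambda}{2}\int_{\mathbb{R}^{N}}Vu^{2}-\int_{\mathbb{R}^{N}}F(x,u),
\end{equation*}
with $\widehat{m}(s)=\int_{0}^{s}m(\tau)\,d\tau$ and $F(x,s)=\int_{0}^{s}f(x,\tau)\,d\tau$, working on the Hilbert space $H_{\lambda}=\{u\in H^{1}(\mathbb{R}^{N}):\int Vu^{2}<\infty\}$ endowed with $\|u\|_{\lambda}^{2}=\int(|\nabla u|^{2}+\lambda Vu^{2})$. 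A standard steep-well analysis based on $(V1)$--$(V2)$ yields a threshold $\widetilde{\Lambda}>0$ such that for $\lambda>\widetilde{\Lambda}$ the embedding $H_{\lambda}\hookrightarrow L^{p}(\mathbb{R}^{N})$ is compact for every $2\le p<2^{\ast}$. Since $f(x,s)\equiv 0$ for $s<0$, any nontrivial critical point is automatically nonnegative, and the strong maximum principle then promotes this to strict positivity.

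\textbf{Mountain-pass geometry.} From $(D2)$, the map $s\mapsto f(x,s)/s^{k-1}$ is nondecreasing on $(0,\infty)$ and hence bounded above by its limit $q(x)\in L^{\infty}$, giving $F(x,u)\le(\|q\|_{\infty}/k)|u|^{k}$; combined with $H_{\lambda}\hookrightarrow L^{k}$ this produces $J_{a,\lambda}(u)\ge c_{1}\|u\|_{\lambda}^{2}-c_{2}\|u\|_{\lambda}^{k}$, so $\inf_{\|u\|_{\lambda}=\rho}J_{a,\lambda}\ge\alpha$ for some small $\rho,\alpha>0$ independent of $\lambda$. For the downhill condition I fix $\varphi\in C_{c}^{\infty}(\mathbb{R}^{N})$, $\varphi\ge 0$, $\varphi\not\equiv 0$, with $\mathrm{supp}\,\varphi$ bounded away from the origin, and rescale $\varphi_{\sigma}(x)=\varphi((x-x_{0})/\sigma)$; then $\|\varphi_{\sigma}\|_{D^{1,2}}^{2}=\sigma^{N-2}\|\varphi\|_{D^{1,2}}^{2}$, and since $|x|\sim\sigma$ on $\mathrm{supp}\,\varphi_{\sigma}$ for $\sigma$ large, the lower bound $q(x)\ge c|x|^{-\mu}$ from $(D1)$ gives
\begin{equation*}
\frac{\int q\,\varphi_{\sigma}^{k}\,dx}{\|\varphi_{\sigma}\|_{D^{1,2}}^{k}}\gtrsim\sigma^{\,N-\mu-(N-2)k/2}\longrightarrow\infty\quad(\sigma\to\infty),
\end{equation*}
where the positivity of the exponent is exactly the Caffarelli--Kohn--Nirenberg admissibility bound $\mu<N-k(N-2)/2$ in $(D1)$. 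For any prescribed $a>0$ I therefore choose $\sigma$ so large that $a\,m_{\infty}\|\varphi_{\sigma}\|_{D^{1,2}}^{k}<\int q\,\varphi_{\sigma}^{k}\,dx$; using the asymptotic half of $(L1)$, namely $\widehat{m}(T)=(2m_{\infty}/k)T^{k/2}(1+o(1))$ as $T\to\infty$, the leading $t^{k}$-coefficient of $J_{a,\lambda}(t\varphi_{\sigma})$ equals $(1/k)\bigl[a\,m_{\infty}\|\varphi_{\sigma}\|_{D^{1,2}}^{k}-\int q\varphi_{\sigma}^{k}\bigr]<0$, so $J_{a,\lambda}(t\varphi_{\sigma})\to-\infty$ and the mountain-pass value $c\ge\alpha>0$ is well defined.

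\textbf{Palais--Smale and conclusion.} For a $(PS)_{c}$-sequence $(u_{n})$ I consider $J_{a,\lambda}(u_{n})-(1/k)\langle J_{a,\lambda}'(u_{n}),u_{n}\rangle$: the integral inequality in $(L1)$ with $\sigma=0$ supplies $(a/2)\widehat{m}(\|u_{n}\|_{D^{1,2}}^{2})-(a/k)m(\|u_{n}\|_{D^{1,2}}^{2})\|u_{n}\|_{D^{1,2}}^{2}\ge 0$, hypothesis $(D2)$ supplies $(1/k)f(x,u_{n})u_{n}-F(x,u_{n})\ge 0$, and what remains yields
\begin{equation*}
\bigl(\tfrac{1}{2}-\tfrac{1}{k}\bigr)\min(b,1)\,\|u_{n}\|_{\lambda}^{2}\le c+o(1)+o(\|u_{n}\|_{\lambda}),
\end{equation*}
so $(u_{n})$ is bounded in $H_{\lambda}$. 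Extracting $u_{n}\rightharpoonup u_{0}$ in $H_{\lambda}$, the compact embedding $H_{\lambda}\hookrightarrow L^{k}$ passes the nonlinear term to the limit, identifying $u_{0}$ as a critical point; testing with $u_{n}-u_{0}$ and using $M(\|u_{n}\|_{D^{1,2}}^{2})\ge b>0$ together with the compact embedding into $L^{2}$ to handle the $\lambda V$-weighted term promotes this to strong convergence $u_{n}\to u_{0}$ in $H_{\lambda}$. Since $c\ge\alpha>0$, $u_{0}$ is nontrivial, hence a positive solution of $(K_{\lambda,a})$ by the preceding remarks.

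\textbf{Main obstacle.} The central difficulty is that the nonlocal term $\widehat{m}(\|u\|_{D^{1,2}}^{2})$ and the nonlinearity $\int F(x,u)$ share the same asymptotic scaling $\|u\|^{k}$, while the norms $\|\cdot\|_{D^{1,2}}$ and $\|\cdot\|_{\lambda}$ are not equivalent on $H_{\lambda}$; consequently, neither the classical Ambrosetti--Rabinowitz approach nor a direct coercivity estimate is applicable. The inequality half of $(L1)$ is fine-tuned precisely to furnish the nonnegative Kirchhoff remainder in the $J-(1/k)\langle J',\cdot\rangle$ manipulation, and the Caffarelli--Kohn--Nirenberg admissibility window $\mu<N-k(N-2)/2$ in $(D1)$ is exactly what enables the leading $t^{k}$-coefficient of $J_{a,\lambda}(t\varphi_{\sigma})$ to be driven strictly below zero for every $a>0$ along the concentrating family $\varphi_{\sigma}$.
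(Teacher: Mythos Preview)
Your mountain-pass geometry and the boundedness argument for Cerami sequences via $J_{a,\lambda}(u_n)-\frac{1}{k}\langle J_{a,\lambda}'(u_n),u_n\rangle$ track the paper's Lemmas~3.1, 3.3 and~4.1 closely, and the use of the scaling family to drive
\[
\inf_{u}\frac{\|u\|_{D^{1,2}}^{k}}{\int q|u|^{k}}=0
\]
via the exponent window $\mu<N-\frac{k(N-2)}{2}$ is exactly the paper's mechanism.

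However, the compactness step has a genuine gap. Your claim that ``a standard steep-well analysis based on $(V1)$--$(V2)$ yields a threshold $\widetilde{\Lambda}$ such that for $\lambda>\widetilde{\Lambda}$ the embedding $H_{\lambda}\hookrightarrow L^{p}(\mathbb{R}^{N})$ is compact'' is \emph{false} under the stated hypotheses. Condition $(V1)$ only asserts that $\{V<c_{0}\}$ has finite measure for \emph{one} value $c_{0}$; nothing prevents $V\equiv c_{0}$ outside a compact set, in which case $H_{\lambda}=H^{1}(\mathbb{R}^{N})$ with an equivalent norm for every fixed $\lambda$, and the embedding into $L^{p}(\mathbb{R}^{N})$ is not compact for any $\lambda$. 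Your passage to the limit in the nonlinear term and the subsequent upgrade to strong convergence both rest on this unavailable compactness.

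The paper confronts precisely this difficulty in Proposition~4.2. Writing $v_{n}=u_{n}-u_{0}$, it uses only $L^{r}_{\mathrm{loc}}$ convergence and a Brezis--Lieb splitting to obtain
\[
o(1)=\int\lambda V v_{n}^{2}+\bigl(am(\|u_{n}\|_{D^{1,2}}^{2})+b\bigr)\|v_{n}\|_{D^{1,2}}^{2}-\int f(x,v_{n})v_{n},
\]
together with the $\lambda$-dependent interpolation estimate $\int |v_{n}|^{k}\le \Pi_{\lambda,k}\|v_{n}\|_{\lambda}^{k}+o(1)$, where $\Pi_{\lambda,k}\to 0$ as $\lambda\to\infty$. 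A second use of the $(L1)$/$(D2)$ combination bounds $\|v_{n}\|_{\lambda}$ by a constant depending only on the energy ceiling $D$; feeding this back yields
\[
\min\{1,b\}\|v_{n}\|_{\lambda}^{2}\le |q|_{\infty}\Pi_{\lambda,k}\Bigl(\tfrac{2k\widetilde{D}}{(k-2)\min\{1,b\}}\Bigr)^{k/2}+o(1),
\]
which forces $v_{n}\to 0$ once $\lambda$ is taken large enough that the right-hand side is small. This is why $\widetilde{\Lambda}$ genuinely depends on $a$ and on the mountain-pass ceiling $\alpha_{0,a}(\Omega)$, and why the compactness is a $(C)_{\alpha}$-condition restricted to levels $\alpha<D$, not a global property of the embedding.
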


\begin{remark}
\label{r1}$(i)$ It is not difficult to find such functions $m$ satisfying
condition $(L1)$. For example, let $m(t)=t^{(k-2)/2}+\theta t^{(k-2)/4}$ for
$t>0$, where $2<k<2^{\ast }$ and $\theta \geq 0.$ Clearly, $%
\lim_{t\rightarrow \infty }t^{-(k-2)/2}m(t)=1$. Moreover, a direct
calculation shows that%
\begin{eqnarray*}
\int_{\sigma }^{\eta }m(t)dt &=&\frac{2}{k}\left( \eta ^{k/2}-\sigma
^{k/2}\right) +\frac{4\theta }{k+2}\left( \eta ^{\left( k+2\right)
/4}-\sigma ^{\left( k+2\right) /4}\right)  \\
&\geq &\frac{2}{k}\left( \eta ^{k/2}-\eta ^{\left( k-2\right) /2}\sigma
\right) +\frac{4\theta }{k+2}\left( \eta ^{\left( k+2\right) /4}-\eta
^{\left( k-2\right) /4}\sigma \right)  \\
&>&\frac{2(\eta -\sigma )}{k}m\left( \eta \right) \text{ for all }0\leq
\sigma <\eta .
\end{eqnarray*}%
$(ii)$ Under condition $(D1),$ it is not difficult to verify that for any
real number $2<k<2^{\ast },$%
\begin{equation*}
\inf_{u\in H^{1}(\mathbb{R}^{N})}\frac{\left( \int_{\mathbb{R}^{N}}|\nabla
u|^{2}dx\right) ^{k/2}}{\int_{\mathbb{R}^{N}}q(x)|u|^{k}dx}=0.
\end{equation*}%
For more details, we refer to the proof of Lemma \ref{lem3} below.
\end{remark}

We now assume that the function $m$ satisfies the following assumptions
instead of condition $(L1)$:

\begin{itemize}
\item[$\left( L2\right) $] $m(t)$ is nondecreasing on $t\geq 0;$

\item[$(L3)$] there exist three positive numbers $m_{0},\delta $ and $T_{0}$
such that $m(t)\geq m_{0}t^{\delta }$ for all $t\geq T_{0}.$
\end{itemize}

Then we have the following result.

\begin{theorem}
\label{t2}Suppose that $N\geq 3,$ conditions $(V1)-(V2),(L2)$ and $(L3)$
with $\delta \geq \frac{2}{N-2}$ hold. In addition, for any real number $%
2<k<2^{\ast },$ we assume that the function $f$ satisfies conditions $%
(D1)-(D2).$ Then there exist constants $\widetilde{a}_{\ast },\widetilde{%
\Lambda }_{\ast }>0$ such that for every $0<a<\widetilde{a}_{\ast }$ and $%
\lambda >\widetilde{\Lambda }_{\ast },$ Eq. $(K_{a,\lambda })$ admits at
least two positive solutions $u_{a,\lambda }^{-}$ and $u_{a,\lambda }^{+}$
satisfying $J_{a,\lambda }(u_{a,\lambda }^{-})<0<J_{a,\lambda }(u_{a,\lambda
}^{+}).$ In particular, $u_{a,\lambda }^{-}$ is a ground state solution of
Eq. $(K_{a,\lambda }).$
\end{theorem}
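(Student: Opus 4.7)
Work in the Hilbert space
$$E_\lambda := \Bigl\{u \in D^{1,2}(\mathbb{R}^N) : \int_{\mathbb{R}^N} V u^2\, dx < \infty\Bigr\},\qquad (u,v)_\lambda = \int_{\mathbb{R}^N}(b\nabla u \cdot \nabla v + \lambda V u v)\, dx,$$
with energy $J_{a,\lambda}(u) = \tfrac12\|u\|_\lambda^2 + \tfrac{a}{2}\widehat m(\|u\|_{D^{1,2}}^2) - \int F(x,u)\,dx$, where $\widehat m(t) = \int_0^t m(s)\,ds$. For $\lambda$ sufficiently large, $(V1)$ yields the compact embedding $E_\lambda \hookrightarrow L^s(\mathbb{R}^N)$ for each $s \in [2, 2^\ast)$, and $(D1)$ combined with the Caffarelli--Kohn--Nirenberg inequality (making essential use of $\mu < N - k(N-2)/2$) gives both the control $\int q(x)|u|^k\, dx \lesssim \|u\|_{D^{1,2}}^k$ and weak sequential continuity of this functional on bounded subsets of $E_\lambda$.

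Next, establish three geometric properties of $J_{a,\lambda}$. \emph{Coercivity:} condition $(L3)$ with $\delta \geq 2/(N-2)$ gives $\widehat m(t) \gtrsim t^{\delta+1}$ for $t$ large, so $\widehat m(\|u\|_{D^{1,2}}^2) \gtrsim \|u\|_{D^{1,2}}^{2(\delta+1)} \geq \|u\|_{D^{1,2}}^{2^\ast}$; since $k < 2^\ast$, the nonlocal term dominates $\int F$ and $J_{a,\lambda}(u) \to +\infty$ as $\|u\|_\lambda \to \infty$. \emph{Sphere barrier:} the standard bound $|F(x,u)| \leq \varepsilon u^2 + C_\varepsilon |u|^k$ plus Sobolev gives $J_{a,\lambda}(u) \geq \alpha > 0$ on a small sphere $\|u\|_\lambda = \rho$. \emph{Negative level:} choose $\phi \in C_0^\infty(\Omega)$ with $\int q\phi^k > 0$; because $V \equiv 0$ on $\Omega$, $\|\phi\|_\lambda^2 = b\|\phi\|_{D^{1,2}}^2$ is $\lambda$-independent, and the fibering map
$$t \mapsto \tfrac{bt^2}{2}\|\phi\|_{D^{1,2}}^2 + \tfrac{a}{2}\widehat m(t^2\|\phi\|_{D^{1,2}}^2) - \tfrac{t^k}{k}\int q\phi^k\bigl(1+o(1)\bigr)$$
attains a strictly negative value at some $t_0 > 0$ with $\|t_0\phi\|_\lambda > \rho$, provided $a$ is small enough (using $2 < k < 2(\delta+1)$).

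The solution $u_{a,\lambda}^-$ then follows by direct minimization: $c^- := \inf_{E_\lambda} J_{a,\lambda} \leq J_{a,\lambda}(t_0\phi) < 0$ is attained because coercivity bounds every minimizing sequence in $E_\lambda$ (hence in $D^{1,2}$), while the compact embedding and weak sequential continuity of the nonlinear and nonlocal terms allow passage to the limit. Since $f(x,s) = 0$ for $s<0$, one has $u_{a,\lambda}^- \geq 0$, and the strong maximum principle upgrades this to $u_{a,\lambda}^- > 0$; as $c^- = \inf J_{a,\lambda}$, this is a ground state. The second solution $u_{a,\lambda}^+$ is produced by the Mountain Pass Theorem along paths from $0$ to $t_0 \phi$: the sphere barrier gives $c^+ \geq \alpha > 0$, a $(PS)_{c^+}$-sequence is bounded by coercivity and compact by the steep-well property, so one obtains a critical point at level $c^+ > 0$; monotonicity $(D2)$ is used in the positivity/non-triviality argument in the usual Nehari-type way.

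\emph{Main obstacle.} The chief technical difficulty, flagged in the introduction, is that $\|\cdot\|_{D^{1,2}}$ and $\|\cdot\|_{H^1}$ are not equivalent on $\mathbb{R}^N$, so the nonlocal term only controls the gradient part of $\|\cdot\|_\lambda$. Securing coercivity, the sphere barrier, and $(PS)$-compactness \emph{simultaneously} in the $E_\lambda$-norm requires careful use of the steep well $(V1)$--$(V2)$ together with quantitative thresholds: $a < \widetilde a_\ast$ must be small enough that the nonlocal term does not destroy the negative dip built from $\phi$, while $\lambda > \widetilde\Lambda_\ast$ must be large enough to activate the compact embedding into $L^s$ and to make the barrier height $\alpha$ compatible with the negative level $c^-$ so that $c^+$ is strictly above both. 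These two couplings, sharpened by the Caffarelli--Kohn--Nirenberg control on the $q$-term, are what fix the constants $\widetilde a_\ast$ and $\widetilde\Lambda_\ast$.
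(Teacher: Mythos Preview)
Your overall architecture---minimize $J_{a,\lambda}$ globally for the negative-energy solution and run a mountain pass for the positive one---is exactly what the paper does. But two of your technical pillars are incorrect, and each one sits underneath a step you cannot do without.

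\textbf{The CKN bound fails under $(D1)$.} Condition $(D1)$ gives $q\in L^\infty$ together with the \emph{lower} bound $\liminf_{|x|\to\infty}|x|^\mu q(x)>0$; there is no upper bound of Caffarelli--Kohn--Nirenberg type on $q$. The estimate $\int q|u|^k\lesssim\|u\|_{D^{1,2}}^k$ is precisely the content of hypothesis $(D1)'$, used for Theorems~1.3--1.4, not here; under $(D1)$ the paper in fact proves the opposite, namely $\inf_u\|u\|_{D^{1,2}}^k/\int q|u|^k=0$. Your coercivity argument therefore collapses: the nonlocal term cannot by itself dominate $\int F$. The paper obtains boundedness below (and $J_{a,\lambda}\ge0$ outside a large $\|\cdot\|_\lambda$-ball) by using only $q\in L^\infty$, interpolating $\int|u|^k$ between $L^2$ and $L^{2^\ast}$ via $\int u^2\le(\lambda c_0)^{-1}\int\lambda Vu^2+C\|u\|_{D^{1,2}}^2$, and then running a case analysis on the relative sizes of $\|u\|_{D^{1,2}}$ and $\int\lambda Vu^2$. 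This is where the threshold $\lambda>\widetilde\Lambda_\ast$ first appears, and the borderline case $\delta=\tfrac{2}{N-2}$ (where $2(\delta+1)=2^\ast$) is genuinely delicate.

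\textbf{There is no compact embedding.} Conditions $(V1)$--$(V2)$ do not make $E_\lambda\hookrightarrow L^s(\mathbb R^N)$ compact for any $\lambda$: take $V\equiv c_0$ outside $\overline\Omega$, so that $E_\lambda$ is $H^1(\mathbb R^N)$ with an equivalent norm, and the embedding is non-compact. What the steep well actually provides is a quantitative tail estimate $\int|v_n|^r\le\Pi_{\lambda,r}\|v_n\|_\lambda^r+o(1)$ for $v_n=u_n-u_0$, with $\Pi_{\lambda,r}\to0$ as $\lambda\to\infty$; combined with Brezis--Lieb splitting and an a~priori bound on $\|v_n\|_\lambda$, this yields the Cerami condition only for $\lambda$ sufficiently large. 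In particular, your ``direct minimization'' does not go through as written, since weak lower semicontinuity of $u\mapsto-\int F(x,u)$ is unavailable without the compactness you assumed. The paper instead uses Ekeland's principle to produce a minimizing sequence that is simultaneously a Cerami sequence, and then invokes the $\lambda$-dependent compactness above to pass to the limit.
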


\begin{remark}
\label{r2} Compared with Theorem \ref{t1}, if we raise the power $\delta $
of the function $m$ such that $\delta \geq \frac{2}{N-2}>\frac{k-2}{2}$ for $%
N\geq 3$ in Theorem \ref{t2}, then two positive solutions of Eq. $%
(K_{a,\lambda })$ can be obtained.
\end{remark}

It is well known that for $N\geq 3$ and $2<k<2^{\ast },$ the following
minimum problem%
\begin{equation}
\overline{\nu }_{1}^{(k)}:=\inf_{u\in D^{1,2}(\mathbb{R}^{N})}\frac{\left(
\int_{\mathbb{R}^{N}}|\nabla u|^{2}dx\right) ^{k/2}}{\int_{\mathbb{R}%
^{N}}|x|^{\frac{k(N-2)}{2}-N}|u|^{k}dx}>0  \label{1-1}
\end{equation}%
is achieved by some $\overline{\varphi }_{k}\in D^{1,2}(\mathbb{R}^{N})$ by
Caffarelli-Kohn-Nirenberg inequality.

We now assume that the following assumption holds:

\begin{itemize}
\item[$\left( D1\right) ^{\prime }$] there exist the function $q(x)$
satisfying $q\not\equiv 0$ on $\overline{\Omega }$ and $q\left( x\right)
\leq c_{\ast }|x|^{\frac{k(N-2)}{2}-N}$ for some $c_{\ast }>0$ and for all $%
x\in \mathbb{R}^{N}$ such that
\begin{equation*}
\lim_{s\rightarrow \infty }\frac{f\left( x,s\right) }{s^{k-1}}=q\left(
x\right) \text{ uniformly in }x\in \mathbb{R}^{N}.
\end{equation*}
\end{itemize}

Under condition $\left( D1\right) ^{\prime }$ and $(\ref{1-1})$, it is
easily seen that for any $2<k<2^{\ast },$ the minimum problem%
\begin{equation}
\overline{\mu }_{1}^{(k)}:=\inf_{u\in H^{1}(\mathbb{R}^{N})}\frac{\left(
\int_{\mathbb{R}^{N}}|\nabla u|^{2}dx\right) ^{k/2}}{\int_{\mathbb{R}%
^{N}}q(x)|u|^{k}dx}\geq \frac{\overline{\nu }_{1}^{(k)}}{c_{\ast }}>0.
\label{1-2}
\end{equation}%
Then we have the following results.

\begin{theorem}
\label{t3}Suppose that $N\geq 3$ and conditions $(V1)-(V2)$ hold. In
addition, for any real number $2<k<2^{\ast },$ we assume that conditions $%
(L1),(D1)^{\prime }$ and $(D2)$ hold. Then for each $0<a<\frac{1}{m_{\infty }%
\overline{\mu }_{1}^{(k)}}$ there exists $\overline{\Lambda }>0$ such that
Eq. $\left( K_{\lambda ,a}\right) $ admits at least one positive solution
for all $\lambda >\overline{\Lambda }.$
\end{theorem}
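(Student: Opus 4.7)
The plan is to obtain a positive solution as a mountain pass critical point of the $C^1$ functional
\begin{equation*}
J_{a,\lambda}(u) = \tfrac{a}{2}\widehat{m}(\|\nabla u\|_2^2) + \tfrac{b}{2}\|\nabla u\|_2^2 + \tfrac{\lambda}{2}\int_{\mathbb{R}^N} V(x)u^2\,dx - \int_{\mathbb{R}^N} F(x,u)\,dx,
\end{equation*}
where $\widehat{m}(t)=\int_0^t m(s)\,ds$ and $F(x,s)=\int_0^s f(x,\tau)\,d\tau$, defined on the Hilbert space $E_\lambda$ of $u\in H^1(\mathbb{R}^N)$ with $\int Vu^2\,dx<\infty$, endowed with $\|u\|_\lambda^2 = \|\nabla u\|_2^2 + \lambda\int V u^2\,dx$. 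By $(V1)$, for $\lambda$ sufficiently large $E_\lambda\hookrightarrow H^1(\mathbb{R}^N)$ continuously.

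The first step is to verify mountain pass geometry. Since $s\mapsto f(x,s)/s^{k-1}$ is nondecreasing on $(0,\infty)$ with limit $q(x)$, condition $(D2)$ yields the pointwise bound $0\le f(x,s)\le q(x)s^{k-1}$ for all $s\ge 0$, hence $F(x,u)\le \tfrac{q(x)}{k}u_+^k$. Combined with $(\ref{1-2})$ this gives $\int F(x,u)\,dx \le \tfrac{1}{k\overline{\mu}_1^{(k)}}\|\nabla u\|_2^k$, so dropping the nonnegative Kirchhoff piece yields $J_{a,\lambda}(u)\ge \tfrac{1}{2}\min(b,1)\|u\|_\lambda^2 - \tfrac{1}{k\overline{\mu}_1^{(k)}}\|u\|_\lambda^k$, giving $J_{a,\lambda}|_{\|u\|_\lambda=\rho}\ge\alpha>0$ for $\rho$ small. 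To manufacture a point with negative energy, I would exploit the strict inequality $am_\infty<1/\overline{\mu}_1^{(k)}$: by the very definition of the infimum $(\ref{1-2})$ one can select $\varphi\in H^1(\mathbb{R}^N)$ with $\int q|\varphi|^k\,dx > am_\infty \|\nabla\varphi\|_2^k$. Condition $(L1)$ gives $t^{-k}\widehat m(t^2\|\nabla\varphi\|_2^2)\to \tfrac{2m_\infty}{k}\|\nabla\varphi\|_2^k$, while $t^{-k}\int F(x,t\varphi)\,dx\to \tfrac{1}{k}\int q|\varphi|^k\,dx$, so $J_{a,\lambda}(t\varphi)\to -\infty$ as $t\to\infty$, producing the desired far point.

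The mountain pass theorem then yields a Palais-Smale sequence $(u_n)\subset E_\lambda$ at level $c\ge\alpha$. For boundedness I would examine $k J_{a,\lambda}(u_n)-\langle J'_{a,\lambda}(u_n),u_n\rangle$: the integral inequality in $(L1)$ with $\sigma=0$ gives $\tfrac{k}{2}\widehat m(s)\ge s\,m(s)$ so the Kirchhoff contribution is nonnegative, while $(D2)$ provides $k F(x,s)\le s f(x,s)$ for $s\ge 0$; the surviving terms are $\tfrac{(k-2)b}{2}\|\nabla u_n\|_2^2 + \tfrac{(k-2)\lambda}{2}\int V u_n^2\,dx$, so $\|u_n\|_\lambda$ is uniformly bounded. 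Passing to a weak limit $u_n\rightharpoonup u_0$, the steep-potential-well compactness used in \cite{SW1} (valid for $\lambda$ large) gives $u_n\to u_0$ strongly in $L^p(\mathbb{R}^N)$ for all $2\le p<2^*$; combined with the bound $0\le f(x,s)\le c_*|x|^{k(N-2)/2-N}s_+^{k-1}$ from $(D1)'$ and the Caffarelli-Kohn-Nirenberg inequality, the nonlinear terms pass to the limit.

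The step I expect to be the main obstacle is recovering strong convergence of the gradients so that the nonlocal coefficient $m(\|\nabla u_n\|_2^2)$ converges to $m(\|\nabla u_0\|_2^2)$; without this one cannot conclude $J'_{a,\lambda}(u_0)=0$. This is where the threshold $a<1/(m_\infty\overline{\mu}_1^{(k)})$ is essential: it quantitatively prevents a possible concentrating bubble modeled on an approximate extremal of $\overline{\mu}_1^{(k)}$ from carrying away a strictly positive amount of gradient norm while still being compatible with $J'_{a,\lambda}(u_n)\to 0$. Testing $\langle J'_{a,\lambda}(u_n),u_n-u_0\rangle\to 0$ against the strong $L^p$ convergence just established forces $\|\nabla u_n\|_2\to \|\nabla u_0\|_2$, whence $u_n\to u_0$ in $E_\lambda$ and $u_0$ is a nontrivial critical point. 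Finally, positivity is obtained by testing $\langle J'_{a,\lambda}(u_0),u_0^-\rangle=0$ with $u_0^-=\min\{u_0,0\}$ and using $f(x,s)\equiv 0$ for $s<0$ together with $M>0$, followed by the strong maximum principle.
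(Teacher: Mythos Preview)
Your overall architecture coincides with the paper's: mountain pass geometry via Lemma~\ref{lem2} and Lemma~\ref{lem9}, boundedness of the Cerami sequence from the combination $J_{a,\lambda}(u_n)-\tfrac{1}{k}\langle J_{a,\lambda}'(u_n),u_n\rangle$ using the integral inequality in $(L1)$ (this is exactly Lemma~\ref{lem5}), and then a compactness argument valid for $\lambda$ large. The geometry and boundedness parts are correct and essentially identical to the paper.

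The gap is in your compactness step. The steep potential well hypotheses $(V1)$--$(V2)$ do \emph{not} yield strong convergence $u_n\to u_0$ in $L^p(\mathbb{R}^N)$ for a bounded sequence in $X_\lambda$; neither \cite{SW1} nor the present paper claims this, and for any fixed $\lambda$ the embedding $X_\lambda\hookrightarrow L^p(\mathbb{R}^N)$ is not compact (the set $\{V<c_0\}$ has finite measure but need not be bounded, and $V$ need not tend to infinity). What the paper actually does (Proposition~\ref{m1}, modeled on Proposition~\ref{m2}) is a Brezis--Lieb splitting $v_n=u_n-u_0$ together with the quantitative estimate $\int_{\mathbb{R}^N}|v_n|^k\,dx\le \Pi_{\lambda,k}\|v_n\|_\lambda^k+o(1)$, where $\Pi_{\lambda,k}\to 0$ as $\lambda\to\infty$. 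To exploit this one first needs an a priori bound on $\|v_n\|_\lambda$ that is uniform in $n$; this is obtained from $J_{a,\lambda}(u_n)-J_{a,\lambda}(u_0)$ using once more the inequality in $(L1)$ (now with $\sigma=\|u_0\|_{D^{1,2}}^2$, $\eta=\|u_n\|_{D^{1,2}}^2$) and an independent lower bound $J_{a,\lambda}(u_0)\ge K$. Only then does choosing $\lambda$ large enough force $v_n\to 0$. Your sketch, which jumps to ``$u_n\to u_0$ in $L^p$'' and then tests with $u_n-u_0$, short-circuits precisely this mechanism and does not explain where the threshold $\overline{\Lambda}$ comes from.

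A second, related point: you say the restriction $a<1/(m_\infty\overline{\mu}_1^{(k)})$ is ``essential'' in the compactness step to rule out concentration. In the paper it plays no role there; it is used \emph{only} to produce the far point with negative energy (Lemma~\ref{lem9}). The compactness argument (Lemma~\ref{lem5} and Proposition~\ref{m1}) works for every $a>0$ once $\lambda$ is large enough, with $\overline{\Lambda}$ depending on the uniform upper bound $D_a$ for the mountain pass level.
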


\begin{theorem}
\label{t4}Suppose that $N\geq 3$ and conditions $(V1)-(V2),(L2)$ hold. In
addition, for any real number $2<k<2^{\ast },$ we assume that conditions $%
(L3)$ with $\delta >\frac{k-2}{2},\left( D1\right) ^{\prime }$ and $(D2)$
hold. Then there exists constants $\overline{a}_{\ast },\overline{\Lambda }%
_{\ast }>0$ such that for every $0<a<\overline{a}_{\ast }$ and $\lambda >%
\overline{\Lambda }_{\ast },$ Eq. $(K_{a,\lambda })$ admits at least two
positive solutions $u_{a,\lambda }^{-}$ and $u_{a,\lambda }^{+}$ satisfying $%
J_{a,\lambda }(u_{a,\lambda }^{-})<0<J_{a,\lambda }(u_{a,\lambda }^{+})$ $.$
In particular, $u_{a,\lambda }^{-}$ is a ground state solution of Eq. $%
(K_{a,\lambda }).$
\end{theorem}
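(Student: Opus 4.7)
My plan is to follow the standard global-minimum plus mountain-pass scheme, with coercivity of $J_{a,\lambda}$ supplied by the strong growth condition $\delta>(k-2)/2$ in $(L3)$ and compactness by the steep well $(V1)$-$(V2)$ once $\lambda$ is large. Introduce the Hilbert space $E_\lambda=\{u\in H^1(\mathbb{R}^N):\int V u^2\,dx<\infty\}$ with norm $\|u\|_\lambda^2=\int|\nabla u|^2\,dx+\lambda\int V u^2\,dx$, so that critical points of
\[
J_{a,\lambda}(u)=\tfrac{1}{2}\widehat{M}(\|\nabla u\|_2^2)+\tfrac{\lambda}{2}\int V u^2\,dx-\int F(x,u)\,dx,\qquad \widehat{M}(s)=\int_0^s M(t)\,dt,
\]
coincide with weak solutions of $(K_{a,\lambda})$. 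Under $(V1)$-$(V2)$, the embedding $E_\lambda\hookrightarrow L^p(\mathbb{R}^N)$ is compact for every $2\le p<2^\ast$ once $\lambda$ exceeds some threshold $\overline{\Lambda}_\ast$. Combining $(L2)$-$(L3)$ (which give $\widehat{M}(s)\ge\frac{am_0}{\delta+1}s^{\delta+1}-C_0$ for $s$ large) with the bound $\int q|u|^k\,dx\le\frac{1}{\overline{\mu}_1^{(k)}}\|\nabla u\|_2^k$ from (\ref{1-2}) yields the basic coercivity estimate
\[
J_{a,\lambda}(u)\ge \tfrac{am_0}{2(\delta+1)}\|\nabla u\|_2^{2(\delta+1)}+\tfrac{b}{2}\|\nabla u\|_2^2+\tfrac{\lambda}{2}\int V u^2\,dx-C_1\|\nabla u\|_2^k-C_2,
\]
so that the hypothesis $2(\delta+1)>k$ makes $J_{a,\lambda}$ coercive and bounded below on $E_\lambda$ for every $a,\lambda>0$.

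I would next check the two geometric conditions. Near the origin the elementary bound $J_{a,\lambda}(u)\ge\tfrac{1}{2}\|u\|_\lambda^2-C\|u\|_\lambda^k$ produces a sphere $\partial B_{\rho_0}\subset E_\lambda$ on which $J_{a,\lambda}\ge\alpha_0>0$, with $\rho_0,\alpha_0$ independent of $a$ and $\lambda$. For the negative part, fix $\varphi\in C_c^\infty(\Omega)$ with $\int q\varphi^k\,dx>0$, which exists by $(D1)^\prime$ and $q\not\equiv 0$ on $\overline{\Omega}$. Since $V\equiv 0$ on $\Omega$,
\[
J_{a,\lambda}(t\varphi)=\tfrac{a}{2}\int_0^{t^2\|\nabla\varphi\|_2^2}\!\! m(s)\,ds+\tfrac{bt^2}{2}\|\nabla\varphi\|_2^2-\int F(x,t\varphi)\,dx,
\]
and since $\int F(x,t\varphi)\,dx\sim\frac{t^k}{k}\int q\varphi^k\,dx$ as $t\to\infty$ with $k>2$, one can pick $t_0>0$ so that $\tfrac{bt_0^2}{2}\|\nabla\varphi\|_2^2<\int F(x,t_0\varphi)\,dx$ and then a smallness threshold $\overline{a}_\ast>0$ (depending only on $t_0,\varphi,b$ and the profile of $m$ on $[0,t_0^2\|\nabla\varphi\|_2^2]$) such that $J_{a,\lambda}(t_0\varphi)<0$ for every $0<a<\overline{a}_\ast$. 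Coercivity then forces $c_0:=\inf_{E_\lambda}J_{a,\lambda}\le J_{a,\lambda}(t_0\varphi)<0<\alpha_0$, so any minimizer necessarily lies strictly outside $\overline{B_{\rho_0}}$.

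The last ingredient is a Palais-Smale condition at every level. Boundedness of any $(PS)_c$ sequence follows from coercivity; extracting $u_n\rightharpoonup u_0$ in $E_\lambda$ and $u_n\to u_0$ in $L^p(\mathbb{R}^N)$ for $2\le p<2^\ast$ from the compact embedding, together with $(D1)^\prime$, handles the $f$-term. For the nonlocal coefficient I would write $\langle J'_{a,\lambda}(u_n),u_n-u_0\rangle\to 0$, combine it with $\int\nabla u_n\cdot\nabla(u_n-u_0)\,dx\to t_\infty-\|\nabla u_0\|_2^2$, where $t_\infty:=\lim\|\nabla u_n\|_2^2\ge\|\nabla u_0\|_2^2$, and invoke the uniform lower bound $M\ge b>0$ (from $M=am+b$ with $m\ge 0$) to conclude $t_\infty=\|\nabla u_0\|_2^2$, hence $u_n\to u_0$ strongly in $E_\lambda$. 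Thus $c_0$ is attained at some $u^-\in E_\lambda\setminus\{0\}$, automatically a ground-state critical point with $J_{a,\lambda}(u^-)=c_0<0$. The mountain-pass theorem applied to paths $\gamma\in C([0,1],E_\lambda)$ with $\gamma(0)=0$ and $\gamma(1)=u^-$ then yields a second critical point $u^+$ with $J_{a,\lambda}(u^+)=c_{\mathrm{MP}}\ge\alpha_0>0$. Positivity of $u^\pm$ follows from $f(x,s)\equiv 0$ for $s<0$ and the strong maximum principle.

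The main obstacle I foresee is the strong-convergence step for the nonlocal coefficient: weak convergence $u_n\rightharpoonup u_0$ only gives $t_\infty\ge\|\nabla u_0\|_2^2$, and a priori $M(t_\infty)\ne M(\|\nabla u_0\|_2^2)$, so the natural limit equation for $u_0$ is not automatic; the strict positivity $M\ge b$ inherited from the additive splitting $M=am+b$ is what closes the gap, with $(D2)$ available in reserve to control the fiber-map structure if a Nehari-type reformulation is needed. A secondary subtlety is the joint calibration of $\overline{a}_\ast$ and $\overline{\Lambda}_\ast$: the former must be small enough that $J_{a,\lambda}(t_0\varphi)<0$, the latter large enough that the compact embedding and hence the PS condition hold.
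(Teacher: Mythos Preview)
Your overall architecture---coercivity from $(L3)$ with $\delta>(k-2)/2$, a negative-energy test function supported in $\Omega$, and mountain pass for the second solution---matches the paper's scheme (Lemmas \ref{lem4}, \ref{lem11}, \ref{lem12}, Theorems \ref{t12}, \ref{t5}).  Your choice of $\varphi\in C_c^\infty(\Omega)$ so that the $\lambda V$ term drops out is in fact cleaner than the paper's Lemma \ref{lem11}, since it makes $\overline{a}_\ast$ manifestly independent of $\lambda$.

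There is, however, a genuine gap in your compactness step.  The assertion that ``the embedding $E_\lambda\hookrightarrow L^p(\mathbb{R}^N)$ is compact for every $2\le p<2^\ast$ once $\lambda$ exceeds some threshold'' is \emph{false} under $(V1)$--$(V2)$ alone.  Condition $(V1)$ only says that $\{V<c_0\}$ has finite measure for \emph{one} value $c_0$; it does not force $V(x)\to\infty$ as $|x|\to\infty$.  For instance, $V$ could equal $c_0$ identically outside a large ball, in which case $\|\cdot\|_\lambda$ is equivalent to $\|\cdot\|_{H^1}$ for every fixed $\lambda$, and the embedding into $L^p(\mathbb{R}^N)$ is noncompact.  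Consequently your argument that $\int f(x,u_n)(u_n-u_0)\,dx\to 0$ by compact embedding does not go through, and the strong-convergence step collapses.

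What the paper does instead (Propositions \ref{m2}--\ref{m4}) is a \emph{level-dependent} compactness: for each ceiling $D>0$ one finds $\overline{\Lambda}_\ast(D)$ so that the $(C)_\alpha$ condition holds for all $\alpha<D$ once $\lambda>\overline{\Lambda}_\ast(D)$.  The mechanism is quantitative: writing $v_n=u_n-u_0$, one shows
\[
\int_{\mathbb{R}^N}|v_n|^k\,dx\le \Pi_{\lambda,k}\,\|v_n\|_\lambda^k+o(1),\qquad \Pi_{\lambda,k}\to 0\ \text{as}\ \lambda\to\infty,
\]
and combines this with an a priori bound $\|v_n\|_\lambda^2\le C(D)$ coming from the energy ceiling.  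The threshold $\overline{\Lambda}_\ast$ must then be chosen large enough that $\Pi_{\lambda,k}\,C(D)^{k/2}$ is absorbed by $\min\{1,b\}\|v_n\|_\lambda^2$.  Since both the mountain-pass level and the global minimum lie below the $\lambda$-independent bound $D_a$, a single $\overline{\Lambda}_\ast$ suffices for both solutions.  Your coercivity argument already supplies boundedness of $(PS)$ sequences, so only this refined tail estimate is missing.
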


\begin{remark}
\label{ex3} Unlike Theorem \ref{t2}, in Theorem \ref{t4} we only require the
power $\delta $ of $m(t)$ is greater than $\frac{k-2}{2},$ also leading to
two positive solutions of Eq. $(K_{a,\lambda })$ under condition $\left(
D1\right) ^{\prime }.$
\end{remark}

The remainder of this paper is organized as follows. After giving some
preliminaries in Section 2, we prove that $J_{\lambda ,a}$ satisfies the
mountain pass geometry in Section 3. In Sections 4--6, we give the proofs of
Theorems \ref{t1}--\ref{t4}.

\section{Preliminaries}

Let%
\begin{equation*}
X=\left\{ u\in H^{1}(\mathbb{R}^{N})\ |\ \int_{\mathbb{R}^{N}}V\left(
x\right) u^{2}dx<\infty \right\}
\end{equation*}%
be equipped with the inner product and norm%
\begin{equation*}
\left\langle u,v\right\rangle =\int_{\mathbb{R}^{N}}\left( \nabla u\nabla
v+V(x)uv\right) dx,\ \left\Vert u\right\Vert =\left\langle u,u\right\rangle
^{1/2}.
\end{equation*}%
For $\lambda >0,$ we also need the following inner product and norm%
\begin{equation*}
\left\langle u,v\right\rangle _{\lambda }=\int_{\mathbb{R}^{N}}\left( \nabla
u\nabla v+\lambda V(x)uv\right) dx,\ \left\Vert u\right\Vert _{\lambda
}=\left\langle u,u\right\rangle _{\lambda }^{1/2}.
\end{equation*}%
Clearly, $\left\Vert u\right\Vert \leq \left\Vert u\right\Vert _{\lambda }$
for $\lambda \geq 1.$ Now we set $X_{\lambda }=( X,\left\Vert u\right\Vert
_{\lambda }).$

For $N=1,2,$ applying conditions $(V1)-(V2)$, the H\"{o}lder, Young and
Gagliardo-Nirenberg inequalities, there exists a sharp constant $A_{N}>0$
such that%
\begin{eqnarray*}
\int_{\mathbb{R}^{N}}u^{2}dx &\leq &\frac{1}{c_{0}}\int_{\left\{ V\geq
c_{0}\right\} }V\left( x\right) u^{2}dx+\left( \left\vert \left\{
V<c_{0}\right\} \right\vert \int_{\mathbb{R}^{N}}u^{4}dx\right) ^{1/2} \\
&\leq &\frac{1}{c_{0}}\int_{\mathbb{R}^{N}}V\left( x\right)
u^{2}dx+A_{N}^{2}\left\vert \left\{ V<c_{0}\right\} \right\vert
^{1/2}\left\Vert u\right\Vert _{D^{1,2}}^{N/2}\left\Vert u\right\Vert
_{L^{2}}^{\left( 4-N\right) /2} \\
&\leq &\frac{1}{c_{0}}\int_{\mathbb{R}^{N}}V\left( x\right) u^{2}dx+\frac{%
NA_{N}^{8/N}\left\vert \left\{ V<c_{0}\right\} \right\vert ^{2/N}\left\Vert
u\right\Vert _{D^{1,2}}^{2}}{4}+\frac{\left( 4-N\right) \left\Vert
u\right\Vert _{L^{2}}^{2}}{4},
\end{eqnarray*}%
which shows that%
\begin{equation*}
\int_{\mathbb{R}^{N}}u^{2}dx\leq \frac{4}{Nc_{0}}\int_{\mathbb{R}%
^{N}}V\left( x\right) u^{2}dx+A_{N}^{8/N}\left\vert \left\{ V<c_{0}\right\}
\right\vert ^{2/N}\left\Vert u\right\Vert _{D^{1,2}}^{2}.
\end{equation*}%
This implies that%
\begin{equation}
\left\Vert u\right\Vert _{H^{1}}^{2}\leq \max \left\{
1+A_{N}^{8/N}\left\vert \left\{ V<c_{0}\right\} \right\vert ^{2/N},\frac{4}{%
Nc_{0}}\right\} \left\Vert u\right\Vert ^{2}.  \label{39}
\end{equation}%
Similarly, we also have
\begin{equation}
\left\Vert u\right\Vert _{H^{1}}^{2}\leq \left( 1+A_{N}^{8/N}\left\vert
\left\{ V<c_{0}\right\} \right\vert ^{2/N}\right) \left\Vert u\right\Vert
_{\lambda }^{2}\text{ for }\lambda \geq \frac{4}{Nc_{0}}\left(
1+A_{N}^{8/N}\left\vert \left\{ V<c_{0}\right\} \right\vert ^{2/N}\right)
^{-1}.  \label{44}
\end{equation}%
For $N\geq 3,$ following \cite{SW1}, we have
\begin{equation}
\left\Vert u\right\Vert _{H^{1}}^{2}\leq \max \left\{ 1+S^{2}\left\vert
\left\{ V<c_{0}\right\} \right\vert ^{2/N},c_{0}^{-1}\right\} \left\Vert
u\right\Vert ^{2}  \label{41}
\end{equation}%
and
\begin{equation*}
\left\Vert u\right\Vert _{H^{1}}^{2}\leq \left( 1+S^{2}\left\vert \left\{
V<c_{0}\right\} \right\vert ^{2/N}\right) \left\Vert u\right\Vert _{\lambda
}^{2}\text{ for }\lambda \geq c_{0}^{-1}\left( 1+S^{2}\left\vert \left\{
V<c_{0}\right\} \right\vert ^{2/N}\right) ,  \label{45}
\end{equation*}%
where $S$ is the best constant for the embedding of $D^{1,2}(\mathbb{R}^{N})$
in $L^{2^{\ast }}(\mathbb{R}^{N})$. Set%
\begin{equation*}
\alpha _{N}:=\left\{
\begin{array}{ll}
\max \left\{ 1+A_{N}^{8/N}\left\vert \left\{ V<c_{0}\right\} \right\vert
^{2/N},\frac{4}{Nc_{0}}\right\}  & \text{ for }N=1,2; \\
\max \left\{ 1+S^{2}\left\vert \left\{ V<c_{0}\right\} \right\vert
^{2/N},c_{0}^{-1}\right\}  & \text{ for }N\geq 3.%
\end{array}%
\right.
\end{equation*}%
Thus, it follows from (\ref{39}) and (\ref{41}) that
\begin{equation*}
\left\Vert u\right\Vert _{H^{1}}^{2}\leq \alpha _{N}\left\Vert u\right\Vert
^{2},  \label{9}
\end{equation*}%
which implies that the imbedding $X\hookrightarrow H^{1}(\mathbb{R}^{N})$ is
continuous.

Since the imbedding $H^{1}(\mathbb{R}^{N})\hookrightarrow L^{r}(\mathbb{R}%
^{N})$ $(2\leq r<+\infty )$ is continuous for $N=1,2$, from $(\ref{44})$ it
follows that for any $r\in \lbrack 2,+\infty )$,%
\begin{equation*}
\int_{\mathbb{R}^{N}}\left\vert u\right\vert ^{r}dx\leq S_{r}^{-r}\left\Vert
u\right\Vert _{H^{1}}^{r}\leq S_{r}^{-r}\left( 1+A_{N}^{8/N}\left\vert
\left\{ V<c_{0}\right\} \right\vert ^{2/N}\right) ^{r/2}\left\Vert
u\right\Vert _{\lambda }^{r}
\end{equation*}%
for $\lambda \geq \frac{4}{Nc_{0}}\left( 1+A_{N}^{8/N}\left\vert \left\{
V<c_{0}\right\} \right\vert ^{2/N}\right) ^{-1}$, where $S_{r}$ is the best
Sobolev constant for the imbedding of $H^{1}(\mathbb{R}^{N})$ in $L^{r}(%
\mathbb{R}^{N})$ ($2\leq r<+\infty $). For $N\geq 3,$ following the argument
in \cite{SW1} (see pp 1776-1777), for any $r\in \lbrack 2,2^{\ast })$ one has%
\begin{eqnarray}
&&\int_{\mathbb{R}^{N}}\left\vert u\right\vert ^{r}dx  \notag \\
&\leq &\left( \int_{\left\{ V\geq c_{0}\right\} }u^{2}dx+\int_{\left\{
V<c_{0}\right\} }u^{2}dx\right) ^{\frac{2^{\ast }-r}{2^{\ast }-2}}\left(
S^{-1}\left\Vert u\right\Vert _{D^{1,2}}\right) ^{\frac{2^{\ast }(r-2)}{%
2^{\ast }-2}}  \notag \\
&\leq &\left( \frac{1}{\lambda c_{0}}\int_{\mathbb{R}^{N}}\lambda
V(x)u^{2}dx+S^{-2}\left\vert \left\{ V<c_{0}\right\} \right\vert
^{2/N}\left\Vert u\right\Vert _{D^{1,2}}^{2}\right) ^{\frac{2^{\ast }-r}{%
2^{\ast }-2}}\left( S^{-1}\left\Vert u\right\Vert _{D^{1,2}}\right) ^{\frac{%
2^{\ast }(r-2)}{2^{\ast }-2}}  \label{10} \\
&\leq &\left\vert \left\{ V<c_{0}\right\} \right\vert ^{\left( 2^{\ast
}-r\right) /2^{\ast }}S^{-r}\left\Vert u\right\Vert _{\lambda }^{r}\text{
for }\lambda \geq c_{0}^{-1}S^{2}\left\vert \left\{ V<c_{0}\right\}
\right\vert ^{-2/N}.  \notag
\end{eqnarray}%
Now we set%
\begin{equation}
\Theta _{r,N}:=\left\{
\begin{array}{ll}
S_{r}^{-r}\left( 1+A_{N}^{8/N}\left\vert \left\{ V<c_{0}\right\} \right\vert
^{2/N}\right) ^{r/2} & \text{ if }N=1,2; \\
S^{-r}\left\vert \left\{ V<c_{0}\right\} \right\vert ^{\frac{2^{\ast }-r}{%
2^{\ast }}} & \text{ if }N\geq 3,%
\end{array}%
\right.   \label{7}
\end{equation}%
and
\begin{equation}
\Lambda _{N}:=\left\{
\begin{array}{ll}
\frac{4}{Nc_{0}}\left( 1+A_{N}^{8/N}\left\vert \left\{ V<c_{0}\right\}
\right\vert ^{2/N}\right) ^{-1} & \text{ if }N=1,2; \\
c_{0}^{-1}S^{2}\left\vert \left\{ V<c_{0}\right\} \right\vert ^{-2/N} &
\text{ if }N\geq 3.%
\end{array}%
\right.   \label{8}
\end{equation}%
Thus, by (\ref{7})--(\ref{8}) we have for any $r\in \lbrack 2,2^{\ast })$
and $\lambda \geq \Lambda _{N},$
\begin{equation}
\int_{\mathbb{R}^{N}}\left\vert u\right\vert ^{r}dx\leq \Theta
_{r,N}\left\Vert u\right\Vert _{\lambda }^{r}.  \label{11}
\end{equation}

Eq. $(K_{\lambda ,a})$ is variational and its solutions are the critical
points of the functional defined in $X$ by
\begin{equation}
J_{\lambda ,a}\left( u\right) =\frac{a}{2}\widehat{m}\left( \left\Vert
u\right\Vert _{D^{1,2}}^{2}\right) +\frac{1}{2}\left( b\left\Vert
u\right\Vert _{D^{1,2}}^{2}+\int_{\mathbb{R}^{N}}\lambda V(x)u^{2}dx\right)
-\int_{\mathbb{R}^{N}}F(x,u)dx,  \label{12}
\end{equation}%
where $\widehat{m}(t)=\int_{0}^{t}m(s)ds$ and $F(x,u)=\int_{0}^{u}f(x,s)ds$.
Furthermore, it is not difficult to prove that the functional $J_{\lambda
,a} $ is of class $C^{1}$ in $X$, and that
\begin{eqnarray}
\langle J_{\lambda ,a}^{\prime }(u),v\rangle &=&\left( am\left( \left\Vert
u\right\Vert _{D^{1,2}}^{2}\right) +b\right) \int_{\mathbb{R}^{N}}\nabla
u\nabla vdx+\int_{\mathbb{R}^{N}}\lambda V(x)uvdx  \notag \\
&&-\int_{\mathbb{R}^{N}}f(x,u)vdx.  \label{13}
\end{eqnarray}

The following theorem is a variant version of the mountain pass theorem,
which helps us find a so-called Cerami type $(PS)$-sequence.

\begin{theorem}
\label{t7}(\cite{E2}, Mountain Pass Theorem) Let $E$ be a real Banach space
with its dual space $E^{\ast },$ and suppose that $I\in C^{1}(E,\mathbb{R})$
satisfies
\begin{equation*}
\max \{I(0),I(e)\}\leq \mu <\eta \leq \inf_{\Vert u\Vert =\rho }I(u),
\end{equation*}%
for some $\mu <\eta ,\rho >0$ and $e\in E$ with $\Vert e\Vert >\rho .$ Let $%
\alpha \geq \eta $ be characterized by
\begin{equation*}
\alpha =\inf_{\gamma \in \Gamma }\max_{0\leq \tau \leq 1}I(\gamma (\tau )),
\end{equation*}%
where $\Gamma =\{\gamma \in C([0,1],E):\gamma (0)=0,\gamma (1)=e\}$ is the
set of continuous paths joining $0$ and $e$. Then there exists a sequence $%
\{u_{n}\}\subset E$ such that
\begin{equation*}
I(u_{n})\rightarrow \alpha \geq \eta \quad \text{and}\quad (1+\Vert
u_{n}\Vert )\Vert I^{\prime }(u_{n})\Vert _{E^{\ast }}\rightarrow 0,\quad
\text{as}\ n\rightarrow \infty .
\end{equation*}
\end{theorem}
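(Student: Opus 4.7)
The plan is to prove Theorem~\ref{t7} by a quantitative deformation argument. Since this is a Cerami-type variant of the classical mountain pass theorem, the key modification is to use a pseudo-gradient flow that is reweighted by the factor $1+\|u\|$.

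First I would argue by contradiction. Assuming no such sequence exists, there exist $\varepsilon_{0}>0$ and a neighborhood $U=I^{-1}([\alpha-2\varepsilon_{0},\alpha+2\varepsilon_{0}])$ on which $(1+\|u\|)\|I'(u)\|_{E^{\ast}}\geq 8\varepsilon_{0}$. After shrinking $\varepsilon_{0}$ so that $\mu<\alpha-2\varepsilon_{0}$, the points $0$ and $e$ lie outside $U$, in the safe region where no deformation is needed.

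Next I would build a locally Lipschitz pseudo-gradient vector field $W$ on the regular set $\widetilde{E}=\{u\in E:I'(u)\neq 0\}$, satisfying $\|W(u)\|_{E}\leq 2\|I'(u)\|_{E^{\ast}}$ and $\langle I'(u),W(u)\rangle\geq \|I'(u)\|_{E^{\ast}}^{2}$. I would then rescale it to
\[
V(u)=-\chi(u)\,\frac{(1+\|u\|)\,W(u)}{\|W(u)\|},
\]
where $\chi$ is a Urysohn cutoff that equals $1$ on the smaller strip $I^{-1}([\alpha-\varepsilon_{0},\alpha+\varepsilon_{0}])$ and vanishes outside $U$. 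The field $V$ has norm bounded by $1+\|u\|$, so the ODE $\dot\eta=V(\eta)$ admits a global-in-time flow, with $\|\eta(t,u)\|$ controlled by Gronwall's inequality. On the inner strip one has $\langle I'(u),V(u)\rangle\leq -\tfrac{1}{2}(1+\|u\|)\|I'(u)\|_{E^{\ast}}\leq -4\varepsilon_{0}$, so $I$ decreases at a uniform rate along the flow there.

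Finally, selecting an almost-minimizing path $\gamma\in\Gamma$ with $\max_{\tau\in[0,1]}I(\gamma(\tau))\leq \alpha+\varepsilon_{0}$ and applying $\eta(T,\cdot)$ for sufficiently large $T$ would produce a deformed path $\widetilde\gamma\in\Gamma$ (the endpoints are untouched since $\chi$ vanishes near $0$ and $e$) with $\max_{\tau\in[0,1]}I(\widetilde\gamma(\tau))<\alpha$, contradicting the definition of $\alpha$. The main obstacle is the design of $V$: the weight $1+\|u\|$ is exactly what forces the Cerami conclusion $(1+\|u_{n}\|)\|I'(u_{n})\|_{E^{\ast}}\to 0$, yet it is tame enough that Gronwall's inequality still delivers a globally defined flow. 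The standard Palais--Smale MPT proof omits this weighting and yields only $\|I'(u_{n})\|_{E^{\ast}}\to 0$; obtaining the stronger Cerami conclusion is the whole point of the refinement.
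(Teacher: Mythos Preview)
Your proof sketch is a correct outline of the standard quantitative deformation argument for the Cerami-type mountain pass theorem: the reweighted pseudo-gradient field $V(u)=-\chi(u)(1+\|u\|)W(u)/\|W(u)\|$ is precisely what is needed, Gronwall's inequality guarantees global existence of the flow despite the linear growth in $\|u\|$, and the uniform decrease rate on the inner strip then yields the contradiction with the minimax value $\alpha$. There is no genuine gap.

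However, there is nothing to compare against: the paper does not prove Theorem~\ref{t7}. It is stated as a quoted result from \cite{E2} (Ekeland, \emph{Convexity Methods in Hamiltonian Mechanics}) and used as a black box to produce the Cerami sequence in \eqref{3.5} and \eqref{3.9}. So your deformation argument is not an alternative to the paper's proof but rather a proof of a result the paper simply cites from the literature.
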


\section{Mountain pass geometry}

In this section we prove that the energy functional $J_{\lambda ,a}$
satisfies the mountain pass geometry under the different assumptions on $m$
and $f$, respectively.

\begin{lemma}
\label{lem1}Suppose that conditions $(V1)-(V2)$ and $(D1)-(D2)$ hold. Then
there exist $\rho >0$ and $\eta >0$ such that $\inf \{J_{\lambda ,a}(u):u\in
X_{\lambda }\ $with$\ \left\Vert u\right\Vert _{\lambda }=\rho \}>\eta $ for
all $\lambda \geq \Lambda _{N}.$
\end{lemma}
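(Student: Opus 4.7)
The plan is to establish mountain-pass geometry near the origin of $X_{\lambda}$ by showing that the quadratic part of $J_{\lambda,a}$ dominates the nonlinear part on a small sphere, uniformly in $\lambda \geq \Lambda_N$. The key inputs will be the monotonicity condition $(D2)$ combined with the asymptotic condition $(D1)$ to convert $F$ into a pure power bound, together with the embedding estimate \eqref{11}. The positivity of $m$ on $\mathbb{R}^+$ will allow me to discard the Kirchhoff contribution cleanly.

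First, I would extract the power-type upper bound on $F$. Since $(D2)$ makes $s \mapsto f(x,s)/s^{k-1}$ nondecreasing on $(0,\infty)$ and $(D1)$ identifies its limit at infinity as $q(x)$, the supremum is $q(x)$, so $f(x,s)\leq q(x)\,s^{k-1}$ for every $s>0$. Together with $f(x,s)\equiv 0$ for $s<0$, integration gives
\[
F(x,u)\leq \frac{\|q\|_{\infty}}{k}\,|u|^{k}\qquad\text{for all }(x,u)\in\mathbb{R}^{N}\times\mathbb{R}.
\]

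Next, since $2<k<2^{\ast}$ and $\lambda\geq \Lambda_{N}$, the embedding estimate \eqref{11} with $r=k$ yields $\int_{\mathbb{R}^{N}}F(x,u)\,dx\leq \frac{\|q\|_{\infty}\Theta_{k,N}}{k}\|u\|_{\lambda}^{k}$. On the other hand, $\widehat{m}(t)=\int_{0}^{t}m(s)\,ds\geq 0$ because $m>0$, so the Kirchhoff term in \eqref{12} can be dropped, and the elementary inequality $b\|u\|_{D^{1,2}}^{2}+\int_{\mathbb{R}^{N}}\lambda V(x)u^{2}\,dx\geq \min(b,1)\,\|u\|_{\lambda}^{2}$ produces
\[
J_{\lambda,a}(u)\;\geq\;\frac{\min(b,1)}{2}\,\|u\|_{\lambda}^{2}\;-\;\frac{\|q\|_{\infty}\,\Theta_{k,N}}{k}\,\|u\|_{\lambda}^{k}.
\]

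Finally, since $k>2$, the right-hand side, viewed as a function of $\rho=\|u\|_{\lambda}$, has the form $c_{1}\rho^{2}-c_{2}\rho^{k}$ with $c_{1},c_{2}>0$ depending only on $b,k,\|q\|_{\infty},\Theta_{k,N}$; it is strictly positive for all sufficiently small $\rho>0$. Fixing such a $\rho$ and setting $\eta$ equal to half of the resulting positive value gives $\inf\{J_{\lambda,a}(u):\|u\|_{\lambda}=\rho\}\geq \eta>0$. There is no genuinely hard step; the only point requiring care is confirming that both constants $\Theta_{k,N}$ and $\min(b,1)$ are $\lambda$-independent, so the choice of $\rho$ and $\eta$ is uniform for all $\lambda\geq \Lambda_{N}$.
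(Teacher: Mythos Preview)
Your proof is correct and follows essentially the same route as the paper: both derive the pointwise bound $F(x,s)\le \frac{1}{k}q(x)s^{k}$ from $(D1)$--$(D2)$, apply the embedding estimate \eqref{11} to obtain $\int_{\mathbb{R}^{N}}F(x,u)\,dx\le \frac{|q|_{\infty}\Theta_{k,N}}{k}\|u\|_{\lambda}^{k}$, drop the nonnegative Kirchhoff term, and conclude from the resulting inequality $J_{\lambda,a}(u)\ge \tfrac{1}{2}\min\{b,1\}\|u\|_{\lambda}^{2}-\tfrac{|q|_{\infty}\Theta_{k,N}}{k}\|u\|_{\lambda}^{k}$. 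Your added remark that $\Theta_{k,N}$ and $\min\{b,1\}$ are independent of $\lambda$, ensuring the uniformity of $\rho,\eta$, is a welcome clarification.
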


\begin{proof}
By conditions $(D1)-(D2)$, we obtain that%
\begin{equation}
f(x,s)\leq q(x)s^{k-1}\quad \text{for all}\ s\geq 0  \label{3.1}
\end{equation}%
and
\begin{equation}
F(x,s)\leq \frac{1}{k}q(x)s^{k}\quad \text{for all}\ s\geq 0.  \label{3.2}
\end{equation}%
Then, by (\ref{11}) and (\ref{3.2}), for every $u\in X$ and $\lambda \geq
\Lambda _{N}$ one has
\begin{equation*}
\int_{\mathbb{R}^{N}}F(x,u)dx\leq \frac{|q|_{\infty }}{k}\int_{\mathbb{R}%
^{N}}|u|^{k}dx\leq \frac{|q|_{\infty }\Theta _{k,N}}{k}\Vert u\Vert
_{\lambda }^{k}.
\end{equation*}%
This implies that
\begin{eqnarray*}
J_{\lambda ,a}(u) &=&\frac{a}{2}\widehat{m}\left( \left\Vert u\right\Vert
_{D^{1,2}}^{2}\right) +\frac{1}{2}\left( b\int_{\mathbb{R}^{N}}\left\vert
\nabla u\right\vert ^{2}dx+\int_{\mathbb{R}^{N}}\lambda V(x)u^{2}dx\right)
-\int_{\mathbb{R}^{N}}F(x,u)dx \\
&\geq &\frac{1}{2}\min \left\{ b,1\right\} \Vert u\Vert _{\lambda }^{2}-%
\frac{|q|_{\infty }\Theta _{k,N}}{k}\Vert u\Vert _{\lambda }^{k}.
\end{eqnarray*}%
Thus, letting $\left\Vert u\right\Vert _{\lambda }=\rho >0$ small enough, it
is easy to obtain that there exists $\eta >0$ such that $\inf \{J_{\lambda
,a}(u):u\in X_{\lambda }\ $with$\ \left\Vert u\right\Vert _{\lambda }=\rho
\}>\eta $ for all $\lambda \geq \Lambda _{N},$ since $2<k<2^{\ast }.$ The
lemma is proved.
\end{proof}

\begin{lemma}
\label{lem2}Suppose that conditions $(V1)-(V2),(D1)^{\prime }$ and $\left(
D2\right) $ hold. Then there exist $\rho >0$ and $\eta >0$ such that $\inf
\{J_{\lambda ,a}(u):u\in X_{\lambda }\ $with$\ \left\Vert u\right\Vert
_{\lambda }=\rho \}>\eta $ for all $a>0$ and $\lambda \geq \Lambda _{N}.$
\end{lemma}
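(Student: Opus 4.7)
The proof parallels that of Lemma \ref{lem1} with one crucial replacement: instead of using the $L^{\infty}$-bound on $q$ combined with the embedding estimate (\ref{11}), I would exploit the pointwise decay $q(x)\le c_{\ast}|x|^{k(N-2)/2-N}$ from $(D1)^{\prime}$ together with the Caffarelli-Kohn-Nirenberg inequality, which by (\ref{1-2}) yields $\overline{\mu}_{1}^{(k)}>0$ and the direct bound
\[
\int_{\mathbb{R}^{N}}q(x)|u|^{k}\,dx\le \bigl(\overline{\mu}_{1}^{(k)}\bigr)^{-1}\|u\|_{D^{1,2}}^{k}.
\]

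Combining $(D2)$ with the limit in $(D1)^{\prime}$ gives the pointwise estimate $F(x,s)\le \tfrac{1}{k}q(x)s^{k}$ for $s\ge 0$, so integrating and applying the inequality above leads to
\[
\int_{\mathbb{R}^{N}}F(x,u)\,dx\le \frac{1}{k\,\overline{\mu}_{1}^{(k)}}\|u\|_{D^{1,2}}^{k}\le \frac{1}{k\,\overline{\mu}_{1}^{(k)}}\|u\|_{\lambda}^{k},
\]
where the last step uses $V\ge 0$ and hence $\|u\|_{D^{1,2}}\le\|u\|_{\lambda}$. Since $m\ge 0$, the Kirchhoff contribution $\frac{a}{2}\widehat{m}(\|u\|_{D^{1,2}}^{2})$ in $J_{\lambda,a}$ is nonnegative, and discarding it yields, for every $a>0$ and $\lambda\ge \Lambda_{N}$,
\[
J_{\lambda,a}(u)\ge \frac{\min\{b,1\}}{2}\|u\|_{\lambda}^{2}-\frac{1}{k\,\overline{\mu}_{1}^{(k)}}\|u\|_{\lambda}^{k}.
\]
Since $2<k<2^{\ast}$, choosing $\rho>0$ small enough that the right-hand side at $\|u\|_{\lambda}=\rho$ is strictly positive produces the desired $\eta>0$.

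The only subtle point—and the reason for restating the lemma under $(D1)^{\prime}$—is uniformity in $a>0$. This follows because the Kirchhoff term is nonnegative and can be dropped in a lower bound, while the constant $\overline{\mu}_{1}^{(k)}$ appearing in the key estimate depends only on $q$ (through $c_{\ast}$) and is independent of both $a$ and $\lambda$. I do not anticipate any real obstacle beyond correctly invoking (\ref{1-2}) to obtain the $a$- and $\lambda$-independent control of the nonlinearity; this sharper form of the bound is what will later make the explicit $a$-threshold in Theorems \ref{t3}--\ref{t4} available.
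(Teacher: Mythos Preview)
Your proposal is correct and follows essentially the same route as the paper: both drop the nonnegative Kirchhoff term, use $(D1)'$--$(D2)$ to obtain $F(x,s)\le \tfrac{1}{k}q(x)s^{k}$, bound the nonlinear integral by the Caffarelli--Kohn--Nirenberg inequality, and conclude from the resulting estimate $J_{\lambda,a}(u)\ge \tfrac{1}{2}\min\{b,1\}\|u\|_{\lambda}^{2}-C\|u\|_{\lambda}^{k}$ with $C$ independent of $a$ and $\lambda$. The only cosmetic difference is that the paper writes the constant as $c_{\ast}/(k\overline{\nu}_{1}^{(k)})$ via the explicit weight bound, whereas you invoke $\overline{\mu}_{1}^{(k)}$ directly; by (\ref{1-2}) these are equivalent (yours is in fact the sharper of the two).
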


\begin{proof}
It follows from conditions $\left( D1\right) ^{\prime }$ and $\left(
D2\right) $ that%
\begin{equation}
f(x,s)\leq c_{\ast }\left\vert x\right\vert ^{\frac{k(N-2)}{2}%
-N}s^{k-1}\quad \text{for all}\ s\geq 0  \label{3.6}
\end{equation}%
and
\begin{equation}
F(x,s)\leq \frac{c_{\ast }}{k}\left\vert x\right\vert ^{\frac{k(N-2)}{2}%
-N}s^{k}\quad \text{for all}\ s\geq 0.  \label{3.3}
\end{equation}%
Then, by (\ref{1-1}) and (\ref{3.3}), for every $u\in X$ and $\lambda \geq
\Lambda _{N}$ one has
\begin{eqnarray}
\int_{\mathbb{R}^{N}}F(x,u)dx &\leq &\frac{c_{\ast }}{k}\int_{\mathbb{R}%
^{N}}\left\vert x\right\vert ^{\frac{k(N-2)}{2}-N}|u|^{k}dx\leq \frac{%
c_{\ast }}{k\overline{\nu }_{1}^{(k)}}\left( \int_{\mathbb{R}^{N}}|\nabla
u|^{2}dx\right) ^{k/2}  \label{3.4} \\
&\leq &\frac{c_{\ast }}{k\overline{\nu }_{1}^{(k)}}\Vert u\Vert _{\lambda
}^{k},  \notag
\end{eqnarray}%
which implies that
\begin{eqnarray*}
J_{\lambda ,a}(u) &\geq &\frac{1}{2}\left( b\int_{\mathbb{R}^{N}}\left\vert
\nabla u\right\vert ^{2}dx+\int_{\mathbb{R}^{N}}\lambda V(x)u^{2}dx\right) -%
\frac{c_{\ast }}{k\overline{\nu }_{1}^{(k)}}\Vert u\Vert _{\lambda }^{k} \\
&\geq &\frac{1}{2}\min \left\{ b,1\right\} \Vert u\Vert _{\lambda }^{2}-%
\frac{c_{\ast }}{k\overline{\nu }_{1}^{(k)}}\Vert u\Vert _{\lambda }^{k}.
\end{eqnarray*}%
Thus, letting $\left\Vert u\right\Vert _{\lambda }=\rho >0$ small enough, it
is easy to obtain that there exists $\eta >0$ such that $\inf \{J_{\lambda
,a}(u):u\in X_{\lambda }\ $with$\ \left\Vert u\right\Vert _{\lambda }=\rho
\}>\eta $ for all $\lambda \geq \Lambda _{N},$ since $2<k<2^{\ast }.$ The
lemma is proved.
\end{proof}

\begin{lemma}
\label{lem3}Suppose that conditions $(V1)-(V2),(L1)$ and $(D1)-(D2)$ hold.
Let $\rho >0$ be as Lemma \ref{lem1}. Then there exists $e\in X$ with $\Vert
e\Vert _{\lambda }>\rho $ such that $J_{\lambda ,a}(e)<0$ for all $a>0$ and $%
\lambda >0.$
\end{lemma}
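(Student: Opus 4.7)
I aim to construct $e = t_0 \varphi_R$ as a suitably rescaled, compactly supported test function, so that the two $t^k$-order terms of $J_{\lambda,a}(t\varphi_R)$, coming from $\widehat{m}$ and $F$, combine into a strictly negative coefficient. The argument depends on two asymptotic identifications plus one scaling estimate that exploits the decay lower bound on $q$ from $(D1)$.

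First I would pin down the asymptotics. From $(L1)$, $m(t)/t^{(k-2)/2} \to m_\infty$, so integration gives $\widehat{m}(t)/t^{k/2} \to 2 m_\infty/k$, hence $\widehat{m}(t^2 A)/t^k \to \frac{2 m_\infty}{k} A^{k/2}$ as $t\to\infty$ for every $A>0$. On the nonlinearity side, conditions $(D1)$ and $(D2)$ together imply that $s \mapsto F(x,s)/s^k$ is nondecreasing on $(0,\infty)$ with pointwise limit $q(x)/k$; consequently, for any fixed nonnegative compactly supported $u$ the dominated convergence theorem (with dominating function $|q|_\infty\,u^k/k$) yields $t^{-k}\int_{\mathbb{R}^N} F(x,tu)\,dx \to \tfrac{1}{k}\int_{\mathbb{R}^N} q\,u^k\,dx$.

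Next I would build the scaling family. Fix $\varphi \in C_c^\infty(\mathbb{R}^N)$ nonnegative and nontrivial, supported in $\{1 < |x| < 2\}$, and set $\varphi_R(x) = \varphi(x/R)$. A change of variables gives $\|\varphi_R\|_{D^{1,2}}^2 = R^{N-2}\|\varphi\|_{D^{1,2}}^2$, while the lower bound $q(x) \geq c|x|^{-\mu}$ for $|x|$ large (from $(D1)$) yields, for all $R$ sufficiently large, $\int_{\mathbb{R}^N} q\,\varphi_R^k\,dx \geq c'\,R^{N-\mu}\int_{1<|y|<2}|y|^{-\mu}\varphi(y)^k\,dy$. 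Combining these,
\[
\frac{\|\varphi_R\|_{D^{1,2}}^k}{\int_{\mathbb{R}^N} q\,\varphi_R^k\,dx} \leq C\,R^{\frac{k(N-2)}{2} - N + \mu} \longrightarrow 0 \quad \text{as } R \to \infty,
\]
because $\mu < N - k(N-2)/2$. As a by-product this proves the claim in Remark \ref{r1}(ii). Given $a > 0$, pick $R$ so large that $a\, m_\infty \|\varphi_R\|_{D^{1,2}}^k < \int_{\mathbb{R}^N} q\,\varphi_R^k\,dx$.

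Finally, since $\varphi_R \in C_c^\infty(\mathbb{R}^N) \subset X$, I evaluate $J_{\lambda,a}$ along the ray $t \mapsto J_{\lambda,a}(t \varphi_R)$. The quadratic piece $\tfrac{t^2}{2}\bigl(b\|\varphi_R\|_{D^{1,2}}^2 + \lambda \int_{\mathbb{R}^N} V \varphi_R^2\,dx\bigr)$ is $O(t^2)$ (finite because $V$ is continuous and $\varphi_R$ has compact support), and combining with the asymptotics above gives
\[
\lim_{t\to\infty} \frac{J_{\lambda,a}(t \varphi_R)}{t^k} = \frac{a\, m_\infty}{k}\|\varphi_R\|_{D^{1,2}}^k - \frac{1}{k}\int_{\mathbb{R}^N} q\,\varphi_R^k\,dx < 0,
\]
so $J_{\lambda,a}(t\varphi_R) \to -\infty$. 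For $t_0$ sufficiently large both $J_{\lambda,a}(t_0 \varphi_R) < 0$ and $\|t_0 \varphi_R\|_\lambda > \rho$ hold, and one takes $e := t_0 \varphi_R$. The main obstacle is the scaling estimate in the third paragraph: one must delicately pit $(D1)$'s lower bound $\liminf_{|x|\to\infty} |x|^\mu q(x) > 0$ against the $D^{1,2}$-scaling $R^{N-2}$, and it is precisely the hypothesis $\mu < N - k(N-2)/2$ that forces the exponent $k(N-2)/2 - N + \mu$ to be negative. A secondary subtlety is the convergence step for $\int F(x,tu)\,dx$, which is exactly where the monotonicity of $F/s^k$ (equivalently $(D2)$) is used.
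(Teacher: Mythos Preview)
Your proof is correct and follows essentially the same strategy as the paper's: both arguments first show, via a dilation family exploiting the condition $\mu < N - \tfrac{k(N-2)}{2}$ from $(D1)$, that $\inf_{u}\|u\|_{D^{1,2}}^{k}\big/\!\int q|u|^{k}\,dx = 0$, then pick a test function $\phi_{k}$ with $a m_{\infty}\|\phi_{k}\|_{D^{1,2}}^{k} < \int q\,\phi_{k}^{k}\,dx$ and compute $\lim_{t\to\infty} t^{-k}J_{\lambda,a}(t\phi_{k}) < 0$ via $(L1)$, $(D1)$--$(D2)$ and dominated convergence. The only cosmetic difference is that the paper uses the $L^{k}$-normalized dilation $u_{n}(x)=n^{-N/k}u(x/n)$ rather than your annulus-supported $\varphi_{R}(x)=\varphi(x/R)$, and writes the limit with $\leq$ rather than $=$; both variants lead to the same conclusion.
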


\begin{proof}
Let $u\in X\backslash \left\{ 0\right\} $ with $u>0$ and define $u_{n}\left(
x\right) =n^{-N/k}u\left( \frac{x}{n}\right) .$ A direct calculation shows
that%
\begin{equation*}
\int_{\mathbb{R}^{N}}\left\vert \nabla u_{n}\right\vert ^{2}dx=n^{N-2-\frac{%
2N}{k}}\int_{\mathbb{R}^{N}}\left\vert \nabla u\right\vert ^{2}dx
\end{equation*}%
and%
\begin{equation*}
\int_{\mathbb{R}^{N}}q(x)u_{n}^{k}dx=n^{-N}\int_{\mathbb{R}%
^{N}}q(x)u^{k}\left( \frac{x}{n}\right) dx=\int_{\mathbb{R}%
^{N}}q(nx)u^{k}\left( x\right) dx.
\end{equation*}%
Thus, it follows from condition $(D1)$ and Fatou's lemma that%
\begin{eqnarray*}
\frac{\left\Vert u_{n}\right\Vert _{D^{1,2}}^{k}}{\int_{\mathbb{R}%
^{N}}q(x)u_{n}^{k}dx} &=&\frac{n^{-\left( N-\frac{k(N-2)}{2}\right)
}\left\Vert u\right\Vert _{D^{1,2}}^{k}}{\int_{\mathbb{R}^{N}}q(nx)u^{k}%
\left( x\right) dx}=\frac{\left\Vert u\right\Vert _{D^{1,2}}^{k}}{n^{N-\frac{%
k(N-2)}{2}}\int_{\mathbb{R}^{N}}q(nx)u^{k}\left( x\right) dx} \\
&\leq &\frac{R_{0}^{\mu }\left\Vert u\right\Vert _{D^{1,2}}^{k}}{n^{N-\frac{%
k(N-2)}{2}-\mu }\int_{\left\vert x\right\vert \leq R_{0}}\left\vert
nx\right\vert ^{\mu }q(nx)u^{k}\left( x\right) dx} \\
&\leq &\frac{R_{0}^{\mu }\left\Vert u\right\Vert _{D^{1,2}}^{k}}{Cn^{N-\frac{%
k(N-2)}{2}-\mu }\int_{\left\vert x\right\vert \leq R_{0}}u^{k}dx}\rightarrow
0\text{ as }n\rightarrow \infty ,
\end{eqnarray*}%
which indicates that%
\begin{equation*}
\inf_{u\in X}\frac{\left( \int_{\mathbb{R}^{N}}\left\vert \nabla
u\right\vert ^{2}dx\right) ^{k/2}}{\int_{\mathbb{R}^{N}}q(x)|u|^{k}dx}=0.
\label{3-7}
\end{equation*}%
Thus, for each $a>0$, there exists $\phi _{k}\in X\backslash \left\{
0\right\} $ with $\phi _{k}>0$ such that%
\begin{equation*}
am_{\infty }\left\Vert \phi _{k}\right\Vert _{D^{1,2}}^{k}-\int_{\mathbb{R}%
^{N}}q(x)\phi _{k}^{k}dx<0.
\end{equation*}%
Using the above inequality, together with conditions $(L1),(D1)-(D2)$ and
Lebesgue's dominated convergence theorem, leads to
\begin{eqnarray*}
\lim_{t\rightarrow +\infty }\frac{J_{\lambda ,a}(t\phi _{k})}{t^{k}}
&=&\lim_{t\rightarrow +\infty }\frac{1}{2t^{k-2}}\left( b\left\Vert \phi
_{k}\right\Vert _{D^{1,2}}^{2}+\int_{\mathbb{R}^{N}}\lambda V(x)\phi
_{k}^{2}dx\right)  \\
&&+\lim_{t\rightarrow +\infty }\left[ \frac{a\hat{m}\left( t^{2}\left\Vert
\phi _{k}\right\Vert _{D^{1,2}}^{2}\right) }{2t^{k}\left\Vert \phi
_{k}\right\Vert _{D^{1,2}}^{k}}\left\Vert \phi _{k}\right\Vert
_{D^{1,2}}^{k}-\int_{\mathbb{R}^{N}}\frac{F(x,t\phi _{k})}{t^{k}\phi _{k}^{k}%
}\phi _{k}^{k}dx\right]  \\
&\leq &\frac{am_{\infty }}{k}\left\Vert \phi _{k}\right\Vert _{D^{1,2}}^{k}-%
\frac{1}{k}\int_{\mathbb{R}^{N}}q(x)\phi _{k}^{k}dx \\
&=&\frac{1}{k}\left( am_{\infty }\left\Vert \phi _{k}\right\Vert
_{D^{1,2}}^{k}-\int_{\mathbb{R}^{N}}q(x)\phi _{k}^{k}dx\right) <0.
\end{eqnarray*}%
This implies that $J_{\lambda ,a}(t\phi _{k})\rightarrow -\infty $ as $%
t\rightarrow +\infty .$ Therefore, there exists $e\in X$ with $\Vert e\Vert
_{\lambda }>\rho $ such that $J_{\lambda ,a}(e)<0$ and the lemma is proved.
\end{proof}

Note that if condition $(L1)$ is removed, then we can also arrive at a
conclusion similar to Lemma \ref{lem3}, but the parameter $a>0$ must be
small. Now we state this result.

\begin{lemma}
\label{lem10}Suppose that conditions $(V1)-(V2)$ and $(D1)-(D2)$ hold. Let $%
\rho >0$ be as Lemma \ref{lem1}. Then there exist $\widetilde{a}_{\ast }>0$
and $e\in X$ with $\Vert e\Vert _{\lambda }>\rho $ such that $J_{\lambda
,a}(e)<0$ for all $0<a<\widetilde{a}_{\ast }$ and $\lambda >0.$
\end{lemma}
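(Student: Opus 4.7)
The plan is to follow the strategy of Lemma \ref{lem3}, but since we no longer have the asymptotic growth condition $(L1)$ on $m$, we cannot freely let $t\to\infty$ along an arbitrary ray while keeping $a$ fixed. Instead, we exploit the potential well structure: by choosing the test function with support inside $\overline{\Omega}=V^{-1}(0)$, the $\lambda$-dependent term $\tfrac{\lambda}{2}\int_{\mathbb{R}^{N}}V u^{2}\,dx$ vanishes identically, which delivers the uniformity in $\lambda$ required by the statement.

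First, using condition $(D1)$ (in particular $q\not\equiv 0$ on $\overline{\Omega}$) together with the smoothness of $\partial\Omega$, I would select $\varphi\in C_{c}^{\infty}(\Omega)$ with $\varphi\geq 0$ and $\int_{\mathbb{R}^{N}}q(x)\varphi^{k}\,dx>0$, placing a bump on a positive-measure subset of $\Omega$ on which $q>0$. Because $\operatorname{supp}\varphi\subset\overline{\Omega}$ and $V\equiv 0$ on $\overline{\Omega}$, we have $\int_{\mathbb{R}^{N}}V\varphi^{2}\,dx=0$; hence $\|t\varphi\|_{\lambda}=t\|\varphi\|_{D^{1,2}}$ for every $t>0$ and $\lambda>0$, and
$$J_{\lambda,a}(t\varphi)=\frac{a}{2}\widehat{m}\bigl(t^{2}\|\varphi\|_{D^{1,2}}^{2}\bigr)+\frac{bt^{2}}{2}\|\varphi\|_{D^{1,2}}^{2}-\int_{\mathbb{R}^{N}}F(x,t\varphi)\,dx$$
is completely independent of $\lambda$. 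Using $(D1)$, $(D2)$ and the dominated convergence theorem, I would check that $\lim_{t\to\infty}t^{-k}\int_{\mathbb{R}^{N}}F(x,t\varphi)\,dx=\tfrac{1}{k}\int_{\mathbb{R}^{N}}q(x)\varphi^{k}\,dx>0$; since $k>2$, this $t^{k}$-term dominates the $t^{2}$-term, so a single $t_{0}>0$ can be fixed so that simultaneously $t_{0}\|\varphi\|_{D^{1,2}}>\rho$ and
$$\frac{bt_{0}^{2}}{2}\|\varphi\|_{D^{1,2}}^{2}-\int_{\mathbb{R}^{N}}F(x,t_{0}\varphi)\,dx\leq -C_{0}$$
for some constant $C_{0}>0$ depending only on $b$, $\varphi$ and $q$ (not on $a$ or $\lambda$).

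Finally, since $\widehat{m}$ is continuous, the value $M_{0}:=\widehat{m}(t_{0}^{2}\|\varphi\|_{D^{1,2}}^{2})$ is a fixed finite number. Setting $\widetilde{a}_{\ast}:=2C_{0}/(M_{0}+1)$, for every $0<a<\widetilde{a}_{\ast}$ and every $\lambda>0$ one obtains $J_{\lambda,a}(t_{0}\varphi)\leq\tfrac{a}{2}M_{0}-C_{0}<0$, so $e:=t_{0}\varphi$ does the job. The principal obstacle is really the choice of test function: it must sit inside $\overline{\Omega}$ to annihilate the $\lambda V u^{2}$ contribution (which is exactly what delivers uniformity in $\lambda$) while still producing a strictly positive pairing against $q$; once such a $\varphi$ is in hand, the rest is a routine scaling comparison between the $t^{2}$ and $t^{k}$ terms, with the smallness of $a$ absorbing the nonlocal Kirchhoff contribution $\widehat{m}(t_{0}^{2}\|\varphi\|_{D^{1,2}}^{2})$ uniformly.
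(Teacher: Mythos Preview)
Your argument is correct and actually handles the uniformity in $\lambda$ more carefully than the paper does. The paper's proof takes a slightly different route: it picks any $\phi_{k}\in X$ with $\int_{\mathbb{R}^{N}}q\,\phi_{k}^{k}\,dx>0$ (borrowed from the scaling construction in Lemma~\ref{lem3}, which uses the positivity of $q$ at infinity rather than on $\overline{\Omega}$), computes $\lim_{t\to\infty}t^{-k}J_{\lambda,0}(t\phi_{k})<0$ for the functional with $a=0$, extracts $e$ with $J_{\lambda,0}(e)<0$, and then invokes the continuity $J_{\lambda,a}(e)\to J_{\lambda,0}(e)$ as $a\to 0^{+}$ to produce $\widetilde{a}_{\ast}$. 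Your choice to support the test function inside $\Omega$ kills the $\lambda V$ term outright, so $e$ and $\widetilde{a}_{\ast}$ are manifestly independent of $\lambda$ and you even get an explicit expression $\widetilde{a}_{\ast}=2C_{0}/(M_{0}+1)$; the paper's version leaves that uniformity implicit. The only point to be careful about is that ``$q\not\equiv 0$ on $\overline{\Omega}$'' must be read as $q>0$ on a set of positive measure in $\Omega$ for your bump $\varphi$ to exist---this is the natural reading in the paper's setting, but it is worth stating explicitly.
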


\begin{proof}
According to the argument of Lemma \ref{lem3}, there exists $\phi _{k}\in
X\backslash \left\{ 0\right\} $ with $\phi _{k}>0$ such that $\int_{\mathbb{R%
}^{N}}q(x)\phi _{k}^{k}dx>0$ by condition $(D1).$ Then using conditions $%
(D1)-(D2)$, together with Lebesgue's dominated convergence theorem one has
\begin{eqnarray*}
\lim_{t\rightarrow +\infty }\frac{J_{\lambda ,0}(t\phi _{k})}{t^{k}}
&=&\lim_{t\rightarrow +\infty }\frac{1}{2t^{k-2}}\left( b\left\Vert \phi
_{k}\right\Vert _{D^{1,2}}^{2}+\int_{\mathbb{R}^{N}}\lambda V(x)\phi
_{k}^{2}dx\right) -\lim_{t\rightarrow +\infty }\int_{\mathbb{R}^{N}}\frac{%
F(x,t\phi _{k})}{t^{k}\phi _{k}^{k}}\phi _{k}^{k}dx \\
&\leq &-\frac{1}{k}\int_{\mathbb{R}^{N}}q(x)\phi _{k}^{k}dx<0,
\end{eqnarray*}%
where $J_{\lambda ,0}(u)=J_{\lambda ,a}(u)$ with $a=0.$ Thus, if $J_{\lambda
,0}(t\phi _{k})\rightarrow -\infty $ as $t\rightarrow +\infty ,$ then there
exists $e\in X$ with $\Vert e\Vert _{\lambda }>\rho $ such that $J_{\lambda
,0}(e)<0.$ Since $J_{\lambda ,a}(e)\rightarrow J_{\lambda ,0}(e)$ as $%
a\rightarrow 0^{+},$ we obtain that there exists $\widetilde{a}_{\ast }>0$
such that $J_{\lambda ,a}(e)<0$ for all $0<a<\widetilde{a}_{\ast }$ and $%
\lambda >0.$
\end{proof}

\begin{remark}
\label{re1} We point out that the value of $\widetilde{a}_{\ast }$ can not
be determined in general, but only in some special cases. For example, let
us assume that $N=4,m\left( t\right) =t$ and%
\begin{equation*}
f\left( x,s\right) =\left\{
\begin{array}{ll}
q\left( x\right) s^{2} & \text{ if }s\geq 0, \\
0 & \text{ if }s<0,%
\end{array}%
\right.
\end{equation*}%
where the function $q$ is as in condition $\left( D1\right) .$ Clearly, the
function $m(t)$ does not satisfy condition $(L1).$ Define the minimum problem%
\begin{equation*}
\widehat{\mu }_{1}(\lambda )=\inf_{u\in H^{1}(\mathbb{R}^{N})}\frac{\left(
\int_{\mathbb{R}^{N}}|\nabla u|^{2}dx\right) ^{2}\left( b\int_{\mathbb{R}%
^{N}}|\nabla u|^{2}dx+\int_{\mathbb{R}^{N}}\lambda V(x)u^{2}dx\right) }{%
\left( \int_{\mathbb{R}^{N}}q\left( x\right) \left\vert u\right\vert
^{3}dx\right) ^{2}}.
\end{equation*}%
Then $\widehat{\mu }_{1}(\lambda )\geq \frac{\min \left\{ 1,b\right\} S^{6}}{%
|q|_{\infty }^{2}\left\vert \left\{ V<c_{0}\right\} \right\vert ^{1/2}}>0$
for all $\lambda >\frac{S^{2}}{c_{0}\left\vert \left\{ V<c_{0}\right\}
\right\vert ^{1/2}}.$ Indeed, by $\left( \ref{10}\right) ,$ for every $%
\lambda >\frac{S^{2}}{c_{0}\left\vert \left\{ V<c_{0}\right\} \right\vert
^{1/2}}$ there holds%
\begin{eqnarray*}
&&\frac{\left( \int_{\mathbb{R}^{N}}|\nabla u|^{2}dx\right) ^{2}\left(
b\int_{\mathbb{R}^{N}}|\nabla u|^{2}dx+\int_{\mathbb{R}^{N}}\lambda
V(x)u^{2}dx\right) }{\left( \int_{\mathbb{R}^{N}}q\left( x\right) \left\vert
u\right\vert ^{3}dx\right) ^{2}} \\
&\geq &\frac{\left( \int_{\mathbb{R}^{N}}|\nabla u|^{2}dx\right) ^{2}\left(
b\int_{\mathbb{R}^{N}}|\nabla u|^{2}dx+\int_{\mathbb{R}^{N}}\lambda
V(x)u^{2}dx\right) }{|q|_{\infty }^{2}\left( \int_{\mathbb{R}^{N}}\left\vert
u\right\vert ^{3}dx\right) ^{2}} \\
&\geq &\frac{\min \left\{ 1,b\right\} S^{6}\left\Vert u\right\Vert
_{D^{1,2}}^{4}\left\Vert u\right\Vert _{\lambda }^{2}}{|q|_{\infty
}^{2}\left\vert \left\{ V<c_{0}\right\} \right\vert ^{1/2}\left\Vert
u\right\Vert _{\lambda }^{2}\left\Vert u\right\Vert _{D^{1,2}}^{4}} \\
&=&\frac{\min \left\{ 1,b\right\} S^{6}}{|q|_{\infty }^{2}\left\vert \left\{
V<c_{0}\right\} \right\vert ^{1/2}}>0,
\end{eqnarray*}%
which implies that $\widehat{\mu }_{1}\geq \frac{\min \left\{ 1,b\right\}
S^{6}}{|q|_{\infty }^{2}\left\vert \left\{ V<c_{0}\right\} \right\vert ^{1/2}%
}>0.$ Thus, for every $0<a<\frac{2}{9\widehat{\mu }_{1}^{2}(\lambda )}$
there exists $u_{0}\in X_{\lambda }$ such that%
\begin{equation}
\frac{1}{9}\left( \int_{\mathbb{R}^{N}}q\left( x\right) \left\vert
u_{0}\right\vert ^{3}dx\right) ^{2}>\frac{a}{2}\left\Vert u_{0}\right\Vert
_{D^{1,2}}^{4}\left( b\left\Vert u_{0}\right\Vert _{D^{1,2}}^{2}+\int_{%
\mathbb{R}^{N}}\lambda V(x)u_{0}^{2}dx\right) .  \label{3-10}
\end{equation}%
Note that%
\begin{eqnarray*}
J_{\lambda ,a}(tu_{0}) &=&\frac{at^{4}}{4}\left\Vert u_{0}\right\Vert
_{D^{1,2}}^{4}+\frac{t^{2}}{2}\left( b\left\Vert u_{0}\right\Vert
_{D^{1,2}}^{2}+\int_{\mathbb{R}^{N}}\lambda V(x)u_{0}^{2}dx\right) -\int_{%
\mathbb{R}^{N}}F(x,tu_{0})dx \\
&=&t^{2}\left[ \frac{at^{2}}{4}\left\Vert u_{0}\right\Vert _{D^{1,2}}^{4}+%
\frac{1}{2}\left( b\left\Vert u_{0}\right\Vert _{D^{1,2}}^{2}+\int_{\mathbb{R%
}^{N}}\lambda V(x)u_{0}^{2}dx\right) -\frac{t}{3}\int_{\mathbb{R}%
^{N}}q\left( x\right) \left\vert u_{0}\right\vert ^{3}dx\right]  \\
&=&t^{2}g(tu_{0}),
\end{eqnarray*}%
where
\begin{equation*}
g(tu_{0}):=\frac{at^{2}}{4}\left\Vert u_{0}\right\Vert _{D^{1,2}}^{4}+\frac{1%
}{2}\left( b\left\Vert u_{0}\right\Vert _{D^{1,2}}^{2}+\int_{\mathbb{R}%
^{N}}\lambda V(x)u_{0}^{2}dx\right) -\frac{t}{3}\int_{\mathbb{R}^{N}}q\left(
x\right) \left\vert u_{0}\right\vert ^{3}dx.
\end{equation*}%
A direct calculation shows that $\min_{t>0}g(tu_{0})<0$ by $\left( \ref{3-10}%
\right) .$ This indicates that there exists $t_{0}>0$ such that $J_{\lambda
,a}(t_{0}u_{0})<0.$ Letting $e=t_{0}u_{0}\in X_{\lambda }.$ Then $\Vert
e\Vert _{\lambda }>\rho ,$ since $\rho >0$ small enough. Hence, there exsits
$e\in X_{\lambda }$ with $\Vert e\Vert _{\lambda }>\rho $ such that $%
J_{\lambda ,a}(e)<0$ for all $0<a<\frac{2}{9\widehat{\mu }_{1}^{2}(\lambda )}
$ and $\lambda >\frac{S^{2}}{c_{0}\left\vert \left\{ V<c_{0}\right\}
\right\vert ^{1/2}}.$
\end{remark}

\begin{lemma}
\label{lem9}Suppose that conditions $(V1)-(V2),(L1),(D1)^{\prime }$ and $(D2)
$ hold. Let $\rho >0$ be as Lemma \ref{lem2}. Then for each $0<a<\frac{1}{%
m_{\infty }\overline{\mu }_{1}^{(k)}},$ there exists $e\in X$ with $\Vert
e\Vert _{\lambda }>\rho $ such that $J_{\lambda ,a}(e)<0$ for all $\lambda
>0.$
\end{lemma}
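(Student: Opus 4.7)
The plan is to mirror the structure of Lemma \ref{lem3}: exhibit a positive test function $\phi_k \in X$ along which $J_{\lambda,a}(t\phi_k) \to -\infty$ as $t \to \infty$, and then take $e = t_0\phi_k$ for $t_0$ large. The new obstacle is that under $(D1)^{\prime}$ the ratio $\Vert u \Vert_{D^{1,2}}^k / \int_{\mathbb{R}^N} q|u|^k \, dx$ is bounded below by the positive constant $\overline{\mu}_1^{(k)}$ (see (\ref{1-2})), so, unlike in Lemma \ref{lem3}, it cannot be driven to zero. The hypothesis $a < 1/(m_{\infty} \overline{\mu}_1^{(k)})$ is precisely what enables the leading coefficient in the expansion of $J_{\lambda,a}(t\phi_k)/t^k$ to be negative.

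First I would verify that $\overline{\mu}_1^{(k)}$ can be approached by compactly supported functions in $X$. Since $C_c^{\infty}(\mathbb{R}^N) \subset X$ is dense in $H^1(\mathbb{R}^N)$, and since $v \mapsto \int_{\mathbb{R}^N} q(x) |v|^k \, dx$ extends to a continuous functional on $D^{1,2}(\mathbb{R}^N)$ by virtue of the bound $q(x) \leq c_{\ast} |x|^{k(N-2)/2 - N}$ together with (\ref{1-1}), for each $\varepsilon > 0$ one can produce a non-negative, non-trivial $\phi_k \in C_c^{\infty}(\mathbb{R}^N) \subset X$ with
\begin{equation*}
\frac{\Vert \phi_k \Vert_{D^{1,2}}^k}{\int_{\mathbb{R}^N} q(x) \phi_k^k \, dx} < \overline{\mu}_1^{(k)} + \varepsilon.
\end{equation*}
Selecting $\varepsilon$ small enough that $\overline{\mu}_1^{(k)} + \varepsilon < 1/(a m_{\infty})$, which is possible by the assumption on $a$, yields
\begin{equation*}
a m_{\infty} \Vert \phi_k \Vert_{D^{1,2}}^k - \int_{\mathbb{R}^N} q(x) \phi_k^k \, dx < 0.
\end{equation*}

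Second I would compute $\lim_{t \to \infty} J_{\lambda,a}(t\phi_k)/t^k$. The quadratic term $(t^2/2)(b \Vert \phi_k \Vert_{D^{1,2}}^2 + \lambda \int V \phi_k^2 \, dx)$ divided by $t^k$ vanishes because $k > 2$. For the nonlocal term, condition $(L1)$ yields $\widehat{m}(s) / s^{k/2} \to 2 m_{\infty} / k$, so $(a/2) \widehat{m}(t^2 \Vert \phi_k \Vert_{D^{1,2}}^2) / t^k \to (a m_{\infty} / k) \Vert \phi_k \Vert_{D^{1,2}}^k$. For the nonlinearity, $(D2)$ together with $(D1)^{\prime}$ furnish the pointwise bound $F(x, s) \leq q(x) s^k / k$ for $s \geq 0$, and the dominating function $q(x) \phi_k^k / k$ is integrable since $\phi_k$ has compact support and $q$ is locally integrable, so dominated convergence gives $\int F(x, t\phi_k)/t^k \, dx \to (1/k) \int q(x) \phi_k^k \, dx$. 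Collecting,
\begin{equation*}
\lim_{t \to \infty} \frac{J_{\lambda,a}(t\phi_k)}{t^k} = \frac{1}{k}\left( a m_{\infty} \Vert \phi_k \Vert_{D^{1,2}}^k - \int_{\mathbb{R}^N} q(x) \phi_k^k \, dx \right) < 0,
\end{equation*}
hence $J_{\lambda,a}(t\phi_k) \to -\infty$, and for $t_0$ sufficiently large, $e := t_0 \phi_k$ satisfies $\Vert e \Vert_{\lambda} > \rho$ and $J_{\lambda,a}(e) < 0$ uniformly in $\lambda > 0$ (noting that the $\lambda$-dependent contribution disappears in the $t \to \infty$ scaling).

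The main obstacle is the first step: producing a near-minimizer of $\overline{\mu}_1^{(k)}$ inside $X$ rather than merely in $H^1(\mathbb{R}^N)$, since $X$ may be a strict subspace of $H^1$ when $V$ is unbounded. The cutoff-mollification strategy described above should be routine, but it rests on the continuity of $v \mapsto \int q(x) |v|^k \, dx$ in the $D^{1,2}$ topology, which is exactly the role played by the Caffarelli-Kohn-Nirenberg bound underlying (\ref{1-2}).
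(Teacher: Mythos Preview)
Your proposal is correct and follows essentially the same route as the paper: pick a near-minimizer $\phi_k$ of $\overline{\mu}_1^{(k)}$ so that $am_\infty\Vert\phi_k\Vert_{D^{1,2}}^k-\int q\phi_k^k<0$, use $(L1)$ and dominated convergence to show $J_{\lambda,a}(t\phi_k)/t^k$ tends to this negative quantity (divided by $k$), and conclude. You are in fact slightly more careful than the paper in arranging $\phi_k\in C_c^\infty(\mathbb{R}^N)\subset X$, whereas the paper simply takes $\psi_k\in H^1(\mathbb{R}^N)$ without comment; one minor caveat is that your parenthetical claim of uniformity in $\lambda$ is not quite justified as stated (the limit is $\lambda$-independent but the rate is not), though the paper's own proof is equally silent on this point and the lemma is only applied for fixed $\lambda$.
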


\begin{proof}
It follows from (\ref{1-2}) that for each $0<a<\frac{1}{m_{\infty }\overline{%
\mu }_{1}^{(k)}},$ there exists $\psi _{k}\in H^{1}(\mathbb{R}^{N})$ with $%
\psi _{k}>0$ such that
\begin{equation*}
\overline{\mu }_{1}^{(k)}\leq \frac{\left\Vert \psi _{k}\right\Vert
_{D^{1,2}}^{k}}{\int_{\mathbb{R}^{N}}q(x)\psi _{k}^{k}dx}<\frac{1}{%
am_{\infty }},
\end{equation*}%
which implies that%
\begin{equation*}
am_{\infty }\left\Vert \psi _{k}\right\Vert _{D^{1,2}}^{k}-\int_{\mathbb{R}%
^{N}}q(x)\psi _{k}^{k}dx<\frac{1}{\overline{\mu }_{1}^{(k)}}\left\Vert \psi
_{k}\right\Vert _{D^{1,2}}^{k}-\int_{\mathbb{R}^{N}}q(x)\psi _{k}^{k}dx\leq
0.
\end{equation*}%
Using this, together with conditions $(L1),(D1)^{\prime },(D2)$ and
Lebesgue's dominated convergence theorem, yields
\begin{eqnarray*}
\lim_{t\rightarrow +\infty }\frac{J_{\lambda ,a}(t\psi _{k})}{t^{k}}
&=&\lim_{t\rightarrow +\infty }\frac{1}{2t^{k-2}}\left( b\left\Vert \psi
_{k}\right\Vert _{D^{1,2}}^{2}+\int_{\mathbb{R}^{N}}\lambda V(x)\psi
_{k}^{2}dx\right)  \\
&&+\lim_{t\rightarrow +\infty }\left[ \frac{a\hat{m}\left( t^{2}\left\Vert
\psi _{k}\right\Vert _{D^{1,2}}^{2}\right) }{2t^{k}\left\Vert \psi
_{k}\right\Vert _{D^{1,2}}^{k}}\left\Vert \psi _{k}\right\Vert
_{D^{1,2}}^{k}-\int_{\mathbb{R}^{N}}\frac{F(x,t\psi _{k})}{t^{k}\psi _{k}^{k}%
}\psi _{k}^{k}dx\right]  \\
&\leq &\frac{am_{\infty }}{k}\left\Vert \psi _{k}\right\Vert _{D^{1,2}}^{k}-%
\frac{1}{k}\int_{\mathbb{R}^{N}}q(x)\psi _{k}^{k}dx \\
&=&\frac{1}{k}\left( am_{\infty }\left\Vert \psi _{k}\right\Vert
_{D^{1,2}}^{k}-\int_{\mathbb{R}^{N}}q(x)\psi _{k}^{k}dx\right) <0.
\end{eqnarray*}%
This implies that $J_{\lambda ,a}(t\psi _{k})\rightarrow -\infty $ as $%
t\rightarrow +\infty .$ Hence, for each $0<a<\frac{1}{m_{\infty }\overline{%
\mu }_{1}^{(k)}},$ there exists $e\in X$ with $\Vert e\Vert _{\lambda }>\rho
$ such that $J_{\lambda ,a}(e)<0$ for all $\lambda >0$ and the lemma is
proved.
\end{proof}

If condition $(L1)$ is not required, then we also have a conclusion similar
to Lemma \ref{lem9}, but $a>0$ must be small.

\begin{lemma}
\label{lem11}Suppose that conditions $(V1)-(V2),(D1)^{\prime }$ and $(D2)$
hold. Let $\rho >0$ be as Lemma \ref{lem2}. Then there exists $\overline{a}%
_{\ast }>0$ and $e\in X$ with $\Vert e\Vert _{\lambda }>\rho $ such that $%
J_{\lambda ,a}(e)<0$ for all $0<a<\overline{a}_{\ast }$ and $\lambda >0.$
\end{lemma}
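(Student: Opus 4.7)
The plan is to mirror the argument of Lemma \ref{lem10}, replacing the $L^{\infty}$-majorant supplied by $(D1)$ by the singular-weight bound $q(x)\leq c_{\ast}|x|^{k(N-2)/2-N}$ from $(D1)'$ together with the Caffarelli--Kohn--Nirenberg inequality (\ref{1-1}). Since $q\not\equiv 0$ on $\overline{\Omega}$, I would first pick $\psi_{k}\in X\setminus\{0\}$ with $\psi_{k}\geq 0$ -- for example a smooth bump supported near a point of $\overline{\Omega}$ where $q$ is strictly positive -- so that $\int_{\mathbb{R}^{N}}q(x)\psi_{k}^{k}\,dx>0$.

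Next I would analyze the asymptotic behaviour of $J_{\lambda,0}(t\psi_{k})/t^{k}$ as $t\to+\infty$, where $J_{\lambda,0}$ denotes $J_{\lambda,a}$ with $a=0$. The quadratic piece contributes $\tfrac{1}{2t^{k-2}}\bigl(b\|\psi_{k}\|_{D^{1,2}}^{2}+\int_{\mathbb{R}^{N}}\lambda V(x)\psi_{k}^{2}\,dx\bigr)$, which vanishes since $k>2$. For the nonlinear piece, $(D1)'$ combined with $(D2)$ gives the pointwise limit $F(x,t\psi_{k})/t^{k}\to q(x)\psi_{k}^{k}/k$, while (\ref{3.3}) supplies the integrable majorant $(c_{\ast}/k)|x|^{k(N-2)/2-N}\psi_{k}^{k}$, whose finiteness is guaranteed by (\ref{1-1}) applied to $\psi_{k}\in D^{1,2}(\mathbb{R}^{N})$. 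Lebesgue's dominated convergence theorem then yields
\begin{equation*}
\lim_{t\to+\infty}\frac{J_{\lambda,0}(t\psi_{k})}{t^{k}}\leq -\frac{1}{k}\int_{\mathbb{R}^{N}}q(x)\psi_{k}^{k}\,dx<0,
\end{equation*}
so $J_{\lambda,0}(t\psi_{k})\to-\infty$. I would then choose $t_{0}>0$ with $\|t_{0}\psi_{k}\|_{\lambda}>\rho$ and $J_{\lambda,0}(t_{0}\psi_{k})<0$, and set $e:=t_{0}\psi_{k}$.

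Finally, for this fixed $e$ the map $a\mapsto J_{\lambda,a}(e)=J_{\lambda,0}(e)+\tfrac{a}{2}\widehat{m}(\|e\|_{D^{1,2}}^{2})$ is continuous at $a=0$, so there exists $\overline{a}_{\ast}>0$ with $J_{\lambda,a}(e)<0$ for every $0<a<\overline{a}_{\ast}$; the bound $\|e\|_{\lambda}>\rho$ persists because $e$ is chosen independently of $a$. The only delicate point is producing an $L^{1}$ majorant for dominated convergence in the absence of $q\in L^{\infty}(\mathbb{R}^{N})$, and this is precisely where $(D1)'$ is tailored for Caffarelli--Kohn--Nirenberg; the usual obstacle of controlling the nonlocal $m$-term disappears because one first works at $a=0$ and then perturbs, which is also why $(L1)$ is not required here and $\overline{a}_{\ast}$ can only be shown to exist rather than computed explicitly.
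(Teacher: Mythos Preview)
Your proposal is correct and follows exactly the approach the paper intends: the paper's own proof is simply ``The proof is similar to that of Lemma \ref{lem10}, and we omit it here,'' and you have carried out precisely that adaptation, replacing the $L^{\infty}$ majorant from $(D1)$ by the weighted bound from $(D1)'$ together with the Caffarelli--Kohn--Nirenberg inequality (\ref{1-1}) to justify dominated convergence. Your identification of this as the only delicate adjustment is spot on.
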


\begin{proof}
The proof is similar to that of Lemma \ref{lem10}, and we omit it here.
\end{proof}

\begin{remark}
\label{r3} Similar to Remark \ref{re1}, the value of $\overline{a}_{\ast }$
can also not be determined in general, but only in some special cases. Next,
we give an example. For any real number $2<k<2^{\ast },$ we assume that $%
m\left( t\right) =m_{0}t^{\delta }$ with $\delta >\frac{k-2}{2}$ and%
\begin{equation*}
f\left( x,s\right) =\left\{
\begin{array}{ll}
q\left( x\right) s^{k-1} & \text{ if }s\geq 0, \\
0 & \text{ if }s<0,%
\end{array}%
\right.
\end{equation*}%
where the function $q$ satisfies condition $\left( D1\right) ^{\prime }.$ It
is easily seen that $m\left( t\right) $ does not satisfy condition $(L1).$
Let us consider the minimum problem:%
\begin{equation*}
\widetilde{\mu }_{1}^{(k)}(\lambda ):=\inf_{u\in H^{1}(\mathbb{R}^{N})}\frac{%
\left( \int_{\mathbb{R}^{N}}|\nabla u|^{2}dx\right) ^{\frac{(\delta +1)(k-2)%
}{2\delta -k+2}}\left( b\int_{\mathbb{R}^{N}}|\nabla u|^{2}dx+\int_{\mathbb{R%
}^{N}}\lambda V(x)u^{2}dx\right) }{\left( \int_{\mathbb{R}^{N}}q\left(
x\right) \left\vert u\right\vert ^{k}dx\right) ^{\frac{2\delta }{2\delta -k+2%
}}}.  \label{1-3}
\end{equation*}%
Then $\widetilde{\mu }_{1}^{(k)}(\lambda )\geq b\left( \overline{\nu }%
_{1}^{(k)}\right) ^{-\frac{2\delta }{2\delta -k+2}}>0$ for all $\lambda >0,$
where $\overline{\nu }_{1}^{(k)}$ is as (\ref{1-1}). Indeed, by condition $%
(D1)^{\prime }$ and the Caffarelli-Kohn-Nirenberg inequality one has%
\begin{equation*}
\int_{\mathbb{R}^{N}}q\left( x\right) \left\vert u\right\vert ^{k}dx\leq
\frac{c_{\ast }}{\overline{\nu }_{1}^{(k)}}\left( \int_{\mathbb{R}%
^{N}}|\nabla u|^{2}dx\right) ^{k/2}.
\end{equation*}%
Using the above inequality leads to for all $\lambda >0,$%
\begin{eqnarray*}
&&\frac{\left( \int_{\mathbb{R}^{N}}|\nabla u|^{2}dx\right) ^{\frac{(\delta
+1)(k-2)}{2\delta -k+2}}\left( b\int_{\mathbb{R}^{N}}|\nabla u|^{2}dx+\int_{%
\mathbb{R}^{N}}\lambda V(x)u^{2}dx\right) }{\left( \int_{\mathbb{R}%
^{N}}q\left( x\right) \left\vert u\right\vert ^{k}dx\right) ^{\frac{2\delta
}{2\delta -k+2}}} \\
&\geq &\frac{b\left( \int_{\mathbb{R}^{N}}|\nabla u|^{2}dx\right) ^{1+\frac{%
(\delta +1)(k-2)}{2\delta -k+2}}}{\left( \frac{c_{\ast }}{\overline{\nu }%
_{1}^{(k)}}\right) ^{\frac{2\delta }{2\delta -k+2}}\left( \int_{\mathbb{R}%
^{N}}|\nabla u|^{2}dx\right) ^{\frac{k\delta }{2\delta -k+2}}}=b\left( \frac{%
\overline{\nu }_{1}^{(k)}}{c_{\ast }}\right) ^{\frac{2\delta }{2\delta -k+2}%
}>0,
\end{eqnarray*}%
which implies that $\widetilde{\mu }_{1}^{(k)}(\lambda )\geq b\left( \frac{%
\overline{\nu }_{1}^{(k)}}{c_{\ast }}\right) ^{\frac{2\delta }{2\delta -k+2}%
}>0.$ Thus, for every $0<a<\frac{(\delta +1)(k-2)}{\delta m_{0}k}\left(
\frac{2\delta -k+2}{k\delta \widetilde{\mu }_{1}^{(k)}(\lambda )}\right) ^{%
\frac{2\delta -k+2}{k-2}},$ there exists $u_{0}\in X_{\lambda }$ such that%
\begin{eqnarray*}
&&\left( \frac{2\delta -k+2}{k\delta }\right) \left[ \frac{(\delta +1)(k-2)}{%
\delta m_{0}ak}\right] ^{\frac{k-2}{2\delta -k+2}}\left( \int_{\mathbb{R}%
^{N}}q\left( x\right) \left\vert u_{0}\right\vert ^{k}dx\right) ^{\frac{%
2\delta }{2\delta -k+2}} \\
&>&\left( \int_{\mathbb{R}^{N}}|\nabla u_{0}|^{2}dx\right) ^{\frac{(\delta
+1)(k-2)}{2\delta -k+2}}\left( b\int_{\mathbb{R}^{N}}|\nabla
u_{0}|^{2}dx+\int_{\mathbb{R}^{N}}\lambda V(x)u_{0}^{2}dx\right) .
\end{eqnarray*}%
Next, following the argument in Remark \ref{re1} we obtain that there exsits
$e\in X_{\lambda }$ with $\Vert e\Vert _{\lambda }>\rho $ such that $%
J_{\lambda ,a}(e)<0$ for all $0<a<\frac{(\delta +1)(k-2)}{\delta m_{0}k}%
\left( \frac{2\delta -k+2}{k\delta \widetilde{\mu }_{1}^{(k)}(\lambda )}%
\right) ^{\frac{2\delta -k+2}{k-2}}$ and $\lambda >0.$
\end{remark}

\section{Proofs of Theorems \protect\ref{t1} and \protect\ref{t2}}

Define%
\begin{equation*}
\alpha _{\lambda ,a}=\inf_{\gamma \in \Gamma _{\lambda }}\max_{0\leq t\leq
1}J_{\lambda ,a}(\gamma (t))
\end{equation*}%
and%
\begin{equation*}
\alpha _{0,a}\left( \Omega \right) =\inf_{\gamma \in \overline{\Gamma }%
_{\lambda }\left( \Omega \right) }\max_{0\leq t\leq 1}J_{\lambda
,a}|_{H_{0}^{1}\left( \Omega \right) }(\gamma (t)),
\end{equation*}%
where%
\begin{equation*}
\Gamma _{\lambda }=\{\gamma \in C([0,1],X_{\lambda }):\gamma (0)=0,\gamma
(1)=e\}
\end{equation*}%
and%
\begin{equation*}
\overline{\Gamma }_{\lambda }\left( \Omega \right) =\{\gamma \in
C([0,1],H_{0}^{1}\left( \Omega \right) ):\gamma (0)=0,\gamma (1)=e\}.
\end{equation*}%
Note that $\alpha _{0,a}\left( \Omega \right) $ independent of $\lambda .$
Following the argument in \cite{SW1}, we can take a number $D_{a}>0$ such
that $0<\eta \leq \alpha _{\lambda ,a}<\alpha _{0,a}(\Omega )<D_{a}$ for all
$\lambda \geq \Lambda _{N}$. Thus, by Lemmas \ref{lem1} and \ref{lem3} (or
Lemma \ref{lem10}) and the mountain pass theorem \cite{E2}, we obtain that
for each $\lambda \geq \Lambda _{N}$ and $a>0$ (or $0<a<\widetilde{{a}}%
_{\ast }$), there exists a sequence $\left\{ u_{n}\right\} \subset
X_{\lambda }$ such that
\begin{equation}
J_{\lambda ,a}(u_{n})\rightarrow \alpha _{\lambda ,a}>0\quad \text{and}\quad
(1+\Vert u_{n}\Vert _{\lambda })\Vert J_{\lambda ,a}^{\prime }(u_{n})\Vert
_{X_{\lambda }^{-1}}\rightarrow 0,\quad \text{as}\ n\rightarrow \infty ,
\label{3.5}
\end{equation}%
where $0<\eta \leq \alpha _{\lambda ,a}\leq \alpha _{0,a}\left( \Omega
\right) <D_{a}.$ Furthermore, we have the following results.

\begin{lemma}
\label{lem5}Suppose that conditions $(V1)-(V2),(L1)$ and $(D2)$ hold. Then
for each $a>0$ and $\lambda \geq \Lambda _{N},$ the sequence $\{u_{n}\}$
defined in (\ref{3.5}) is bounded in $X_{\lambda }.$
\end{lemma}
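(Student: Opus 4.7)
The plan is to apply the standard Cerami boundedness trick to the combination $kJ_{\lambda,a}(u_n)-\langle J_{\lambda,a}'(u_n),u_n\rangle$, with $k\in(2,2^{\ast})$ the exponent shared by $(L1)$ and $(D2)$. Using the explicit formulas for $J_{\lambda,a}$ and $J_{\lambda,a}'$ already established in the paper, a direct rearrangement (writing $t_u:=\|u\|_{D^{1,2}}^{2}$ for brevity) produces
\[
kJ_{\lambda,a}(u)-\langle J_{\lambda,a}'(u),u\rangle=\frac{k-2}{2}\Bigl(b\,t_u+\int_{\mathbb R^{N}}\lambda V(x)u^{2}\,dx\Bigr)+a\Bigl[\tfrac{k}{2}\widehat m(t_u)-m(t_u)\,t_u\Bigr]+\int_{\mathbb R^{N}}\bigl(f(x,u)u-kF(x,u)\bigr)dx.
\]

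The next step is to observe that the last two terms on the right are nonnegative. Specializing the integral inequality of $(L1)$ to $\sigma=0,\ \eta=t_u$ yields $\widehat m(t_u)\ge\frac{2t_u}{k}\,m(t_u)$, so the $a$-bracket is nonnegative. For the nonlinearity, since $s\mapsto f(x,s)/s^{k-1}$ is nondecreasing on $(0,\infty)$ by $(D2)$, for every $u>0$ one has
\[
F(x,u)=\int_{0}^{u}\frac{f(x,s)}{s^{k-1}}\,s^{k-1}\,ds\le\frac{f(x,u)}{u^{k-1}}\int_{0}^{u}s^{k-1}\,ds=\frac{f(x,u)\,u}{k},
\]
and for $u\le 0$ the integrand vanishes by the sign convention $f(x,s)\equiv 0$ for $s<0$; hence $f(x,u)u-kF(x,u)\ge 0$ pointwise. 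Since $b\,t_u+\int_{\mathbb R^N}\lambda V u^2\,dx\ge\min\{1,b\}\|u\|_\lambda^{2}$, combining the three estimates gives
\[
kJ_{\lambda,a}(u_n)-\langle J_{\lambda,a}'(u_n),u_n\rangle\ \ge\ \tfrac{k-2}{2}\min\{1,b\}\,\|u_n\|_{\lambda}^{2}.
\]

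Finally, the Cerami information in (3.5) controls the left-hand side: $J_{\lambda,a}(u_n)\to\alpha_{\lambda,a}$ bounds $|kJ_{\lambda,a}(u_n)|$, while $\|J_{\lambda,a}'(u_n)\|_{X_\lambda^{-1}}\|u_n\|_{\lambda}\le\|J_{\lambda,a}'(u_n)\|_{X_\lambda^{-1}}(1+\|u_n\|_{\lambda})\to 0$. These two facts force $\|u_n\|_{\lambda}^{2}\le C$, proving the lemma. There is no substantive obstacle here—no compactness or concentration analysis is needed; the whole argument is algebraic. The only conceptual point worth flagging is that hypothesis $(L1)$ is playing the role of an Ambrosetti--Rabinowitz condition for the Kirchhoff factor $m$, while $(D2)$ plays the usual Ambrosetti--Rabinowitz role for $f$; together they are exactly what is needed to coerce $\|u_n\|_\lambda$ out of the Cerami estimate.
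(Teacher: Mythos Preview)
Your proof is correct and follows essentially the same route as the paper: both compute the combination $J_{\lambda,a}(u_n)-\tfrac{1}{k}\langle J_{\lambda,a}'(u_n),u_n\rangle$ (you simply scale by $k$), invoke $(L1)$ with $\sigma=0$ to handle the Kirchhoff term and $(D2)$ to get $F(x,u)\le\tfrac{1}{k}f(x,u)u$, and then read off the quadratic lower bound $\tfrac{(k-2)\min\{1,b\}}{2k}\|u_n\|_\lambda^2$. Your remark that $(L1)$ and $(D2)$ are playing the role of Ambrosetti--Rabinowitz conditions for $m$ and $f$ respectively is an apt summary of the mechanism.
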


\begin{proof}
By condition $(D2),$ for $s>0$ one has%
\begin{eqnarray}
F(x,s)-\frac{1}{k}f(x,s)s &=&\int_{0}^{s}\frac{f(x,t)}{t^{k-1}}%
t^{k-1}dt-\int_{0}^{s}\frac{f(x,s)}{s^{k-1}}t^{k-1}dt  \notag \\
&=&\int_{0}^{s}\left( \frac{f(x,t)}{t^{k-1}}-\frac{f(x,s)}{s^{k-1}}\right)
t^{k-1}dt  \notag \\
&\leq &0.  \label{3.11}
\end{eqnarray}%
For $n$ large enough, it follows from condition $(L1)$ and $(\ref{3.5})-(\ref%
{3.11})$ that%
\begin{eqnarray*}
\alpha _{\lambda ,a}+1 &\geq &J_{\lambda ,a}\left( u_{n}\right) -\frac{1}{k}%
\langle J_{\lambda ,a}^{\prime }(u_{n}),u_{n}\rangle \\
&=&\frac{k-2}{2k}\left( b\int_{\mathbb{R}^{N}}\left\vert \nabla
u_{n}\right\vert ^{2}+\lambda V(x)u_{n}^{2}dx\right) \\
&&+\frac{a}{2}\left[ \widehat{m}\left( \left\Vert u_{n}\right\Vert
_{D^{1,2}}^{2}\right) -\frac{2}{k}m\left( \left\Vert u_{n}\right\Vert
_{D^{1,2}}^{2}\right) \left\Vert u_{n}\right\Vert _{D^{1,2}}^{2}\right] \\
&&-\int_{\mathbb{R}^{N}}\left[ F(x,u_{n})-\frac{1}{k}f(x,u_{n})u_{n}\right]
dx \\
&\geq &\frac{(k-2)\min \{b,1\}}{2k}\Vert u_{n}\Vert _{\lambda }^{2},
\end{eqnarray*}%
which implies that $\{u_{n}\}$ is bounded in $X_{\lambda }$ for each $a>0$
and $\lambda \geq \Lambda _{N}.$
\end{proof}

\begin{lemma}
\label{lem7}Suppose that $N\geq 3$, conditions $(V1)-(V2),(L3)$ with $\delta
\geq \frac{2}{N-2}$ and $(D1)-(D2)$ hold. Then for all $0<a<\widetilde{{a}}%
_{\ast }$ and
\begin{equation*}
\lambda >\widetilde{{\Lambda }}_{0}:=\left\{
\begin{array}{ll}
\frac{|q|_{\infty }}{c_{0}}\max \left\{ \left( \frac{|q|_{\infty }}{am_{0}%
\overline{S}^{2^{\ast }}}\right) ^{\frac{k-2}{2^{\ast }-k}},\frac{2^{\ast }-k%
}{2^{\ast }-2}\right\} & \text{ if }\delta =\frac{2}{N-2}, \\
\frac{|q|_{\infty }\left( 2^{\ast }-k\right) }{c_{0}(2^{\ast }-2)} & \text{
if }\delta >\frac{2}{N-2},%
\end{array}%
\right.
\end{equation*}%
the sequence $\{u_{n}\}$ defined in (\ref{3.5}) is bounded in $X_{\lambda }.$
\end{lemma}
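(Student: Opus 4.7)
I would argue by contradiction. Suppose that $\Vert u_n\Vert_\lambda\to\infty$ along a subsequence, and decompose $\Vert u_n\Vert_\lambda^2=t_n+A_n$ where $t_n=\Vert u_n\Vert_{D^{1,2}}^2$ and $A_n=\lambda\int_{\mathbb{R}^N}V(x)u_n^2\,dx$. The Cerami condition $(1+\Vert u_n\Vert_\lambda)\Vert J'_{\lambda,a}(u_n)\Vert\to 0$ gives $\langle J'_{\lambda,a}(u_n),u_n\rangle=o(1)$, and combined with the bound $f(x,s)\leq q(x)s^{k-1}$ from $(\ref{3.1})$ together with $m(t_n)\geq m_0 t_n^\delta$ for $t_n\geq T_0$ from $(L3)$, this yields
\[
am_0\,t_n^{\delta+1}+bt_n+A_n\;\leq\;|q|_\infty\int_{\mathbb{R}^N}u_n^k\,dx+O(1).
\]
The alternative in which $t_n$ remains bounded will be absorbed at the end, where the bootstrap below forces $A_n$ bounded as well and hence contradicts $\Vert u_n\Vert_\lambda\to\infty$.

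For the right-hand side I would redo the interpolation behind $(\ref{10})$. Splitting $\int u_n^2$ across $\{V\geq c_0\}$ and $\{V<c_0\}$, using $\lambda c_0\int_{\{V\geq c_0\}}u_n^2\leq A_n$, Sobolev on the complement, $\int u_n^{2^*}\leq S^{-2^*}t_n^{2^*/2}$, and the subadditivity $(x+y)^p\leq x^p+y^p$ for $p=(2^*-k)/(2^*-2)\in(0,1)$, this should produce
\[
\int_{\mathbb{R}^N}u_n^k\,dx\;\leq\;\frac{A_n^p\,t_n^{2^*(1-p)/2}}{(\lambda c_0)^p\,\overline{S}^{\,2^*(1-p)}}+\Theta_{k,N}\,t_n^{k/2}.
\]
Feeding this back and dropping positive terms on the left yields $A_n\leq\mu A_n^p t_n^{2^*(1-p)/2}+|q|_\infty\Theta_{k,N}t_n^{k/2}+O(1)$, where $\mu=|q|_\infty/((\lambda c_0)^p\overline{S}^{\,2^*(1-p)})$. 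Either $A_n$ stays bounded (in which case $am_0 t_n^{\delta+1}\leq O(t_n^{k/2})+O(1)$ bounds $t_n$ because $\delta+1>k/2$, since $\delta\geq 2/(N-2)>(k-2)/2$), or $A_n\to\infty$ and a simple bootstrap gives $A_n\leq\mu^{1/(1-p)}t_n^{2^*/2}\bigl(1+o(1)\bigr)$.

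Reinjecting this control of $A_n$ into the Cerami inequality reduces everything to
\[
am_0\,t_n^{\delta+1}\;\leq\;\mu^{1/(1-p)}\,t_n^{2^*/2}+O(t_n^{k/2})+O(1).
\]
When $\delta>2/(N-2)$ we have $\delta+1>2^*/2$, so the left-hand side dominates every remaining power for $t_n$ large and the contradiction follows immediately (the stated threshold $\lambda>|q|_\infty(2^*-k)/(c_0(2^*-2))$ is what the Young step behind the bootstrap needs in order to keep the window for $A_n$ nonempty). In the Sobolev-critical case $\delta=2/(N-2)$ the exponents $\delta+1$ and $2^*/2$ coincide, and the contradiction rests on the strict coefficient inequality $am_0>\mu^{1/(1-p)}$, which rearranges to exactly the threshold $\lambda>\frac{|q|_\infty}{c_0}\bigl(|q|_\infty/(am_0\overline{S}^{\,2^*})\bigr)^{(k-2)/(2^*-k)}$ built into $\widetilde{\Lambda}_0$. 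The main obstacle is therefore this borderline case $\delta=2/(N-2)$: only a sharp tracking of every Young and Sobolev constant through the interpolation and the bootstrap will produce the precise dependence of $\widetilde{\Lambda}_0$ on $a$, $m_0$, and the best Sobolev constant $\overline{S}$.
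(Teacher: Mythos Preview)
Your proposal is correct and rests on the same ingredients as the paper's proof: contradiction via $\langle J_{\lambda,a}'(u_n),u_n\rangle=o(1)$, the growth bound $m(t)\ge m_0t^\delta$ from $(L3)$, the pointwise bound $f(x,s)\le q(x)s^{k-1}$ from $(D1)$--$(D2)$, and the interpolation estimate $(\ref{10})$ for $\int u_n^k$. The organization, however, is different. The paper argues by an explicit case split: for $\delta=\tfrac{2}{N-2}$ it distinguishes (A) $\|u_n\|_{D^{1,2}}\to\infty$ with the ratio $A_n/\|u_n\|_{D^{1,2}}^{2(\delta+1)}$ above a threshold, (B) the same with the ratio below threshold, and (C) $\|u_n\|_{D^{1,2}}$ bounded; for $\delta>\tfrac{2}{N-2}$ it keeps only the analogues (D) and (E). Cases A/B are what produce the $a$-dependent part of $\widetilde{\Lambda}_0$, while C/E use Young's inequality on the interpolation and give the second threshold $\tfrac{|q|_\infty(2^*-k)}{c_0(2^*-2)}$.

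Your bootstrap $A_n\le\mu^{1/(1-p)}t_n^{2^*/2}(1+o(1))$ is exactly the device that merges Cases A and B into a single step, and your treatment of bounded $t_n$ via $A_n\le CA_n^p+O(1)$ (with $p<1$) replaces the paper's Young-inequality argument in Cases C/E. This is a genuine streamlining. One small inaccuracy: your own route does \emph{not} use any ``Young step behind the bootstrap,'' and in fact for $\delta>\tfrac{2}{N-2}$ your argument goes through for every $\lambda>0$; the threshold $\tfrac{|q|_\infty(2^*-k)}{c_0(2^*-2)}$ is an artifact of the paper's Case C/E approach rather than something your bootstrap needs.
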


\begin{proof}
$\left( i\right) $ $\delta =\frac{2}{N-2}:$ Note that $2(\delta +1)=2^{\ast
}.$ Suppose on the contrary. Then $\Vert u_{n}\Vert _{\lambda }\rightarrow
+\infty $ as $n\rightarrow \infty $. The proof is divided into three
separate cases:

Case $A:\Vert u_{n}\Vert _{D^{1,2}}\rightarrow \infty $ and
\begin{equation}
\frac{\int_{\mathbb{R}^{N}}\lambda V(x)u_{n}^{2}dx}{\left\Vert
u_{n}\right\Vert _{D^{1,2}}^{2(\delta +1)}}\geq \lambda c_{0}S^{-2^{\ast
}}\left( \frac{|q|_{\infty }}{\lambda c_{0}}\right) ^{\frac{2^{\ast }-2}{k-2}%
}.  \label{14-3}
\end{equation}%
By (\ref{3.5}), we have%
\begin{equation*}
\frac{\langle J_{a,\lambda }^{\prime }(u_{n}),u_{n}\rangle }{\Vert
u_{n}\Vert _{D^{1,2}}^{2(\delta +1)}}=o(1),
\end{equation*}%
where $o(1)$ denotes a quantity which goes to zero as $n\rightarrow \infty .$
Using this, together with condition $(L3)$ and (\ref{3.1}), gives%
\begin{eqnarray}
o(1) &=&\frac{am\left( \left\Vert u_{n}\right\Vert _{D^{1,2}}^{2}\right)
\left\Vert u_{n}\right\Vert _{D^{1,2}}^{2}}{\Vert u_{n}\Vert
_{D^{1,2}}^{2(\delta +1)}}+\frac{b\left\Vert u_{n}\right\Vert
_{D^{1,2}}^{2}+\int_{\mathbb{R}^{N}}\lambda V(x)u_{n}^{2}dx}{\Vert
u_{n}\Vert _{D^{1,2}}^{2(\delta +1)}}-\frac{\int_{\mathbb{R}%
^{N}}f(x,u_{n})u_{n}dx}{\Vert u_{n}\Vert _{D^{1,2}}^{2(\delta +1)}}  \notag
\\
&\geq &am_{0}+\frac{b}{\left\Vert u_{n}\right\Vert _{D^{1,2}}^{2\delta }}+%
\frac{\int_{\mathbb{R}^{N}}\lambda V(x)u_{n}^{2}dx-|q|_{\infty }\int_{%
\mathbb{R}^{N}}|u_{n}|^{k}dx}{\Vert u_{n}\Vert _{D^{1,2}}^{2(\delta +1)}}.
\label{14-6}
\end{eqnarray}%
Note that%
\begin{eqnarray*}
&&\int_{\mathbb{R}^{N}}|u_{n}|^{k}dx \\
&\leq &\left( \frac{1}{\lambda c_{0}}\int_{\mathbb{R}^{N}}\lambda
V(x)u_{n}^{2}dx+\left\vert \left\{ V<c_{0}\right\} \right\vert
^{2/N}S^{-2}\left\Vert u_{n}\right\Vert _{D^{1,2}}^{2}\right) ^{\frac{%
2^{\ast }-k}{2^{\ast }-2}}\left( S^{-2^{\ast }}\left\Vert u_{n}\right\Vert
_{D^{1,2}}^{2^{\ast }}\right) ^{\frac{k-2}{2^{\ast }-2}} \\
&\leq &S^{-\frac{2^{\ast }(k-2)}{2^{\ast }-2}}(\lambda c_{0})^{-\frac{%
2^{\ast }-k}{2^{\ast }-2}}\left( \int_{\mathbb{R}^{N}}\lambda
V(x)u_{n}^{2}dx\right) ^{\frac{2^{\ast }-k}{2^{\ast }-2}}\left\Vert
u_{n}\right\Vert _{D^{1,2}}^{\frac{2^{\ast }(k-2)}{2^{\ast }-2}} \\
&&+\left\vert \left\{ V<c_{0}\right\} \right\vert ^{\left( 2^{\ast
}-k\right) /2^{\ast }}S^{-k}\left\Vert u_{n}\right\Vert _{D^{1,2}}^{k}.
\end{eqnarray*}%
Then there holds%
\begin{eqnarray}
&&\frac{\int_{\mathbb{R}^{N}}\lambda V(x)u_{n}^{2}dx-|q|_{\infty }\int_{%
\mathbb{R}^{N}}|u_{n}|^{k}dx}{\Vert u_{n}\Vert _{D^{1,2}}^{2(\delta +1)}}
\notag \\
&\geq &\frac{\int_{\mathbb{R}^{N}}\lambda V(x)u_{n}^{2}dx}{\Vert u_{n}\Vert
_{D^{1,2}}^{2(\delta +1)}}\cdot \left[ 1-|q|_{\infty }(\lambda c_{0})^{-%
\frac{2^{\ast }-k}{2^{\ast }-2}}\left( \frac{S^{-2^{\ast }}\left\Vert
u_{n}\right\Vert _{D^{1,2}}^{2^{\ast }}}{\int_{\mathbb{R}^{N}}\lambda
V(x)u_{n}^{2}dx}\right) ^{\frac{k-2}{2^{\ast }-2}}\right]   \notag \\
&&-\frac{|q|_{\infty }\left\vert \left\{ V<c_{0}\right\} \right\vert
^{\left( 2^{\ast }-k\right) /2^{\ast }}}{S^{k}\left\Vert u_{n}\right\Vert
_{D^{1,2}}^{2(\delta +1)-k}}.  \label{14-1}
\end{eqnarray}%
It follows from (\ref{14-3})-(\ref{14-1}) that%
\begin{eqnarray*}
o\left( 1\right)  &\geq &am_{0}+\frac{b}{\left\Vert u_{n}\right\Vert
_{D^{1,2}}^{2\delta }}-\frac{|q|_{\infty }\left\vert \left\{ V<c_{0}\right\}
\right\vert ^{\left( 2^{\ast }-k\right) /2^{\ast }}}{S^{k}\left\Vert
u_{n}\right\Vert _{D^{1,2}}^{2(\delta +1)-k}} \\
&&+\frac{\int_{\mathbb{R}^{N}}\lambda V(x)u_{n}^{2}dx}{\Vert u_{n}\Vert
_{D^{1,2}}^{2(\delta +1)}}\cdot \left[ 1-|q|_{\infty }(\lambda c_{0})^{-%
\frac{2^{\ast }-k}{2^{\ast }-2}}\left( \frac{S^{-2^{\ast }}\left\Vert
u_{n}\right\Vert _{D^{1,2}}^{2^{\ast }}}{\int_{\mathbb{R}^{N}}\lambda
V(x)u_{n}^{2}dx}\right) ^{\frac{k-2}{2^{\ast }-2}}\right]  \\
&=&am_{0}+\frac{\int_{\mathbb{R}^{N}}\lambda V(x)u_{n}^{2}dx}{\Vert
u_{n}\Vert _{D^{1,2}}^{2(\delta +1)}}\left[ 1-|q|_{\infty }(\lambda c_{0})^{-%
\frac{2^{\ast }-k}{2^{\ast }-2}}\left( \frac{S^{-2^{\ast }}\left\Vert
u_{n}\right\Vert _{D^{1,2}}^{2^{\ast }}}{\int_{\mathbb{R}^{N}}\lambda
V(x)u_{n}^{2}dx}\right) ^{\frac{k-2}{2^{\ast }-2}}\right] +o\left( 1\right)
\\
&\geq &am_{0}+o\left( 1\right) .
\end{eqnarray*}%
This is a contradiction.

Case $B:\Vert u_{n}\Vert _{D^{1,2}}\rightarrow \infty $ and
\begin{equation}
\frac{\int_{\mathbb{R}^{N}}\lambda V(x)u_{n}^{2}dx}{\left\Vert
u_{n}\right\Vert _{D^{1,2}}^{2(\delta +1)}}<\lambda c_{0}S^{-2^{\ast
}}\left( \frac{|q|_{\infty }}{\lambda c_{0}}\right) ^{\frac{2^{\ast }-2}{k-2}%
}.  \label{14-4}
\end{equation}%
Applying (\ref{10}) and (\ref{14-4})$\ $leads to%
\begin{eqnarray}
&&\frac{\int_{\mathbb{R}^{N}}|u_{n}|^{k}dx}{\left\Vert u_{n}\right\Vert
_{D^{1,2}}^{2(\delta +1)}}  \notag \\
&\leq &\frac{\left( \frac{1}{\lambda c_{0}}\int_{\mathbb{R}^{N}}\lambda
V(x)u_{n}^{2}dx+\left\vert \left\{ V<c_{0}\right\} \right\vert
^{2/N}S^{-2}\left\Vert u_{n}\right\Vert _{D^{1,2}}^{2}\right) ^{\frac{%
2^{\ast }-k}{2^{\ast }-2}}\left( S^{-2^{\ast }}\left\Vert u_{n}\right\Vert
_{D^{1,2}}^{2^{\ast }}\right) ^{\frac{k-2}{2^{\ast }-2}}}{\left\Vert
u_{n}\right\Vert _{D^{1,2}}^{2(\delta +1)}}  \notag \\
&\leq &\frac{\left[ \left( \frac{1}{\lambda c_{0}}\int_{\mathbb{R}%
^{N}}\lambda V(x)u_{n}^{2}dx\right) ^{\frac{2^{\ast }-k}{2^{\ast }-2}%
}+\left( \left\vert \left\{ V<c_{0}\right\} \right\vert
^{2/N}S^{-2}\left\Vert u_{n}\right\Vert _{D^{1,2}}^{2}\right) ^{\frac{%
2^{\ast }-k}{2^{\ast }-2}}\right] \left( S^{-2^{\ast }}\left\Vert
u_{n}\right\Vert _{D^{1,2}}^{2^{\ast }}\right) ^{\frac{k-2}{2^{\ast }-2}}}{%
\left\Vert u_{n}\right\Vert _{D^{1,2}}^{2(\delta +1)}}  \notag \\
&\leq &S^{-2^{\ast }}\left( \frac{|q|_{\infty }}{\lambda c_{0}}\right) ^{%
\frac{2^{\ast }-k}{k-2}}+\frac{S^{-k}\left\vert \left\{ V<c_{0}\right\}
\right\vert ^{\frac{2^{\ast }-k}{2^{\ast }}}}{\left\Vert u_{n}\right\Vert
_{D^{1,2}}^{2(\delta +1)-k}}  \notag \\
&=&S^{-2^{\ast }}\left( \frac{|q|_{\infty }}{\lambda c_{0}}\right) ^{\frac{%
2^{\ast }-k}{k-2}}+o\left( 1\right) .  \label{14-5}
\end{eqnarray}%
By (\ref{14-6}) and (\ref{14-5}) one has%
\begin{eqnarray*}
o\left( 1\right) =\frac{\langle J_{\lambda ,a}^{\prime }(u_{n}),u_{n}\rangle
}{\left\Vert u_{n}\right\Vert _{D^{1,2}}^{2(\delta +1)}} &\geq &am_{0}+\frac{%
b}{\left\Vert u_{n}\right\Vert _{D^{1,2}}^{2\delta }}+\frac{\int_{\mathbb{R}%
^{N}}\lambda V(x)u_{n}^{2}dx}{\Vert u_{n}\Vert _{D^{1,2}}^{2(\delta +1)}}-%
\frac{|q|_{\infty }\int_{\mathbb{R}^{N}}|u_{n}|^{k}dx}{\Vert u_{n}\Vert
_{D^{1,2}}^{2(\delta +1)}} \\
&\geq &am_{0}+\frac{\int_{\mathbb{R}^{N}}\lambda V(x)u_{n}^{2}dx}{\Vert
u_{n}\Vert _{D^{1,2}}^{2(\delta +1)}}-|q|_{\infty }S^{-2^{\ast }}\left(
\frac{|q^{+}|_{\infty }}{\lambda c_{0}}\right) ^{\frac{2^{\ast }-k}{k-2}%
}+o\left( 1\right)  \\
&\geq &am_{0}-|q|_{\infty }S^{-2^{\ast }}\left( \frac{|q|_{\infty }}{\lambda
c_{0}}\right) ^{\frac{2^{\ast }-k}{k-2}}+o\left( 1\right)
\end{eqnarray*}%
which contradicts with%
\begin{equation*}
\lambda >\frac{|q|_{\infty }}{c_{0}}\left( \frac{|q|_{\infty }}{%
am_{0}S^{2^{\ast }}}\right) ^{\frac{k-2}{2^{\ast }-k}}.
\end{equation*}

Case $C:\int_{\mathbb{R}^{N}}\lambda V(x)u_{n}^{2}dx\rightarrow \infty $ and
$\left\Vert u_{n}\right\Vert _{D^{1,2}}\leq C_{\ast }$ for some $C_{\ast }>0$
and for all $n.$ From (\ref{10}), (\ref{13}) and condition $(L3)$ it follows
that%
\begin{eqnarray}
o(1) &=&\frac{am\left( \left\Vert u\right\Vert _{D^{1,2}}^{2}\right)
\left\Vert u\right\Vert _{D^{1,2}}^{2}}{\int_{\mathbb{R}^{N}}\lambda
V(x)u_{n}^{2}dx}+\frac{b\left\Vert u\right\Vert _{D^{1,2}}^{2}+\int_{\mathbb{%
R}^{N}}\lambda V(x)u^{2}dx}{\int_{\mathbb{R}^{N}}\lambda V(x)u_{n}^{2}dx}-%
\frac{\int_{\mathbb{R}^{N}}f(x,u_{n})u_{n}dx}{\int_{\mathbb{R}^{N}}\lambda
V(x)u_{n}^{2}dx}  \notag \\
&\geq &\frac{b\left\Vert u_{n}\right\Vert _{D^{1,2}}^{2}}{\int_{\mathbb{R}%
^{N}}\lambda V(x)u_{n}^{2}dx}+1-\frac{|q|_{\infty }\int_{\mathbb{R}%
^{N}}\left\vert u_{n}\right\vert ^{k}dx}{\int_{\mathbb{R}^{N}}\lambda
V(x)u_{n}^{2}dx}.  \label{14-2}
\end{eqnarray}%
Applying (\ref{10}) and the Young inequality gives%
\begin{eqnarray}
\int_{\mathbb{R}^{N}}|u_{n}|^{k}dx &\leq &\frac{2^{\ast }-k}{2^{\ast }-2}%
\left( \frac{1}{\lambda c_{0}}\int_{\mathbb{R}^{N}}\lambda
V(x)u_{n}^{2}dx+\left\vert \left\{ V<c_{0}\right\} \right\vert
^{2/N}S^{-2}\Vert u_{n}\Vert _{D^{1,2}}^{2}\right)  \notag \\
&&+\frac{k-2}{2^{\ast }-2}S^{-2^{\ast }}\Vert u_{n}\Vert _{D^{1,2}}^{2^{\ast
}}  \label{14-7}
\end{eqnarray}%
By (\ref{14-7}) and the fact of $\Vert u_{n}\Vert _{D^{1,2}}\leq C_{\ast }$
for all $n,$ we obtain that
\begin{eqnarray}
\frac{\int_{\mathbb{R}^{N}}|u_{n}|^{k}dx}{\int_{\mathbb{R}^{N}}\lambda
V(x)u_{n}^{2}dx} &\leq &\frac{\left( 2^{\ast }-k\right) }{(2^{\ast
}-2)\lambda c_{0}}+\frac{\left( 2^{\ast }-k\right) \left\vert \left\{
V<c_{0}\right\} \right\vert ^{2/N}S^{-2}C_{\ast }^{2}+(k-2)S^{-2^{\ast
}}C_{\ast }^{2^{\ast }}}{(2^{\ast }-2)\int_{\mathbb{R}^{N}}\lambda
V(x)u_{n}^{2}dx}  \notag \\
&=&\frac{2^{\ast }-k}{(2^{\ast }-2)\lambda c_{0}}+o\left( 1\right) .
\label{14-9}
\end{eqnarray}%
Using (\ref{14-2}) and (\ref{14-9}) yields%
\begin{eqnarray*}
o(1) &\geq &\frac{b\left\Vert u_{n}\right\Vert _{D^{1,2}}^{2}}{\int_{\mathbb{%
R}^{N}}\lambda V(x)u_{n}^{2}dx}+1-\frac{|q|_{\infty }\int_{\mathbb{R}%
^{N}}\left\vert u_{n}\right\vert ^{p}dx}{\int_{\mathbb{R}^{N}}\lambda
V(x)u_{n}^{2}dx} \\
&\geq &1-\frac{|q|_{\infty }\left( 2^{\ast }-k\right) }{(2^{\ast }-2)\lambda
c_{0}}+o\left( 1\right) ,
\end{eqnarray*}%
which contradicts with%
\begin{equation*}
\lambda >\frac{\left( 2^{\ast }-k\right) |q|_{\infty }}{(2^{\ast }-2)c_{0}}.
\end{equation*}%
$\left( ii\right) $ $\delta >\frac{2}{N-2}:$ Clearly, $2(\delta +1)>2^{\ast
}.$ Suppose on the contrary. Then $\Vert u_{n}\Vert _{\lambda }\rightarrow
+\infty $ as $n\rightarrow \infty $. We consider the proof in two separate
cases:

Case $D:\Vert u_{n}\Vert _{D^{1,2}}\rightarrow \infty .$ It follows from (%
\ref{13})--(\ref{3.1}) and condition $(L3)$ that%
\begin{eqnarray}
o(1) &=&\frac{am\left( \left\Vert u_{n}\right\Vert _{D^{1,2}}^{2}\right)
\left\Vert u_{n}\right\Vert _{D^{1,2}}^{2}}{\Vert u_{n}\Vert
_{D^{1,2}}^{2^{\ast }}}+\frac{b\left\Vert u_{n}\right\Vert
_{D^{1,2}}^{2}+\int_{\mathbb{R}^{N}}\lambda V(x)u_{n}^{2}dx}{\Vert
u_{n}\Vert _{D^{1,2}}^{2^{\ast }}}-\frac{\int_{\mathbb{R}%
^{N}}f(x,u_{n})u_{n}dx}{\Vert u_{n}\Vert _{D^{1,2}}^{2^{\ast }}}  \notag \\
&\geq &am_{0}\Vert u_{n}\Vert _{D^{1,2}}^{2(\delta +1)-2^{\ast }}+\frac{b}{%
\left\Vert u_{n}\right\Vert _{D^{1,2}}^{2^{\ast }-2}}+\frac{\int_{\mathbb{R}%
^{N}}\lambda V(x)u_{n}^{2}dx-|q|_{\infty }\int_{\mathbb{R}^{N}}\left\vert
u_{n}\right\vert ^{k}dx}{\left\Vert u_{n}\right\Vert _{D^{1,2}}^{2^{\ast }}}.
\label{3-6}
\end{eqnarray}%
By (\ref{14-7}), we deduce that%
\begin{eqnarray*}
&&\frac{\int_{\mathbb{R}^{N}}\lambda V(x)u_{n}^{2}dx-|q|_{\infty }\int_{%
\mathbb{R}^{N}}\left\vert u_{n}\right\vert ^{k}dx}{\left\Vert
u_{n}\right\Vert _{D^{1,2}}^{2^{\ast }}} \\
&\geq &\left( 1-\frac{|q|_{\infty }\left( 2^{\ast }-k\right) }{\lambda
c_{0}\left( 2^{\ast }-2\right) }\right) \frac{\int_{\mathbb{R}^{N}}\lambda
V(x)u_{n}^{2}dx}{\left\Vert u_{n}\right\Vert _{D^{1,2}}^{2^{\ast }}} \\
&&-\frac{|q|_{\infty }S^{-2}\left( 2^{\ast }-k\right) \left\vert \left\{
V<c_{0}\right\} \right\vert ^{2/N}}{(2^{\ast }-2)\left\Vert u_{n}\right\Vert
_{D^{1,2}}^{2^{\ast }-2}}-|q|_{\infty }S^{-2^{\ast }}\left( \frac{k-2}{%
2^{\ast }-2}\right) \\
&\geq &-\frac{|q|_{\infty }S^{-2}\left( 2^{\ast }-k\right) \left\vert
\left\{ V<c_{0}\right\} \right\vert ^{2/N}}{(2^{\ast }-2)\left\Vert
u_{n}\right\Vert _{D^{1,2}}^{2^{\ast }-2}}-|q|_{\infty }S^{-2^{\ast }}\left(
\frac{k-2}{2^{\ast }-2}\right) .
\end{eqnarray*}%
Using this, together with (\ref{3-6}), leads to
\begin{eqnarray*}
o(1) &\geq &am_{0}\Vert u_{n}\Vert _{D^{1,2}}^{2(\delta +1)-2^{\ast }}+\frac{%
b}{\left\Vert u_{n}\right\Vert _{D^{1,2}}^{2^{\ast }-2}}+\frac{\int_{\mathbb{%
R}^{N}}\lambda V(x)u_{n}^{2}dx-|q|_{\infty }\int_{\mathbb{R}^{N}}\left\vert
u_{n}\right\vert ^{k}dx}{\left\Vert u_{n}\right\Vert _{D^{1,2}}^{2^{\ast }}}
\\
&\geq &am_{0}\Vert u_{n}\Vert _{D^{1,2}}^{2(\delta +1)-2^{\ast }}-\frac{%
|q|_{\infty }S^{-2}\left( 2^{\ast }-k\right) \left\vert \left\{
V<c_{0}\right\} \right\vert ^{2/N}}{(2^{\ast }-2)\left\Vert u_{n}\right\Vert
_{D^{1,2}}^{2^{\ast }-2}}-|q|_{\infty }S^{-2^{\ast }}\left( \frac{k-2}{%
2^{\ast }-2}\right) \\
&\rightarrow &\infty \text{ as }n\rightarrow \infty ,
\end{eqnarray*}%
since $2(\delta +1)>2^{\ast }.$ This is a contradiction.

Case $E:\int_{\mathbb{R}^{N}}\lambda V(x)u_{n}^{2}dx\rightarrow \infty $ and
$\left\Vert u_{n}\right\Vert _{D^{1,2}}\leq C_{\ast }$ for some $C_{\ast }>0$
and for all $n.$ It follows from (\ref{13})--(\ref{3.1}) and condition $(L3)$
that%
\begin{eqnarray}
o(1) &=&\frac{am\left( \left\Vert u_{n}\right\Vert _{D^{1,2}}^{2}\right)
\left\Vert u_{n}\right\Vert _{D^{1,2}}^{2}}{\int_{\mathbb{R}^{N}}\lambda
V(x)u_{n}^{2}dx}+\frac{b\left\Vert u_{n}\right\Vert _{D^{1,2}}^{2}+\int_{%
\mathbb{R}^{N}}\lambda V(x)u_{n}^{2}dx}{\int_{\mathbb{R}^{N}}\lambda
V(x)u_{n}^{2}dx}-\frac{\int_{\mathbb{R}^{N}}f(x,u_{n})u_{n}dx}{\int_{\mathbb{%
R}^{N}}\lambda V(x)u_{n}^{2}dx}  \notag \\
&\geq &\frac{b\left\Vert u_{n}\right\Vert _{D^{1,2}}^{2}}{\int_{\mathbb{R}%
^{N}}\lambda V(x)u_{n}^{2}dx}+1-\frac{|q|_{\infty }\int_{\mathbb{R}%
^{N}}\left\vert u_{n}\right\vert ^{k}dx}{\int_{\mathbb{R}^{N}}\lambda
V(x)u_{n}^{2}dx}.  \label{3.7}
\end{eqnarray}%
By (\ref{14-9}) and (\ref{3.7}) one has%
\begin{eqnarray*}
o(1) &\geq &\frac{b\left\Vert u_{n}\right\Vert _{D^{1,2}}^{2}}{\int_{\mathbb{%
R}^{N}}\lambda V(x)u_{n}^{2}dx}+1-\frac{|q|_{\infty }\int_{\mathbb{R}%
^{N}}\left\vert u_{n}\right\vert ^{k}dx}{\int_{\mathbb{R}^{N}}\lambda
V(x)u_{n}^{2}dx} \\
&\geq &1-\frac{|q|_{\infty }\left( 2^{\ast }-k\right) }{\lambda c_{0}\left(
2^{\ast }-2\right) }+o\left( 1\right) ,
\end{eqnarray*}%
which contradicts with%
\begin{equation*}
\lambda >\frac{|q|_{\infty }\left( 2^{\ast }-k\right) }{c_{0}(2^{\ast }-2)}.
\end{equation*}%
In conclusion, the sequence $\{u_{n}\}$ is bounded in $X_{\lambda }$ for all
$0<a<\widetilde{{a}}_{\ast }$ and $\lambda >\widetilde{{\Lambda }}_{0}.$
This completes the proof.
\end{proof}

We now investigate the following two compactness results for the functional $%
J_{\lambda ,a}$ under conditions $(D1)-(D2)$.

\begin{proposition}
\label{m2} Suppose that $N\geq 1,$ conditions $(V1)-(V2),(L1)$ and $%
(D1)-(D2) $ hold. Then for each $D>0$ there exists $\widetilde{{\Lambda }}=%
\widetilde{{\Lambda }}(a,D)\geq \Lambda _{N}>0$ such that $J_{\lambda ,a}$
satisfies the $(C)_{\alpha }$--condition in $X_{\lambda }$ for all $\alpha
<D $ and $\lambda >\widetilde{{\Lambda }}.$
\end{proposition}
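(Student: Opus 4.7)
I will verify the Cerami condition by the classical three-step scheme: boundedness of a Cerami sequence, weak convergence, and strong convergence. Given a Cerami sequence $\{u_n\} \subset X_\lambda$ with $J_{\lambda,a}(u_n) \to \alpha < D$ and $(1 + \|u_n\|_\lambda)\,\|J'_{\lambda,a}(u_n)\|_{X_\lambda^{-1}} \to 0$, the same computation as in Lemma \ref{lem5}, which under $(L1)$ and $(D2)$ estimates $\|u_n\|_\lambda^2$ by a multiple of $J_{\lambda,a}(u_n) - \frac{1}{k}\langle J'_{\lambda,a}(u_n), u_n \rangle$, yields a bound $\|u_n\|_\lambda \leq C_0(D)$ that is independent of both $\lambda \geq \Lambda_N$ and $a$. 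Passing to a subsequence, I extract a weak limit $u_\lambda \in X_\lambda$ with $u_n \to u_\lambda$ strongly in $L^r_{\mathrm{loc}}(\mathbb{R}^N)$ for every $2 \leq r < 2^{\ast}$ and a.e.\ on $\mathbb{R}^N$.

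The crux of the argument, and its main obstacle, is to upgrade this local convergence to strong convergence in $L^k(\mathbb{R}^N)$ for $\lambda$ sufficiently large, since the nonlocal term prevents a direct appeal to global compactness. I split $\mathbb{R}^N = B_R \cup (\{V \geq c_0\} \setminus B_R) \cup (\{V < c_0\} \setminus B_R)$; compactness handles $B_R$. On the high-potential piece the steep well gives
\[
\int_{\{V \geq c_0\} \setminus B_R} u_n^2\, dx \;\leq\; \frac{1}{\lambda c_0} \int_{\mathbb{R}^N} \lambda V(x)\, u_n^2\, dx \;\leq\; \frac{C_0(D)^2}{\lambda c_0},
\]
which is small for $\lambda$ large. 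On $\{V < c_0\} \setminus B_R$, whose measure tends to $0$ as $R \to \infty$ because $|\{V < c_0\}| < \infty$, H\"older combined with the uniform $L^{2^{\ast}}$ bound (resp.\ $L^r$-bound for $N = 1,2$) supplied by (\ref{11}) furnishes the remaining smallness. Interpolating this $L^2$-smallness with $L^{2^{\ast}}$-boundedness yields $\int_{\mathbb{R}^N}|u_n - u_\lambda|^k\, dx \to 0$ once $\lambda$ exceeds a threshold $\widetilde{\Lambda}(a, D) \geq \Lambda_N$. The pointwise bound (\ref{3.1}) and H\"older then give $\int_{\mathbb{R}^N} f(x, u_n)(u_n - u_\lambda)\, dx \to 0$.

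With the nonlinear term neutralized, I close the argument through the structure of $J'_{\lambda,a}$. After passing to a further subsequence, set $t_n := \|u_n\|_{D^{1,2}}^2 \to t^{\ast} \geq t_0 := \|u_\lambda\|_{D^{1,2}}^2$ and $\int_{\mathbb{R}^N} \lambda V u_n^2\, dx \to \ell \geq s_0 := \int_{\mathbb{R}^N} \lambda V u_\lambda^2\, dx$, both inequalities coming from weak lower semicontinuity. Testing $\langle J'_{\lambda,a}(u_n), u_n - u_\lambda \rangle \to 0$, using the weak-convergence identities $\int \nabla u_n \cdot \nabla u_\lambda\, dx \to t_0$ and $\int \lambda V u_n u_\lambda\, dx \to s_0$, and the continuity of $m$, I arrive at
\[
(a\, m(t^{\ast}) + b)(t^{\ast} - t_0) + (\ell - s_0) = 0.
\]
Since $a\,m(t^{\ast}) + b \geq b > 0$ and both summands on the left are nonnegative, each must vanish; hence $\|u_n\|_\lambda \to \|u_\lambda\|_\lambda$. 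Combined with weak convergence in the Hilbert space $X_\lambda$, this yields $u_n \to u_\lambda$ strongly in $X_\lambda$, which is the claimed $(C)_\alpha$ condition.
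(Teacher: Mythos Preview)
Your argument has a genuine gap at the step where you claim $\int_{\mathbb{R}^N}|u_n - u_\lambda|^k\,dx \to 0$ for $\lambda$ large. The steep-well decomposition you outline yields only
\[
\limsup_{n\to\infty}\int_{\mathbb{R}^N}|u_n - u_\lambda|^k\,dx \;\leq\; C(D)\,\lambda^{-\gamma}
\]
for some $\gamma>0$: on $\{V\geq c_0\}\setminus B_R$ your $L^2$-bound $C_0(D)^2/(\lambda c_0)$ is independent of both $n$ and $R$, so after interpolation with the uniform $L^{2^*}$-bound you obtain a term that is \emph{small for large $\lambda$} but does not vanish as $n\to\infty$ for any fixed $\lambda$. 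Consequently $\int f(x,u_n)(u_n-u_\lambda)\,dx$ need not tend to zero, and your limiting identity becomes only $(a\,m(t^*)+b)(t^*-t_0)+(\ell-s_0)\leq C\lambda^{-\gamma/k}$, which gives $\|u_n-u_\lambda\|_\lambda^2\to\tau$ with $\tau$ small but possibly nonzero. The positivity trick alone cannot close this.

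The paper proceeds differently. Setting $v_n=u_n-u_0$, it combines Brezis--Lieb splitting with $\langle J'_{\lambda,a}(u_n),u_n\rangle=o(1)$ and the limit equation satisfied by $u_0$ to derive
\[
o(1)=\int_{\mathbb{R}^N}\lambda V v_n^2\,dx+\bigl(a\,m(\|u_n\|_{D^{1,2}}^2)+b\bigr)\|v_n\|_{D^{1,2}}^2-\int_{\mathbb{R}^N}f(x,v_n)v_n\,dx,
\]
and, crucially, uses the structural hypothesis $(L1)$ --- the integral inequality $\int_\sigma^\eta m(t)\,dt\geq\frac{2(\eta-\sigma)}{k}m(\eta)$ --- together with $(D2)$ to extract from the energy difference $J_{\lambda,a}(u_n)-J_{\lambda,a}(u_0)\leq D-K$ an a~priori bound $\|v_n\|_\lambda^2\leq M(D)$ uniform in $\lambda$. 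Feeding this into $\int f(x,v_n)v_n\leq |q|_\infty\Pi_{\lambda,k}\|v_n\|_\lambda^k+o(1)$ (where $\Pi_{\lambda,k}\to 0$ as $\lambda\to\infty$) and absorbing yields
\[
\bigl(\min\{1,b\}-|q|_\infty\Pi_{\lambda,k}M(D)^{(k-2)/2}\bigr)\|v_n\|_\lambda^2\leq o(1);
\]
choosing $\widetilde{\Lambda}$ so that the bracket is positive then forces $\|v_n\|_\lambda\to 0$. Your proposal never invokes the integral inequality in $(L1)$ beyond the boundedness lemma, and it is precisely this ingredient that converts a merely small defect into an honest $o(1)$.
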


\begin{proof}
Let $\left\{ u_{n}\right\} $ be a $\left( C\right) _{\alpha }$-sequence with
$\alpha <D.$ By Lemma \ref{lem5}, we have $\left\{ u_{n}\right\} $ is
bounded in $X_{\lambda }.$ Then there exist a subsequence $\left\{
u_{n}\right\} $ and $u_{0}\in X_{\lambda }$ such that%
\begin{equation*}
u_{n}\rightharpoonup u_{0}\text{ weakly in }X_{\lambda }\quad \text{and}%
\quad u_{n}\rightarrow u_{0}\text{ strongly in }L_{loc}^{r}(\mathbb{R}^{N})%
\text{ for }2\leq r<2^{\ast }.
\end{equation*}%
Next, we prove that $u_{n}\rightarrow u_{0}$ strongly in $X_{\lambda }.$ Let
$v_{n}=u_{n}-u_{0}.$ Using condition $(V1)$ gives%
\begin{equation}
\int_{\mathbb{R}^{N}}v_{n}^{2}dx\leq \frac{1}{\lambda c_{0}}\left\Vert
v_{n}\right\Vert _{\lambda }^{2}+o(1).  \label{16}
\end{equation}%
Using this, together with the H\"{o}lder and Sobolev inequalities, for any $%
\lambda >\Lambda _{N}$, we check the following estimation:\newline
Case $(i)$ $N=1,2:$%
\begin{eqnarray*}
\int_{\mathbb{R}^{N}}\left\vert v_{n}\right\vert ^{r}dx &\leq &\left( \int_{%
\mathbb{R}^{N}}v_{n}^{2}dx\right) ^{1/2}\cdot \left( \int_{\mathbb{R}%
^{N}}v_{n}^{2(r-1)}dx\right) ^{1/2} \\
&\leq &\left[ \frac{1}{c_{0}\lambda }\left( 1+A_{N}^{8/N}\left\vert \left\{
V<c_{0}\right\} \right\vert ^{2/N}\right) ^{r-1}\right]
^{1/2}S_{2(r-1)}^{1-r}\left\Vert v_{n}\right\Vert _{\lambda }^{r}+o(1).
\end{eqnarray*}%
Case $(ii)$ $N\geq 3:$%
\begin{eqnarray*}
\int_{\mathbb{R}^{N}}\left\vert v_{n}\right\vert ^{r}dx &\leq &\left( \int_{%
\mathbb{R}^{N}}\left\vert v_{n}\right\vert ^{2}dx\right) ^{\frac{2^{\ast }-r%
}{2^{\ast }-2}}\left( \int_{\mathbb{R}^{N}}\left\vert v_{n}\right\vert
^{2^{\ast }}dx\right) ^{\frac{r-2}{2^{\ast }-2}} \\
&\leq &\left( \frac{1}{\lambda c_{0}}\right) ^{\frac{2^{\ast }-r}{2^{\ast }-2%
}}S^{-\frac{2^{\ast }\left( r-2\right) }{2^{\ast }-2}}\left\Vert
v_{n}\right\Vert _{\lambda }^{r}+o\left( 1\right) .
\end{eqnarray*}%
Set
\begin{equation*}
\Pi _{\lambda ,r}=\left\{
\begin{array}{ll}
\left[ \frac{1}{\lambda c_{0}}\left( 1+A_{N}^{8/N}\left\vert \left\{
V<c_{0}\right\} \right\vert ^{2/N}\right) ^{r-1}\right]
^{1/2}S_{2(r-1)}^{1-r} & \text{ if }N=1,2, \\
\left( \frac{1}{\lambda c_{0}}\right) ^{\frac{2^{\ast }-r}{2^{\ast }-2}}S^{-%
\frac{2^{\ast }\left( r-2\right) }{2^{\ast }-2}} & \text{ if }N\geq 3.%
\end{array}%
\right.
\end{equation*}%
Clearly, $\Pi _{\lambda ,r}\rightarrow 0$ as $\lambda \rightarrow \infty .$
Then we have%
\begin{equation}
\int_{\mathbb{R}^{N}}\left\vert v_{n}\right\vert ^{r}dx\leq \Pi _{\lambda
,r}\left\Vert v_{n}\right\Vert _{\lambda }^{r}+o\left( 1\right) .  \label{17}
\end{equation}%
Following the argument of \cite{SW2}, it is easy to verify that%
\begin{equation}
\int_{\mathbb{R}^{N}}F(x,v_{n})dx=\int_{\mathbb{R}^{N}}F(x,u_{n})dx-\int_{%
\mathbb{R}^{N}}F(x,u_{0})dx+o(1)  \label{14}
\end{equation}%
and%
\begin{equation*}
\sup_{\left\Vert h\right\Vert _{\lambda }=1}\int_{\mathbb{R}%
^{N}}[f(x,v_{n})-f(x,u_{n})+f(x,u_{0})]h(x)dx=o(1).
\end{equation*}%
Thus, using (\ref{12}), (\ref{14}) and Brezis-Lieb Lemma \cite{BL}, we
deduce that%
\begin{eqnarray*}
J_{a,\lambda }\left( u_{n}\right) -J_{a,\lambda }\left( u_{0}\right) &=&%
\frac{a}{2}\left[ \widehat{m}\left( \left\Vert u_{n}\right\Vert
_{D^{1,2}}^{2}\right) -\widehat{m}\left( \left\Vert u_{0}\right\Vert
_{D^{1,2}}^{2}\right) \right] +\frac{b}{2}\left\Vert v_{n}\right\Vert
_{D^{1,2}}^{2} \\
&&+\frac{1}{2}\int_{\mathbb{R}^{N}}\lambda V(x)v_{n}^{2}dx-\int_{\mathbb{R}%
^{N}}F(x,v_{n})dx+o\left( 1\right) .
\end{eqnarray*}%
Moreover, it follows from the boundedness of the sequence $\left\{
u_{n}\right\} $ in $X_{\lambda }$ that there exists a constant $A>0$ such
that $\left\Vert u_{n}\right\Vert _{D^{1,2}}^{2}\rightarrow A$ as $%
n\rightarrow \infty $, which indicates that for any $\varphi \in
C_{0}^{\infty }(\mathbb{R}^{N}),$ there holds
\begin{eqnarray*}
o(1) &=&\left\langle J_{a,\lambda }^{\prime }(u_{n}),\varphi \right\rangle \\
&=&\int_{\mathbb{R}^{N}}\lambda V(x)u_{n}\varphi dx+\left( am\left(
\left\Vert u_{n}\right\Vert _{D^{1,2}}^{2}\right) +b\right) \int_{\mathbb{R}%
^{N}}\nabla u_{n}\nabla \varphi dx-\int_{\mathbb{R}^{N}}f(x,u_{n})\varphi dx
\\
&\rightarrow &\int_{\mathbb{R}^{N}}\lambda V(x)u_{0}\varphi dx+(am\left(
A\right) +b)\int_{\mathbb{R}^{N}}\nabla u_{0}\nabla \varphi dx-\int_{\mathbb{%
R}^{N}}f(x,u_{0})\varphi dx
\end{eqnarray*}%
as $n\rightarrow \infty $. This implies that%
\begin{equation}
\int_{\mathbb{R}^{N}}\lambda V(x)u_{0}^{2}dx+\left( am\left( A\right)
+b\right) \left\Vert u_{0}\right\Vert _{D^{1,2}}^{2}-\int_{\mathbb{R}%
^{N}}f(x,u_{0})u_{0}dx=0.  \label{15}
\end{equation}%
Note that%
\begin{equation*}
o(1)=\int_{\mathbb{R}^{N}}\lambda V(x)u_{n}^{2}dx+(am\left( \left\Vert
u_{n}\right\Vert _{D^{1,2}}^{2}\right) +b)\left\Vert u_{n}\right\Vert
_{D^{1,2}}^{2}-\int_{\mathbb{R}^{N}}f(x,u_{n})u_{n}dx.
\end{equation*}%
Combining the above two equalities gives%
\begin{eqnarray}
o\left( 1\right) &=&\int_{\mathbb{R}^{N}}\lambda V(x)v_{n}^{2}dx+am\left(
\left\Vert u_{n}\right\Vert _{D^{1,2}}^{2}\right) \left\Vert
u_{n}\right\Vert _{D^{1,2}}^{2}-am\left( A\right) \left\Vert
u_{0}\right\Vert _{D^{1,2}}^{2}  \notag \\
&&+b\left\Vert v_{n}\right\Vert _{D^{1,2}}^{2}-\int_{\mathbb{R}%
^{N}}f(x,v_{n})v_{n}dx  \notag \\
&=&\int_{\mathbb{R}^{N}}\lambda V(x)v_{n}^{2}dx+am\left( \left\Vert
u_{n}\right\Vert _{D^{1,2}}^{2}\right) \left\Vert u_{n}\right\Vert
_{D^{1,2}}^{2}  \notag \\
&&-am\left( \left\Vert u_{n}\right\Vert _{D^{1,2}}^{2}\right) \left\Vert
u_{0}\right\Vert _{D^{1,2}}^{2}+b\left\Vert v_{n}\right\Vert
_{D^{1,2}}^{2}-\int_{\mathbb{R}^{N}}f(x,v_{n})v_{n}dx  \notag \\
&=&\int_{\mathbb{R}^{N}}\lambda V(x)v_{n}^{2}dx+\left( am\left( \left\Vert
u_{n}\right\Vert _{D^{1,2}}^{2}\right) +b\right) \left\Vert v_{n}\right\Vert
_{D^{1,2}}^{2}-\int_{\mathbb{R}^{N}}f(x,v_{n})v_{n}dx.  \label{20}
\end{eqnarray}%
In addition, it follows from (\ref{15}), conditions $(L1)$ and $(D3)$ that%
\begin{eqnarray*}
J_{\lambda ,a}\left( u_{0}\right) &=&\frac{a}{2}\widehat{m}\left( \left\Vert
u_{0}\right\Vert _{D^{1,2}}^{2}\right) +\frac{1}{2}\left( b\left\Vert
u_{0}\right\Vert _{D^{1,2}}^{2}+\int_{\mathbb{R}^{N}}\lambda
V(x)u_{0}^{2}dx\right) -\int_{\mathbb{R}^{N}}F(x,u_{0})dx \\
&&-\frac{1}{k}\left( \int_{\mathbb{R}^{N}}\lambda V(x)u_{0}^{2}dx+\left(
am\left( A\right) +b\right) \left\Vert u_{0}\right\Vert _{D^{1,2}}^{2}-\int_{%
\mathbb{R}^{N}}f(x,u_{0})u_{0}dx\right) \\
&=&\frac{a}{2}\left[ \widehat{m}\left( \left\Vert u_{0}\right\Vert
_{D^{1,2}}^{2}\right) -\frac{2}{k}m\left( A\right) \left\Vert
u_{0}\right\Vert _{D^{1,2}}^{2}\right] \\
&&+\frac{(k-2)\min \{b,1\}}{2k}\Vert u_{0}\Vert _{\lambda }^{2}+\int_{%
\mathbb{R}^{N}}\left[ \frac{1}{k}f(x,u_{0})u_{0}-F(x,u_{0})\right] dx \\
&\geq &\frac{a}{2}\left[ \widehat{m}\left( \left\Vert u_{0}\right\Vert
_{D^{1,2}}^{2}\right) -\frac{2}{k}m\left( A\right) \left\Vert
u_{0}\right\Vert _{D^{1,2}}^{2}\right] \\
&\geq &\frac{a}{2}\left[ \widehat{m}\left( \left\Vert u_{0}\right\Vert
_{D^{1,2}}^{2}\right) -\frac{2}{k}m\left( \left\Vert u_{0}\right\Vert
_{D^{1,2}}^{2}\right) \left\Vert u_{0}\right\Vert _{D^{1,2}}^{2}\right] \\
&&+\frac{a}{k}\left\Vert u_{0}\right\Vert _{D^{1,2}}^{2}\left[ m\left(
\left\Vert u_{0}\right\Vert _{D^{1,2}}^{2}\right) -m\left( A\right) \right]
\\
&\geq &\frac{a}{k}\left\Vert u_{0}\right\Vert _{D^{1,2}}^{2}\left[ m\left(
\left\Vert u_{0}\right\Vert _{D^{1,2}}^{2}\right) -m\left( A\right) \right] .
\end{eqnarray*}%
Then there exists a constant $K$ satisfying $K=0$ if $m\left( \left\Vert
u_{0}\right\Vert _{D^{1,2}}^{2}\right) \geq m\left( A\right) $ or $K<0$ if $%
m\left( \left\Vert u_{0}\right\Vert _{D^{1,2}}^{2}\right) <m\left( A\right) $
such that%
\begin{equation*}
J_{\lambda ,a}\left( u_{0}\right) \geq K.
\end{equation*}%
Using this, together with (\ref{16}), (\ref{20}), conditions $(L1)$ and $%
(D3) $ leads to%
\begin{eqnarray*}
D-K &\geq &\alpha -J_{a,\lambda }\left( u_{0}\right) =J_{a,\lambda }\left(
u_{n}\right) -J_{a,\lambda }\left( u_{0}\right) +o\left( 1\right) \\
&=&\frac{1}{2}\int_{\mathbb{R}^{N}}\lambda V(x)v_{n}^{2}dx+\frac{a}{2}\left[
\widehat{m}\left( \left\Vert u_{n}\right\Vert _{D^{1,2}}^{2}\right) -%
\widehat{m}\left( \left\Vert u_{0}\right\Vert _{D^{1,2}}^{2}\right) \right]
\\
&&+\frac{b}{2}\left\Vert v_{n}\right\Vert _{D^{1,2}}^{2}-\int_{\mathbb{R}%
^{N}}F(x,v_{n})dx+o\left( 1\right) \\
&=&\frac{k-2}{2k}\left( b\left\Vert v_{n}\right\Vert _{D^{1,2}}^{2}+\int_{%
\mathbb{R}^{N}}\lambda V(x)v_{n}^{2}dx\right) -\int_{\mathbb{R}^{N}}\left(
F(x,v_{n})-\frac{1}{k}f(x,v_{n})v_{n}\right) dx \\
&&+\frac{a}{2}\left[ \widehat{m}\left( \left\Vert u_{n}\right\Vert
_{D^{1,2}}^{2}\right) -\widehat{m}\left( \left\Vert u_{0}\right\Vert
_{D^{1,2}}^{2}\right) -\frac{2}{k}m\left( \left\Vert u_{n}\right\Vert
_{D^{1,2}}^{2}\right) \left\Vert v_{n}\right\Vert _{D^{1,2}}^{2}\right]
+o\left( 1\right) \\
&\geq &\frac{(k-2)\min \left\{ 1,b\right\} }{2k}\left\Vert v_{n}\right\Vert
_{\lambda }^{2}+o(1) \\
&=&\frac{\left( k-2\right) \min \left\{ 1,b\right\} }{2k}\left\Vert
v_{n}\right\Vert _{\lambda }^{2}+o(1),
\end{eqnarray*}%
which implies that there exists a constant $\widetilde{D}=\widetilde{D}(D)>0$
such that
\begin{equation}
\left\Vert v_{n}\right\Vert _{\lambda }^{2}\leq \frac{2k\widetilde{D}}{%
\left( k-2\right) \min \left\{ 1,b\right\} }+o(1).  \label{22}
\end{equation}%
It follows from the (\ref{3.1}), (\ref{17}), (\ref{20}) and (\ref{22}) that%
\begin{eqnarray*}
o\left( 1\right) &=&\int_{\mathbb{R}^{N}}\lambda V(x)v_{n}^{2}dx+am\left(
\left\Vert u_{n}\right\Vert _{D^{1,2}}^{2}\right) \left\Vert
v_{n}\right\Vert _{D^{1,2}}^{2}+b\left\Vert v_{n}\right\Vert
_{D^{1,2}}^{2}-\int_{\mathbb{R}^{N}}f(x,v_{n})v_{n}dx \\
&\geq &\min \{1,b\}\left\Vert v_{n}\right\Vert _{\lambda }^{2}-|q|_{\infty
}\int_{\mathbb{R}^{N}}|v_{n}|^{k}dx \\
&\geq &\min \{1,b\}\left\Vert v_{n}\right\Vert _{\lambda }^{2}-|q|_{\infty
}\Pi _{\lambda ,k}\left\Vert v_{n}\right\Vert _{\lambda }^{k} \\
&\geq &\min \{1,b\}\left\Vert v_{n}\right\Vert _{\lambda }^{2}-|q|_{\infty
}\Pi _{\lambda ,k}\left[ \frac{2k\widetilde{D}}{\left( k-2\right) \min
\left\{ 1,b\right\} }\right] ^{k/2}+o(1),
\end{eqnarray*}%
which implies that there exists $\widetilde{{\Lambda }}:=\widetilde{{\Lambda
}}(a,D)\geq \Lambda _{N}$ such that for $\lambda >\widetilde{{\Lambda }},$%
\begin{equation*}
v_{n}\rightarrow 0\text{ strongly in }X_{\lambda }.
\end{equation*}%
This completes the proof.
\end{proof}

\begin{proposition}
\label{m3} Suppose that $N\geq 3,$ conditions $(V1)-(V2),(L2),(L3)$ with $%
\delta \geq \frac{2}{N-2}$and $(D1)-(D2)$ hold. Then for each $D>0$ there
exists $\widetilde{{\Lambda }}_{1}=\widetilde{{\Lambda }}_{1}(a,D)\geq
\Lambda _{N}>0$ such that $J_{\lambda ,a}$ satisfies the $(C)_{\alpha }$%
--condition in $X_{\lambda }$ for all $\alpha <D$ and $\lambda >\widetilde{{%
\Lambda }}_{1}.$
\end{proposition}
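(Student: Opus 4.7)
The plan is to adapt the proof of Proposition \ref{m2} almost verbatim, with two substitutions: (i) invoke Lemma \ref{lem7} in place of Lemma \ref{lem5} for the boundedness of the Cerami sequence, and (ii) replace every appeal to the integral inequality from $(L1)$ with the consequences of monotonicity from $(L2)$. Concretely, given a $(C)_\alpha$-sequence $\{u_n\}$ with $\alpha<D$, Lemma \ref{lem7} (valid for $\lambda>\widetilde{\Lambda}_0$ and $0<a<\widetilde{a}_\ast$, under $(L3)$ with $\delta\geq\frac{2}{N-2}$) yields a bound $\Vert u_n\Vert_\lambda\leq M_0$; extracting a subsequence produces $u_n\rightharpoonup u_0$ weakly in $X_\lambda$, strongly in $L_{loc}^{r}(\mathbb{R}^{N})$ for $2\leq r<2^{\ast}$, a.e.\ in $\mathbb{R}^N$, and $\Vert u_n\Vert_{D^{1,2}}^2\rightarrow A\leq M_0^2$. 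The H\"{o}lder--Sobolev estimate $\int|v_n|^r\,dx\leq\Pi_{\lambda,r}\Vert v_n\Vert_\lambda^r+o(1)$ (with $\Pi_{\lambda,r}\rightarrow 0$), the limit identity $\int\lambda V u_0^2 + (am(A)+b)\Vert u_0\Vert_{D^{1,2}}^2 = \int f(x,u_0)u_0\,dx$, and the Cerami-type identity (\ref{20}) for $v_n=u_n-u_0$ are derived exactly as in Proposition \ref{m2}; none of these steps uses $(L1)$.

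The two places where $(L1)$ entered the proof of Proposition \ref{m2} are (a) the lower bound $J_{\lambda,a}(u_0)\geq K$ and (b) the inequality $\widehat{m}(\Vert u_n\Vert_{D^{1,2}}^2)-\widehat{m}(\Vert u_0\Vert_{D^{1,2}}^2)-\tfrac{2}{k}m(\Vert u_n\Vert_{D^{1,2}}^2)\Vert v_n\Vert_{D^{1,2}}^2\geq 0$ exploited in the final chain. With only $(L2)$ available, both can be recovered using $M_0$: writing $J_{\lambda,a}(u_0)$ as in Proposition \ref{m2} and bounding it via $\widehat{m}\geq 0$ and $m(A)\leq m(M_0^2)$ gives $J_{\lambda,a}(u_0)\geq -\tfrac{a}{k}m(M_0^2)M_0^2=:K$, while the mean-value estimate $\widehat{m}(\Vert u_n\Vert^2)-\widehat{m}(\Vert u_0\Vert^2)\geq m(\Vert u_0\Vert^2)(\Vert u_n\Vert^2-\Vert u_0\Vert^2)=m(\Vert u_0\Vert^2)\Vert v_n\Vert_{D^{1,2}}^2+o(1)$ combined with $m(\Vert u_n\Vert^2)\leq m(M_0^2)$ yields $-\tfrac{2}{k}m(M_0^2)\Vert v_n\Vert_{D^{1,2}}^2+o(1)$ as a lower bound in (b). Plugging these together with condition $(D2)$ via (\ref{3.11}) into the chain $D-K\geq\alpha-J_{\lambda,a}(u_0)=J_{\lambda,a}(u_n)-J_{\lambda,a}(u_0)+o(1)$ produces
\begin{equation*}
D-K\;\geq\;\Bigl[\tfrac{(k-2)\min\{1,b\}}{2k}-\tfrac{am(M_0^2)}{k}\Bigr]\Vert v_n\Vert_\lambda^2+o(1),
\end{equation*}
so that, after possibly shrinking $\widetilde{a}_\ast$ to keep the bracketed coefficient positive, one obtains a uniform estimate $\Vert v_n\Vert_\lambda^2\leq C'=C'(a,D)$ independent of $\lambda$. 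The proof closes by rewriting (\ref{20}) as $o(1)\geq\min\{b,1\}\Vert v_n\Vert_\lambda^2-|q|_{\infty}\Pi_{\lambda,k}(C')^{k-2}\Vert v_n\Vert_\lambda^2$ (using $am(\Vert u_n\Vert_{D^{1,2}}^2)+b\geq b$, which holds because $m\geq 0$) and selecting $\widetilde{\Lambda}_1\geq\widetilde{\Lambda}_0$ so large that $|q|_{\infty}\Pi_{\lambda,k}(C')^{k-2}<\min\{b,1\}$ for $\lambda>\widetilde{\Lambda}_1$, which is possible since $\Pi_{\lambda,k}\rightarrow 0$.

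The principal obstacle is verifying that $M_0$ can be chosen $\lambda$-uniformly, since otherwise both the lower bound $K$ and the bracketed coefficient above would deteriorate with $\lambda$. A careful re-reading of Lemma \ref{lem7} confirms this: Cases A, B, D rule out $\Vert u_n\Vert_{D^{1,2}}\rightarrow\infty$ once $\Vert u_n\Vert_{D^{1,2}}$ exceeds an explicit constant depending only on $a,m_0,|q|_{\infty},S,k,N,\delta$, while Cases C, E rule out $\int\lambda V u_n^2\rightarrow\infty$ by producing a coefficient bounded below by $\tfrac{1}{2}$ once $\lambda$ exceeds twice the stated threshold, which yields a bound on $\int\lambda V u_n^2$ independent of $\lambda$. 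This $\lambda$-uniform a priori estimate closes the argument and identifies the admissible $\widetilde{\Lambda}_1(a,D)$.
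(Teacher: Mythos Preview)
Your overall strategy---mirror Proposition~\ref{m2}, substitute Lemma~\ref{lem7} for Lemma~\ref{lem5}, and replace appeals to $(L1)$ by consequences of $(L2)$---is exactly what the paper does. The one substantive divergence is in your step~(b), and it costs you generality. By bounding $m(\Vert u_n\Vert_{D^{1,2}}^2)\leq m(M_0^2)$ and then $\Vert v_n\Vert_{D^{1,2}}^2\leq\Vert v_n\Vert_\lambda^2$, you produce a coefficient $\tfrac{(k-2)\min\{1,b\}}{2k}-\tfrac{a}{k}m(M_0^2)$ on $\Vert v_n\Vert_\lambda^2$ and are forced to shrink $a$. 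But the proposition is stated without any upper bound on $a$, and it is invoked in Theorem~\ref{t6} for \emph{every} $a>0$; your version does not cover that.

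The paper sidesteps this cleanly. Rather than controlling $\Vert v_n\Vert_{D^{1,2}}^2$ by $\Vert v_n\Vert_\lambda^2$, it drops $\widehat m(\Vert u_n\Vert_{D^{1,2}}^2)\geq 0$ and uses Brezis--Lieb in the form $\Vert v_n\Vert_{D^{1,2}}^2\leq\Vert u_n\Vert_{D^{1,2}}^2+o(1)$, obtaining
\begin{equation*}
\tfrac{a}{2}\Bigl[\widehat m(\Vert u_n\Vert_{D^{1,2}}^2)-\widehat m(\Vert u_0\Vert_{D^{1,2}}^2)-\tfrac{2}{k}m(\Vert u_n\Vert_{D^{1,2}}^2)\Vert v_n\Vert_{D^{1,2}}^2\Bigr]\geq -\tfrac{a}{2}\widehat m(\Vert u_0\Vert_{D^{1,2}}^2)-\tfrac{a}{k}m(A)\,A+o(1),
\end{equation*}
an \emph{additive} constant rather than a multiplicative correction. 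The coefficient on $\Vert v_n\Vert_\lambda^2$ in the chain therefore stays at $\tfrac{(k-2)\min\{1,b\}}{2k}$ for every $a>0$. For step~(a), the paper does not build $K$ from $M_0$ at all: it simply quotes Lemma~\ref{h0} (global boundedness below of $J_{a,\lambda}$) to get $J_{\lambda,a}(u_0)\geq K$ with $K$ independent of the sequence. Your $\lambda$-uniformity discussion in the last paragraph remains relevant even in the paper's route---one still needs $A$ and $\Vert u_0\Vert_{D^{1,2}}$ bounded independently of $\lambda$ so that the resulting $\overline D$ depends only on $(a,D)$---but the specific worry about the bracketed coefficient, and the attendant shrinking of $\widetilde a_\ast$, disappears.
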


\begin{proof}
The proof is similar to that of Proposition \ref{m2}, in which only some
places are adjusted. Now we briefly verify it. By Lemma \ref{h0}, there
exists a constant $K<0$ such that%
\begin{equation}
J_{\lambda ,a}\left( u_{0}\right) \geq K.  \label{21}
\end{equation}%
It follows from (\ref{16}), (\ref{20}), (\ref{21}), conditions $(L2)$ and $%
(D2)$ that%
\begin{eqnarray*}
D-K &\geq &\alpha -J_{a,\lambda }\left( u_{0}\right) =J_{a,\lambda }\left(
u_{n}\right) -J_{a,\lambda }\left( u_{0}\right) +o\left( 1\right)  \\
&=&\frac{1}{2}\int_{\mathbb{R}^{N}}\lambda V(x)v_{n}^{2}dx+\frac{a}{2}\left[
\widehat{m}\left( \left\Vert u_{n}\right\Vert _{D^{1,2}}^{2}\right) -%
\widehat{m}\left( \left\Vert u_{0}\right\Vert _{D^{1,2}}^{2}\right) \right]
\\
&&+\frac{b}{2}\left\Vert v_{n}\right\Vert _{D^{1,2}}^{2}-\int_{\mathbb{R}%
^{N}}F(x,v_{n})dx+o\left( 1\right)  \\
&=&\frac{k-2}{2k}\left( b\left\Vert v_{n}\right\Vert _{D^{1,2}}^{2}+\int_{%
\mathbb{R}^{N}}\lambda V(x)v_{n}^{2}dx\right) -\int_{\mathbb{R}^{N}}\left(
F(x,v_{n})-\frac{1}{k}f(x,v_{n})v_{n}\right) dx \\
&&+\frac{a}{2}\left[ \widehat{m}\left( \left\Vert u_{n}\right\Vert
_{D^{1,2}}^{2}\right) -\widehat{m}\left( \left\Vert u_{0}\right\Vert
_{D^{1,2}}^{2}\right) -\frac{2}{k}m\left( \left\Vert u_{n}\right\Vert
_{D^{1,2}}^{2}\right) \left\Vert v_{n}\right\Vert _{D^{1,2}}^{2}\right]
+o\left( 1\right)  \\
&\geq &\frac{\left( k-2\right) \min \left\{ 1,b\right\} }{2k}\left\Vert
v_{n}\right\Vert _{\lambda }^{2} \\
&&-\frac{a}{2}\widehat{m}\left( \left\Vert u_{0}\right\Vert
_{D^{1,2}}^{2}\right) -\frac{a}{k}m\left( \left\Vert u_{n}\right\Vert
_{D^{1,2}}^{2}\right) \left\Vert u_{n}\right\Vert _{D^{1,2}}^{2}+o(1) \\
&\geq &\frac{\left( k-2\right) \min \left\{ 1,b\right\} }{2k}\left\Vert
v_{n}\right\Vert _{\lambda }^{2} \\
&&-\frac{a}{2}m\left( \left\Vert u_{0}\right\Vert _{D^{1,2}}^{2}\right)
\left\Vert u_{0}\right\Vert _{D^{1,2}}^{2}-\frac{a}{k}m\left( A^{2}\right)
A^{2}+o(1),
\end{eqnarray*}%
which implies that for each $\lambda >\Lambda _{N}$ there exists a constant $%
\overline{D}=\overline{D}(D)>0$ such that
\begin{equation*}
\left\Vert v_{n}\right\Vert _{\lambda }^{2}\leq \frac{2k\overline{D}}{\left(
k-2\right) \min \left\{ 1,b\right\} }+o(1).
\end{equation*}%
Next, following the argument of Proposition \ref{m2}, we easily arrive at
the conclusion. This completes the proof.
\end{proof}

\begin{theorem}
\label{t9}Suppose that $N\geq 1,$ conditions $(V1)-(V2),(L1)$ and $(D1)-(D2)$
hold. Then for every $a>0$ and $\lambda >\widetilde{{\Lambda }},$ the energy
functional $J_{a,\lambda }$ has a nontrivial critical point $u_{\lambda }\in
X_{\lambda }$ such that $J_{a,\lambda }(u_{\lambda })>0.$
\end{theorem}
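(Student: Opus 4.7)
The plan is to realize $u_{\lambda}$ as the critical point produced by the mountain pass theorem applied at the level $\alpha_{\lambda,a}$, using the geometric preparation in Lemmas \ref{lem1} and \ref{lem3} together with the compactness provided by Proposition \ref{m2}. First I would verify that the hypotheses of Theorem \ref{t7} are in force for $J_{a,\lambda}$ on $X_{\lambda}$: Lemma \ref{lem1} gives $\rho,\eta>0$ such that $\inf_{\|u\|_{\lambda}=\rho}J_{\lambda,a}(u)>\eta$ for every $\lambda\geq\Lambda_{N}$, and Lemma \ref{lem3} furnishes, for any $a>0$ and $\lambda>0$, an element $e\in X$ with $\|e\|_{\lambda}>\rho$ and $J_{\lambda,a}(e)<0$. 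Since $J_{\lambda,a}(0)=0$, the mountain pass geometry holds, so Theorem \ref{t7} yields a Cerami sequence $\{u_{n}\}\subset X_{\lambda}$ with $J_{\lambda,a}(u_{n})\to\alpha_{\lambda,a}$ and $(1+\|u_{n}\|_{\lambda})\|J_{\lambda,a}'(u_{n})\|_{X_{\lambda}^{-1}}\to 0$.

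Next I would localize the level $\alpha_{\lambda,a}$. Following the comparison with the Dirichlet mountain pass level $\alpha_{0,a}(\Omega)$ introduced just before this theorem, one has $0<\eta\leq\alpha_{\lambda,a}\leq\alpha_{0,a}(\Omega)<D_{a}$ uniformly in $\lambda\geq\Lambda_{N}$, where $D_{a}$ depends only on $a$ and $\Omega$. Consequently, with $D:=D_{a}$ fixed, Proposition \ref{m2} produces a threshold $\widetilde{\Lambda}=\widetilde{\Lambda}(a,D_{a})\geq\Lambda_{N}$ such that $J_{\lambda,a}$ satisfies the $(C)_{\alpha}$-condition for every $\alpha<D_{a}$ and every $\lambda>\widetilde{\Lambda}$. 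Since $\alpha_{\lambda,a}<D_{a}$, the Cerami sequence above admits a strongly convergent subsequence $u_{n}\to u_{\lambda}$ in $X_{\lambda}$.

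By continuity of $J_{\lambda,a}$ and $J_{\lambda,a}'$ on $X_{\lambda}$, the limit $u_{\lambda}$ is a critical point of $J_{\lambda,a}$ and satisfies $J_{\lambda,a}(u_{\lambda})=\alpha_{\lambda,a}\geq\eta>0$. In particular $u_{\lambda}\not\equiv 0$, which proves the theorem. I do not expect genuine obstacles here, because all the nontrivial work has already been carried out: the interplay between the nonlocal Kirchhoff term and the inequivalence of the $D^{1,2}$- and $H^{1}$-norms is absorbed in Lemma \ref{lem5} (boundedness of the Cerami sequence under $(L1)$) and in Proposition \ref{m2} (strong convergence via the Brezis--Lieb splitting and the decay $\Pi_{\lambda,r}\to 0$ as $\lambda\to\infty$). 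The only point deserving a bit of care is to make sure that $\widetilde{\Lambda}$ is chosen after fixing $D_{a}$, so that the $(C)_{\alpha_{\lambda,a}}$-condition is indeed applicable at the mountain pass level; this is immediate once one records the $\lambda$-independent upper bound $\alpha_{\lambda,a}<D_{a}$.
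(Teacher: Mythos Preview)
Your proposal is correct and follows essentially the same route as the paper's proof: invoke the mountain pass geometry from Lemmas \ref{lem1} and \ref{lem3} to obtain the Cerami sequence at level $\alpha_{\lambda,a}$, use the $\lambda$-independent bound $\alpha_{\lambda,a}<D_{a}$, and then apply Proposition \ref{m2} to pass to a strongly convergent subsequence whose limit is the desired nontrivial critical point with $J_{\lambda,a}(u_{\lambda})=\alpha_{\lambda,a}>0$. The paper's own proof is just a condensed version of exactly this argument.
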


\begin{proof}
By Proposition \ref{m2} and $0<\eta \leq \alpha _{\lambda ,a}\leq \alpha
_{0,a}\left( \Omega \right) $ for all $\lambda \geq \Lambda _{N}$, $%
J_{\lambda ,a}$ satisfies the $(C)_{\alpha _{\lambda ,a}}$--condition in $%
X_{\lambda }$ for each $a>0$ and $\lambda >\widetilde{{\Lambda }}$. That is,
there exist a subsequence $\left\{ u_{n}\right\} $ and $u_{\lambda }\in
X_{\lambda }$ such that $u_{n}\rightarrow u_{\lambda }$ strongly in $%
X_{\lambda }.$ This implies that $u_{\lambda }$ is a nontrivial critical
point of $J_{\lambda ,a}$ satisfying $J_{\lambda ,a}\left( u_{\lambda
}\right) =\alpha _{\lambda ,a}>0.$
\end{proof}

\begin{theorem}
\label{t10}Suppose that $N\geq 3,$ conditions $(V1)-(V2),(L2),(L3)$ with $%
\delta \geq \frac{2}{N-2}$ and $(D1)-(D2)$ hold. Then there exists a
constant $\widetilde{{\Lambda }}_{2}\geq \max \left\{ \widetilde{{\Lambda }}%
_{0},\widetilde{{\Lambda }}_{1}\right\} $ such that for every $0<a<%
\widetilde{{a}}_{\ast }$ and $\lambda >\widetilde{{\Lambda }}_{2},$ the
energy functional $J_{a,\lambda }$ has a nontrivial critical point $%
u_{a,\lambda }^{+}\in X_{\lambda }$ satisfying $J_{a,\lambda }(u_{a,\lambda
}^{+})>0.$
\end{theorem}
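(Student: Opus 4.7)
The plan is to follow the template already used for Theorem \ref{t9}, but replacing the ingredients derived under $(L1)$ with the analogues proved under $(L2)$--$(L3)$. First I would check the mountain pass geometry. Lemma \ref{lem1} yields $\rho,\eta>0$ with
$\inf\{J_{\lambda,a}(u):\|u\|_\lambda=\rho\}\geq \eta>0$ for every $\lambda\ge \Lambda_N$, and Lemma \ref{lem10} gives a point $e\in X$ with $\|e\|_\lambda>\rho$ and $J_{\lambda,a}(e)<0$ valid for all $0<a<\widetilde a_\ast$ and $\lambda>0$. So the geometric hypotheses of Theorem \ref{t7} hold for every $0<a<\widetilde a_\ast$ and $\lambda\geq \Lambda_N$, providing a Cerami sequence $\{u_n\}\subset X_\lambda$ at level $\alpha_{\lambda,a}$ with $0<\eta\leq \alpha_{\lambda,a}\leq \alpha_{0,a}(\Omega)<D_a$, exactly as in display (\ref{3.5}).

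Next I would secure the two compactness ingredients. Since we are in the $(L2)$--$(L3)$ setting with $\delta\geq \tfrac{2}{N-2}$, the argument from Lemma \ref{lem5} (which relied on $(L1)$) is unavailable, so boundedness of the Cerami sequence is obtained instead from Lemma \ref{lem7}, provided $0<a<\widetilde a_\ast$ and $\lambda>\widetilde\Lambda_0$. Then, taking $D=D_a$ so that $\alpha_{\lambda,a}<D_a$, Proposition \ref{m3} supplies a threshold $\widetilde\Lambda_1=\widetilde\Lambda_1(a,D_a)\geq \Lambda_N$ beyond which $J_{\lambda,a}$ satisfies the $(C)_{\alpha_{\lambda,a}}$-condition. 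Setting
\begin{equation*}
\widetilde\Lambda_2:=\max\{\widetilde\Lambda_0,\widetilde\Lambda_1(a,D_a)\},
\end{equation*}
for every $\lambda>\widetilde\Lambda_2$ and $0<a<\widetilde a_\ast$ both the boundedness and the Palais--Smale type conclusion are in force simultaneously.

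With these in hand, a standard extraction gives a subsequence (still denoted $\{u_n\}$) and a limit $u_{a,\lambda}^{+}\in X_\lambda$ with $u_n\to u_{a,\lambda}^{+}$ strongly in $X_\lambda$. Passing to the limit in (\ref{3.5}) and in the expression (\ref{13}) for $\langle J_{\lambda,a}'(u_n),\cdot\rangle$ yields $J_{\lambda,a}'(u_{a,\lambda}^{+})=0$ and $J_{\lambda,a}(u_{a,\lambda}^{+})=\alpha_{\lambda,a}\geq \eta>0$, so $u_{a,\lambda}^{+}$ is the desired nontrivial critical point.

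The only real difficulty will be in bookkeeping the thresholds: one must verify that $\widetilde\Lambda_1(a,D_a)$ can indeed be chosen uniformly once $a$ is fixed below $\widetilde a_\ast$, and that $D_a$ (the bound on the mountain pass level coming from the Dirichlet problem on $\Omega$) remains finite for every such $a$, which is immediate from the construction of $D_a$ recalled before (\ref{3.5}). No new estimates are required beyond those already established in Lemma \ref{lem7} and Proposition \ref{m3}.
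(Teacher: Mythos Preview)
Your proposal is correct and follows essentially the same route as the paper: the paper's own proof is a one-line reference (``Similar to the argument of Theorem \ref{t9}, by Proposition \ref{m3} we easily arrive at the conclusion''), and you have simply unpacked that reference, correctly swapping in Lemma \ref{lem10} and Lemma \ref{lem7} for the $(L1)$-based Lemmas \ref{lem3} and \ref{lem5}, and invoking Proposition \ref{m3} in place of Proposition \ref{m2}. The bookkeeping with $\widetilde\Lambda_2=\max\{\widetilde\Lambda_0,\widetilde\Lambda_1(a,D_a)\}$ matches the threshold stated in the theorem.
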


\begin{proof}
Similar to the argument of Theorem \ref{t9}, by Proposition \ref{m3} we
easily arrive at the conclusion.
\end{proof}

\begin{lemma}
\label{h0}Suppose that $N\geq 3$, conditions $(V1)-{(V2)},(L3)$ with $\delta
\geq \frac{2}{N-2}$ and $(D1)-{(D2)}$ hold. Then the energy functional $%
J_{a,\lambda }$ is bounded below on $X_{\lambda }$ for all $a>0$ and%
\begin{equation*}
\lambda >\widetilde{{\Lambda }}_{3}:=\left\{
\begin{array}{ll}
\max \left\{ \Lambda _{N},\frac{2|q|_{\infty }}{c_{0}k}\left( \frac{2(\delta
+1)|q|_{\infty }}{m_{0}akS^{2^{\ast }}}\right) ^{\frac{k-2}{2^{\ast }-k}%
}\right\} & \text{ if }\delta =\frac{2}{N-2}, \\
\Lambda _{N} & \text{ if }\delta >\frac{2}{N-2}.%
\end{array}%
\right.
\end{equation*}%
Furthermore, if
\begin{equation*}
\lambda >\widetilde{{\Lambda }}_{4}:=\max \left\{ \widetilde{{\Lambda }}_{3},%
\frac{2|q|_{\infty }\left( 2^{\ast }-k\right) }{c_{0}k(2^{\ast }-2)}\right\}
,
\end{equation*}%
then there exists $\widetilde{{R}}_{a}>T_{0}^{1/2}$ such that%
\begin{equation*}
J_{a,\lambda }(u)\geq 0\text{ for all }u\in X_{\lambda }\text{ with }%
\left\Vert u\right\Vert _{\lambda }\geq \widetilde{{R}}_{a}.
\end{equation*}
\end{lemma}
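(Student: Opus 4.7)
The strategy is to bound $J_{a,\lambda}(u)$ from below by combining the coercive lower bound on $\widehat m$ given by $(L3)$, the pointwise estimate $F(x,u)\leq \tfrac{|q|_\infty}{k}|u|^k$ coming from $(D1)$--$(D2)$, and an interpolation-plus-Young estimate of $\int_{\mathbb{R}^N}|u|^k\,dx$ in terms of $\|u\|_{D^{1,2}}^{2^*}$, $\|u\|_{D^{1,2}}^2$, and $\int_{\mathbb{R}^N}\lambda V u^2\,dx$.

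First, a direct integration of $(L3)$ produces a constant $C_*\geq 0$ depending only on $m,m_0,\delta,T_0$ such that
\[
\widehat m(t) \geq \tfrac{m_0}{\delta+1}\,t^{\delta+1}-C_* \quad\text{for every } t\geq 0,
\]
so $\tfrac{a}{2}\widehat m(\|u\|_{D^{1,2}}^2)\geq \tfrac{am_0}{2(\delta+1)}\|u\|_{D^{1,2}}^{2(\delta+1)}-\tfrac{aC_*}{2}$. Next, interpolating $L^k$ between $L^2$ and $L^{2^*}$, then applying Young's inequality with a free parameter $\tau>0$, yields
\[
\int_{\mathbb{R}^N}|u|^k\,dx \leq \frac{(k-2)\,\tau^{(2^*-2)/(k-2)}}{(2^*-2)S^{2^*}}\|u\|_{D^{1,2}}^{2^*}+\frac{2^*-k}{(2^*-2)\,\tau^{(2^*-2)/(2^*-k)}}\int_{\mathbb{R}^N}u^2\,dx,
\]
and inserting the $(V1)$-decomposition $\int u^2\,dx\leq \tfrac{1}{\lambda c_0}\int \lambda V u^2\,dx+|\{V<c_0\}|^{2/N}S^{-2}\|u\|_{D^{1,2}}^2$ converts this into an upper bound with the three desired pieces.

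Substituting into $J_{a,\lambda}(u)$ I would split into the two subcases predicted by the statement. When $\delta=\tfrac{2}{N-2}$ one has $2(\delta+1)=2^*$, so the $\widehat m$-term and the $\|u\|_{D^{1,2}}^{2^*}$-part of the nonlinearity compete on the same footing. I fix $\tau$ so that the latter is at most half of the former, giving $\tau^{(2^*-2)/(k-2)}\sim am_0k(2^*-2)S^{2^*}/\bigl(2\cdot 2^*|q|_\infty(k-2)\bigr)$; the surviving positive half of $\tfrac{am_0}{2\cdot 2^*}\|u\|_{D^{1,2}}^{2^*}$ absorbs the $\|u\|_{D^{1,2}}^2$ contribution via a second Young step with a constant, and the requirement that the residual coefficient of $\int \lambda V u^2\,dx$ be nonnegative, namely $\tfrac{1}{2}\geq \tfrac{|q|_\infty(2^*-k)}{k(2^*-2)\lambda c_0}\tau^{-(2^*-2)/(2^*-k)}$, rearranges into precisely the $\delta=\tfrac{2}{N-2}$ branch of $\widetilde\Lambda_3$; the exponent $\tfrac{k-2}{2^*-k}$ is produced by inverting the Young exponent $\tfrac{2^*-2}{2^*-k}$ in the course of this rearrangement. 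When $\delta>\tfrac{2}{N-2}$ we have $2(\delta+1)>2^*$, so a single Young's inequality $c\|u\|_{D^{1,2}}^{2^*}\leq \tfrac{am_0}{4(\delta+1)}\|u\|_{D^{1,2}}^{2(\delta+1)}+C$ swallows the $\|u\|_{D^{1,2}}^{2^*}$ piece outright while leaving the remaining $\int \lambda V u^2\,dx$ and $\|u\|_{D^{1,2}}^2$ contributions with positive coefficients merely under $\lambda\geq \Lambda_N$, matching the claim $\widetilde\Lambda_3=\Lambda_N$ for that case.

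For the final assertion I would strengthen $\lambda>\tfrac{2|q|_\infty(2^*-k)}{c_0k(2^*-2)}$, which makes the $\int \lambda V u^2\,dx$ coefficient coming from the un-tweaked (i.e.\ $\tau=1$) version of the estimate strictly positive. Together with the $\alpha\|u\|_{D^{1,2}}^{2(\delta+1)}$ piece already produced, this yields a bound of the shape $J_{a,\lambda}(u)\geq \alpha\|u\|_{D^{1,2}}^{2(\delta+1)}+\beta\int \lambda V u^2\,dx-C(a)$ with $\alpha,\beta>0$; any $u$ with $\|u\|_\lambda$ large forces either $\|u\|_{D^{1,2}}\to\infty$ or $\int \lambda V u^2\,dx\to\infty$, and each term separately drives $J_{a,\lambda}(u)\to+\infty$, giving the threshold $\widetilde R_a>T_0^{1/2}$. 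The one genuine difficulty is the bookkeeping: tracking the Young parameter $\tau$ through the algebra so that the $\delta=\tfrac{2}{N-2}$ threshold comes out with the precise exponent $\tfrac{k-2}{2^*-k}$ on the $(am_0kS^{2^*})^{-1}$ factor stated for $\widetilde\Lambda_3$, rather than its reciprocal; all remaining steps are routine once $\tau$ has been fixed.
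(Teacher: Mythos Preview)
Your proposal is essentially correct but takes a different route from the paper for the boundedness-below part. The paper does not use a parametric Young inequality; instead, in the critical case $\delta=\tfrac{2}{N-2}$ (so $2(\delta+1)=2^*$) it performs a hard dichotomy on whether
\[
\int_{\mathbb{R}^N}\lambda V(x)u^2\,dx \ \gtrless\ \lambda c_0 S^{-2^*}\Bigl(\tfrac{2|q|_\infty}{k\lambda c_0}\Bigr)^{\frac{2^*-2}{k-2}}\|u\|_{D^{1,2}}^{2^*}.
\]
In the ``large'' branch the potential term itself cancels the mixed piece of the interpolation estimate, leaving a pure polynomial $\tfrac{m_0a}{2^*}\|u\|_{D^{1,2}}^{2^*}+\tfrac{b}{2}\|u\|_{D^{1,2}}^{2}-c\|u\|_{D^{1,2}}^{k}$; in the ``small'' branch the hypothesis is substituted back into the interpolation bound and yields the $\lambda$-threshold. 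Your parametric-Young scheme packages this same competition more compactly and does recover the correct exponent $\tfrac{k-2}{2^*-k}$ once $\tau$ is unwound; it also lets you avoid the paper's separate treatment of $\|u\|_{D^{1,2}}<T_0^{1/2}$ via the additive constant $C_*$.

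For the second assertion your argument has a small muddle. You set $\tau=1$ to make $\beta>0$ and then invoke ``the $\alpha\|u\|_{D^{1,2}}^{2(\delta+1)}$ piece already produced,'' but that $\alpha>0$ came from a \emph{different} $\tau$; a single application of Young with $\tau=1$ does not give $\alpha>0$ when $\delta=\tfrac{2}{N-2}$ and $a$ is small. What the paper does (and what makes the extra threshold $\tfrac{2|q|_\infty(2^*-k)}{c_0k(2^*-2)}$ appear) is a two-regime argument: the first part already produces a radius $\overline R_a$ in the $D^{1,2}$-norm beyond which $J_{a,\lambda}(u)\geq 0$; then on the region $\|u\|_{D^{1,2}}<\overline R_a$ the $\tau=1$ Young estimate is used with $\|u\|_{D^{1,2}}$ bounded, so the $\|u\|_{D^{1,2}}^{2^*}$ and $\|u\|_{D^{1,2}}^{2}$ contributions become absolute constants and $J_{a,\lambda}(u)\geq 0$ follows once $\int\lambda V u^2\,dx$ is large enough. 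Gluing the two regimes yields $\widetilde R_a$ in the $\|\cdot\|_\lambda$-norm. Alternatively, you can observe that your own first-part bound with the optimized $\tau$ already gives both $\alpha>0$ and $\beta>0$ as soon as $\lambda>\widetilde\Lambda_3$ strictly, so a separate $\tau=1$ step is in fact unnecessary in your framework.
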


\begin{proof}
If $\left\Vert u\right\Vert _{D^{1,2}}<T_{0}^{1/2},$ then by (\ref{10}), (%
\ref{3.1}) and the Young inequality one has%
\begin{eqnarray*}
J_{a,\lambda }\left( u\right) &\geq &\frac{\min \{b,1\}}{2}\left\Vert
u\right\Vert _{\lambda }^{2}-\frac{|q|_{\infty }}{k}\int_{\mathbb{R}%
^{N}}|u|^{k}dx \\
&\geq &\frac{\min \{b,1\}}{2}\left\Vert u\right\Vert _{\lambda }^{2}-\frac{%
|q|_{\infty }}{kS^{k}}\left\vert \left\{ V<c_{0}\right\} \right\vert
^{\left( 2^{\ast }-k\right) /2^{\ast }}\left\Vert u\right\Vert _{\lambda }^{%
\frac{2(2^{\ast }-k)}{2^{\ast }-2}}\left\Vert u\right\Vert _{D^{1,2}}^{\frac{%
2^{\ast }(k-2)}{2^{\ast }-2}} \\
&\geq &\frac{2^{\ast }(k-2)\min \{b,1\}}{2p(2^{\ast }-2)}\left\Vert
u\right\Vert _{\lambda }^{2} \\
&&-\frac{k-2}{k(2^{\ast }-2)\left( \min \{b,1\}\right) ^{\frac{2^{\ast }-k}{%
k-2}}}\left( |q|_{\infty }S^{-k}\left\vert \left\{ V<c_{0}\right\}
\right\vert ^{(2^{\ast }-k)/2^{\ast }}\right) \left\Vert u\right\Vert
_{D^{1,2}}^{2^{\ast }} \\
&\geq &\frac{2^{\ast }(k-2)\min \{b,1\}}{2p(2^{\ast }-2)}\left\Vert
u\right\Vert _{\lambda }^{2} \\
&&-\frac{k-2}{k(2^{\ast }-2)\left( \min \{b,1\}\right) ^{\frac{2^{\ast }-k}{%
k-2}}}\left( |q|_{\infty }S^{-k}\left\vert \left\{ V<c\right\} \right\vert
^{(2^{\ast }-k)/2^{\ast }}\right) ^{\frac{2^{\ast }-2}{k-2}}T_{0}^{2^{\ast
}/2},
\end{eqnarray*}%
which shows that $J_{a,\lambda }$ is bounded below on $X_{\lambda }$ for all
$a>0$ and $\lambda >\Lambda _{N}.$

If $\left\Vert u\right\Vert _{D^{1,2}}\geq T_{0}^{1/2},$ then we consider
two cases as follows:

$\left( i\right) $ $\delta =\frac{2}{N-2}$ and $\left\Vert u\right\Vert
_{D^{1,2}}\geq T_{0}^{1/2}:$ The argument is also divided into two seperate
cases:\newline
Case $A:\int_{\mathbb{R}^{N}}\lambda V(x)u^{2}dx\geq \lambda
c_{0}S^{-2^{\ast }}\left( \frac{2|q|_{\infty }}{k\lambda c_{0}}\right) ^{%
\frac{2^{\ast }-2}{k-2}}\left\Vert u\right\Vert _{D^{1,2}}^{2^{\ast }}.$ By
condition $(L3)$, (\ref{10}), (\ref{12}) and (\ref{3.2}), for each $\lambda
>0$ one has%
\begin{eqnarray*}
&&J_{a,\lambda }\left( u\right)  \\
&\geq &\frac{a}{2}\widehat{m}\left( \left\Vert u\right\Vert
_{D^{1,2}}^{2}\right) +\frac{1}{2}\left( b\left\Vert u\right\Vert
_{D^{1,2}}^{2}+\int_{\mathbb{R}^{N}}\lambda V(x)u^{2}dx\right) -\frac{%
|q|_{\infty }}{k}\int_{\mathbb{R}^{N}}|u|^{k}dx \\
&\geq &\frac{m_{0}a}{2(\delta +1)}\left\Vert u\right\Vert
_{D^{1,2}}^{2(\delta +1)}+\frac{1}{2}\left( b\left\Vert u\right\Vert
_{D^{1,2}}^{2}+\int_{\mathbb{R}^{N}}\lambda V(x)u^{2}dx\right)  \\
&&-\frac{|q|_{\infty }}{k}\left( \frac{1}{\lambda c_{0}}\int_{\mathbb{R}%
^{N}}\lambda V(x)u^{2}dx+S^{-2}\left\vert \left\{ V<c_{0}\right\}
\right\vert ^{2/N}\left\Vert u\right\Vert _{D^{1,2}}^{2}\right) ^{\frac{%
2^{\ast }-k}{2^{\ast }-2}}\left( S^{-1}\left\Vert u\right\Vert
_{D^{1,2}}\right) ^{\frac{2^{\ast }(k-2)}{2^{\ast }-2}} \\
&\geq &\frac{m_{0}a}{2^{\ast }}\left\Vert u\right\Vert _{D^{1,2}}^{2^{\ast
}}+\frac{b}{2}\left\Vert u\right\Vert _{D^{1,2}}^{2}-\frac{|q|_{\infty
}\left\vert \left\{ V<c_{0}\right\} \right\vert ^{1-k/2^{\ast }}}{kS^{k}}%
\left\Vert u\right\Vert _{D^{1,2}}^{k} \\
&&+\frac{1}{2}\left( \int_{\mathbb{R}^{N}}\lambda V(x)u^{2}dx\right) ^{\frac{%
2^{\ast }-k}{2^{\ast }-2}}\left[ \left( \int_{\mathbb{R}^{N}}\lambda
V(x)u^{2}dx\right) ^{\frac{k-2}{2^{\ast }-2}}-\frac{2|q|_{\infty }}{k\left(
\lambda c_{0}\right) ^{\frac{2^{\ast }-k}{2^{\ast }-2}}}\left(
S^{-1}\left\Vert u\right\Vert _{D^{1,2}}\right) ^{\frac{2^{\ast }(k-2)}{%
2^{\ast }-2}}\right]  \\
&\geq &\frac{m_{0}a}{2^{\ast }}\left\Vert u\right\Vert _{D^{1,2}}^{2^{\ast
}}+\frac{b}{2}\left\Vert u\right\Vert _{D^{1,2}}^{2}-\frac{|q|_{\infty
}\left\vert \left\{ V<c_{0}\right\} \right\vert ^{1-k/2^{\ast }}}{kS^{k}}%
\left\Vert u\right\Vert _{D^{1,2}}^{k},
\end{eqnarray*}%
where we have used the Young and Sobolev inequalities. This implies that $%
J_{a,\lambda }\left( u\right) $ is bounded below on $X$ for all $a>0$ and $%
\lambda >\Lambda _{N}.$

Case $B:\int_{\mathbb{R}^{N}}\lambda V(x)u^{2}dx<\lambda c_{0}S^{-2^{\ast
}}\left( \frac{2|q|_{\infty }}{k\lambda c_{0}}\right) ^{\frac{2^{\ast }-2}{%
k-2}}\left\Vert u\right\Vert _{D^{1,2}}^{2^{\ast }}.$ It follows from (\ref%
{10}) that%
\begin{eqnarray*}
&&\int_{\mathbb{R}^{N}}|u|^{k}dx \\
&\leq &\left( \frac{1}{\lambda c_{0}}\int_{\mathbb{R}^{N}}\lambda
V(x)u^{2}dx+S^{-2}\left\vert \left\{ V<c_{0}\right\} \right\vert
^{2/N}\left\Vert u\right\Vert _{D^{1,2}}^{2}\right) ^{\frac{2^{\ast }-k}{%
2^{\ast }-2}}\left( S^{-1}\left\Vert u\right\Vert _{D^{1,2}}\right) ^{\frac{%
2^{\ast }(k-2)}{2^{\ast }-2}} \\
&\leq &\left[ S^{-2^{\ast }}\left( \frac{2|q|_{\infty }}{k\lambda c_{0}}%
\right) ^{\frac{2^{\ast }-2}{k-2}}\left\Vert u\right\Vert
_{D^{1,2}}^{2^{\ast }}+S^{-2}\left\vert \left\{ V<c_{0}\right\} \right\vert
^{2/N}\left\Vert u\right\Vert _{D^{1,2}}^{2}\right] ^{\frac{2^{\ast }-k}{%
2^{\ast }-2}}\left( S^{-1}\left\Vert u\right\Vert _{D^{1,2}}\right) ^{\frac{%
2^{\ast }(k-2)}{2^{\ast }-2}} \\
&\leq &S^{-2^{\ast }}\left( \frac{2|q|_{\infty }}{k\lambda c_{0}}\right) ^{%
\frac{2^{\ast }-2}{k-2}}\left\Vert u\right\Vert _{D^{1,2}}^{2^{\ast }}+S^{-%
\frac{2^{\ast }(k-2)}{2^{\ast }-2}}\left( S^{-2}\left\vert \left\{
V<c_{0}\right\} \right\vert ^{2/N}\right) ^{\frac{2^{\ast }-k}{2^{\ast }-2}%
}\left\Vert u\right\Vert _{D^{1,2}}^{k} \\
&=&S^{-2^{\ast }}\left( \frac{2|q|_{\infty }}{k\lambda c_{0}}\right) ^{\frac{%
2^{\ast }-2}{k-2}}\left\Vert u\right\Vert _{D^{1,2}}^{2^{\ast
}}+S^{-k}\left\vert \left\{ V<c\right\} \right\vert ^{\frac{2^{\ast }-k}{%
2^{\ast }}}\left\Vert u\right\Vert _{D^{1,2}}^{k}.
\end{eqnarray*}%
Using this, together with condition $(L3)$ once again, yields%
\begin{eqnarray*}
J_{a,\lambda }\left( u\right)  &\geq &\frac{a}{2}\widehat{m}\left(
\left\Vert u\right\Vert _{D^{1,2}}^{2}\right) +\frac{1}{2}\left( b\left\Vert
u\right\Vert _{D^{1,2}}^{2}+\int_{\mathbb{R}^{N}}\lambda V(x)u^{2}dx\right) -%
\frac{|q|_{\infty }}{k}\int_{\mathbb{R}^{N}}|u|^{k}dx \\
&\geq &\frac{m_{0}a}{2(\delta +1)}\left\Vert u\right\Vert
_{D^{1,2}}^{2(\delta +1)}+\frac{1}{2}\left( b\left\Vert u\right\Vert
_{D^{1,2}}^{2}+\int_{\mathbb{R}^{N}}\lambda V(x)u^{2}dx\right)  \\
&&-\frac{|q|_{\infty }}{k}\left[ S^{-2^{\ast }}\left( \frac{2||_{\infty }}{%
k\lambda c_{0}}\right) ^{\frac{2^{\ast }-k}{k-2}}\left\Vert u\right\Vert
_{D^{1,2}}^{2^{\ast }}+S^{-k}\left\vert \left\{ V<c_{0}\right\} \right\vert
^{\frac{2^{\ast }-k}{2^{\ast }}}\left\Vert u\right\Vert _{D^{1,2}}^{k}\right]
\\
&\geq &\left[ \frac{m_{0}a}{2(\delta +1)}-\frac{|q|_{\infty }}{kS^{2^{\ast }}%
}\left( \frac{2|q|_{\infty }}{p\lambda c_{0}}\right) ^{\frac{2^{\ast }-k}{k-2%
}}\right] \left\Vert u\right\Vert _{D^{1,2}}^{2^{\ast }} \\
&&-\frac{|q|_{\infty }}{kS^{2^{\ast }}}\left\vert \left\{ V<c_{0}\right\}
\right\vert ^{\frac{2^{\ast }-k}{2^{\ast }}}\left\Vert u\right\Vert
_{D^{1,2}}^{k}.
\end{eqnarray*}%
This shows that if%
\begin{equation*}
\lambda >\max \left\{ \Lambda _{N},\frac{2|q|_{\infty }}{kc_{0}}\left( \frac{%
2(\delta +1)|q|_{\infty }}{m_{0}akS^{2^{\ast }}}\right) ^{\frac{k-2}{2^{\ast
}-k}}\right\} ,
\end{equation*}%
then $J_{a,\lambda }$ is bounded below on $X_{\lambda }$ for all $a>0$ and
there exists $R_{a}>0$ such that
\begin{equation*}
J_{a,\lambda }(u)\geq 0\text{ for all }u\in X_{\lambda }\text{ with }%
\left\Vert u\right\Vert _{D^{1,2}}\geq R_{a}.
\end{equation*}%
$\left( ii\right) $ $\delta >\frac{2}{N-2}$ and $\left\Vert u\right\Vert
_{D^{1,2}}\geq T_{0}^{1/2}:$ It follows from condition $(L3)$, (\ref{10}), (%
\ref{3.1}) and the Young inequality that%
\begin{eqnarray*}
J_{a,\lambda }\left( u\right)  &=&\frac{a}{2}\widehat{m}\left( \left\Vert
u\right\Vert _{D^{1,2}}^{2}\right) +\frac{1}{2}\left( b\left\Vert
u\right\Vert _{D^{1,2}}^{2}+\int_{\mathbb{R}^{N}}\lambda V(x)u^{2}dx\right)
-\int_{\mathbb{R}^{N}}F(x,u)dx \\
&\geq &\frac{m_{0}a}{2(\delta +1)}\left\Vert u\right\Vert
_{D^{1,2}}^{2(\delta +1)}+\frac{\min \{b,1\}}{2}\left\Vert u\right\Vert
_{\lambda }^{2}-\frac{|q|_{\infty }}{k}\int_{\mathbb{R}^{N}}|u|^{k}dx \\
&\geq &\frac{m_{0}a}{2(\delta +1)}\left\Vert u\right\Vert
_{D^{1,2}}^{2(\delta +1)}+\frac{\min \{b,1\}}{2}\left\Vert u\right\Vert
_{\lambda }^{2} \\
&&-\frac{|q|_{\infty }}{k}S^{-k}\left\vert \left\{ V<c_{0}\right\}
\right\vert ^{\left( 2^{\ast }-k\right) /2^{\ast }}\left\Vert u\right\Vert
_{\lambda }^{\frac{2(2^{\ast }-k)}{2^{\ast }-2}}\left\Vert u\right\Vert
_{D^{1,2}}^{\frac{2^{\ast }(k-2)}{2^{\ast }-2}} \\
&\geq &\frac{m_{0}a}{2(\delta +1)}\left\Vert u\right\Vert
_{D^{1,2}}^{2(\delta +1)}+\min \{b,1\}\frac{2^{\ast }(k-2)}{2p(2^{\ast }-2)}%
\left\Vert u\right\Vert _{\lambda }^{2} \\
&&-\frac{k-2}{k(2^{\ast }-2)\left( \min \{b,1\}\right) ^{\frac{2^{\ast }-k}{%
k-2}}}\left( |q|_{\infty }S^{-k}\left\vert \left\{ V<c_{0}\right\}
\right\vert ^{(2^{\ast }-k)/2^{\ast }}\right) \left\Vert u\right\Vert
_{D^{1,2}}^{2^{\ast }} \\
&\geq &\frac{m_{0}a}{2(\delta +1)}\left\Vert u\right\Vert
_{D^{1,2}}^{2(\delta +1)} \\
&&-\frac{k-2}{k(2^{\ast }-2)\left( \min \{b,1\}\right) ^{\frac{2^{\ast }-k}{%
k-2}}}\left( |q|_{\infty }S^{-k}\left\vert \left\{ V<c\right\} \right\vert
^{(2^{\ast }-k)/2^{\ast }}\right) ^{\frac{2^{\ast }-2}{k-2}}\left\Vert
u\right\Vert _{D^{1,2}}^{2^{\ast }},
\end{eqnarray*}%
which implies that $J_{a,\lambda }\left( u\right) $ is bounded below on $X$
for all $a>0$ and $\lambda >\Lambda _{N},$ since $\delta >\frac{2}{N-2}.$
Moreover, for every $a>0,$ there exists%
\begin{equation*}
R_{a}>t_{\overline{B}}:=\left[ \frac{2(\delta +1)(k-2)\left( |q|_{\infty
}S^{-k}\left\vert \left\{ V<c_{0}\right\} \right\vert ^{(2^{\ast
}-k)/2^{\ast }}\right) ^{\frac{2^{\ast }-2}{k-2}}}{km_{0}a(2^{\ast
}-2)\left( \min \{b,1\}\right) ^{\frac{2^{\ast }-k}{k-2}}}\right] ^{\frac{1}{%
2\delta +2-2^{\ast }}}
\end{equation*}%
such that%
\begin{equation*}
J_{a,\lambda }(u)\geq 0\text{ for all }u\in X_{\lambda }\text{ with }%
\left\Vert u\right\Vert _{D^{1,2}}\geq \overline{R}_{a}=\max \left\{
T_{0}^{1/2},R_{a}\right\} .
\end{equation*}

Next, we show that there exists a constant $\widehat{R}_{a}>\overline{R}_{a}$
such that%
\begin{equation*}
J_{a,\lambda }(u)\geq 0\text{ for all }u\in X_{\lambda }\text{ with }%
\left\Vert u\right\Vert _{\lambda }\geq \widehat{R}_{a}.
\end{equation*}%
Let
\begin{equation}
\widetilde{{R}}_{a}=\left[ \overline{R}_{a}^{2}+2A_{0}\left( \overline{R}%
_{a}\right) \left( 1-\frac{1}{\lambda c_{0}}\left( \frac{2\left( 2^{\ast
}-k\right) |q|_{\infty }}{k(2^{\ast }-2)}\right) \right) ^{-1}\right] ^{1/2},
\label{14-10}
\end{equation}%
where
\begin{equation*}
A_{0}\left( \overline{R}_{a}\right) =\frac{\left( 2^{\ast }-k\right)
|q|_{\infty }}{k(2^{\ast }-2)}\left\vert \left\{ V<c_{0}\right\} \right\vert
^{2/N}S^{-2}\overline{R}_{a}^{2}+\frac{(k-2)|q|_{\infty }}{k(2^{\ast }-2)}%
S^{-2^{\ast }}\overline{R}_{a}^{2^{\ast }}.
\end{equation*}%
For $u\in X_{\lambda }$ with $\left\Vert u\right\Vert _{\lambda }\geq
\widetilde{{R}}_{a}.$ If $\left\Vert u\right\Vert _{D^{1,2}}\geq \overline{R}%
_{a},$ then the result holds clearly. If $T_{0}^{1/2}\leq \left\Vert
u\right\Vert _{D^{1,2}}<\overline{R}_{a},$ then it is enough to indicate
that $J_{a,\lambda }(u)\geq 0$ when
\begin{equation*}
\int_{\mathbb{R}^{N}}\lambda V(x)u^{2}dx\geq 2A_{0}\left( \overline{R}%
_{a}\right) \left( 1-\frac{2\left( 2^{\ast }-k\right) |q|_{\infty }}{\lambda
c_{0}k(2^{\ast }-2)}\right) ^{-1}.
\end{equation*}%
Indeed, by (\ref{14-7}) we deduce that%
\begin{eqnarray*}
J_{a,\lambda }(u) &\geq &\frac{a}{2}\widehat{m}\left( \left\Vert
u\right\Vert _{D^{1,2}}^{2}\right) +\frac{1}{2}\left( b\left\Vert
u\right\Vert _{D^{1,2}}^{2}+\int_{\mathbb{R}^{N}}\lambda V(x)u^{2}dx\right) -%
\frac{|q|_{\infty }}{k}\int_{\mathbb{R}^{N}}|u|^{k}dx \\
&\geq &\frac{1}{2}\int_{\mathbb{R}^{N}}\lambda V(x)u^{2}dx-\frac{%
(k-2)|q|_{\infty }}{(2^{\ast }-2)k}S^{-2^{\ast }}\overline{R}_{a}^{2^{\ast }}
\\
&&-\frac{\left( 2^{\ast }-p\right) |q|_{\infty }}{k(2^{\ast }-2)}\left(
\frac{1}{\lambda c_{0}}\int_{\mathbb{R}^{N}}\lambda V(x)u^{2}dx+\left\vert
\left\{ V<c_{0}\right\} \right\vert ^{2/N}S^{-2}\overline{R}_{a}^{2}\right)
\\
&\geq &\frac{1}{2}\left[ 1-\frac{2\left( 2^{\ast }-k\right) |q|_{\infty }}{%
\lambda c_{0}k(2^{\ast }-2)}\right] \int_{\mathbb{R}^{N}}\lambda
V(x)u^{2}dx-A_{0}\left( \overline{R}_{a}\right) \\
&\geq &0.
\end{eqnarray*}%
Hence, we obtain that there exists a constant $\widetilde{{R}}_{a}>0$
defined as (\ref{14-10}) such that%
\begin{equation*}
J_{a,\lambda }(u)>0\text{ for all }u\in X_{\lambda }\text{ with }\left\Vert
u\right\Vert _{\lambda }\geq \widetilde{{R}}_{a}.
\end{equation*}%
This completes the proof.
\end{proof}

\begin{lemma}
\label{lem8}Suppose that $N\geq 3,$ conditions $(V1)-(V2),(L3)$ with $\delta
\geq \frac{2}{N-2}$ and $(D1)-(D2)$ hold. Then for every $a>0$ and $\lambda >%
\widetilde{{\Lambda }}_{4}$ one has
\begin{equation*}
\widetilde{{\theta }}_{a}=:\inf \left\{ J_{a,\lambda }(u):u\in X_{\lambda }%
\text{ with }\left\Vert u\right\Vert _{\lambda }<\widetilde{{R}}_{a}\right\}
<0.  \label{3.8}
\end{equation*}
\end{lemma}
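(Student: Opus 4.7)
The plan is to combine Lemma~\ref{h0} with an explicit construction. By Lemma~\ref{h0}, whenever $\lambda>\widetilde{{\Lambda}}_{4}$, every $u\in X_\lambda$ with $\|u\|_\lambda\ge \widetilde{{R}}_{a}$ satisfies $J_{a,\lambda}(u)\ge 0$. Consequently, any $u_{0}\in X_\lambda$ for which $J_{a,\lambda}(u_{0})<0$ automatically lies in the admissible set $\{\|u\|_\lambda<\widetilde{{R}}_{a}\}$, and therefore $\widetilde{{\theta}}_{a}\le J_{a,\lambda}(u_{0})<0$. The whole task thus reduces to exhibiting one element at which $J_{a,\lambda}$ is strictly negative.

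To produce such a test function, I would take $\phi\in X\setminus\{0\}$ with $\phi>0$. Using $(V2)$ and the condition $q\not\equiv 0$ on $\overline{\Omega}$ built into $(D1)$, one may pick $\phi\in C_{c}^{\infty}(\Omega)$ with $\int q(x)\phi^{k}dx>0$; because $V\equiv 0$ on $\overline{\Omega}$, the $\lambda$--norm and the $D^{1,2}$--norm of $\phi$ coincide. Set $g(t)=J_{a,\lambda}(t\phi)$. For $t$ in the window where $t^{2}\|\phi\|_{D^{1,2}}^{2}\le T_{0}$, the continuity of $m$ on $[0,T_{0}]$ yields the upper estimate
\begin{equation*}
g(t)\le \tfrac{1}{2}(aM+b)\,t^{2}\|\phi\|_{D^{1,2}}^{2}-\int_{\mathbb{R}^{N}} F(x,t\phi)\,dx,\qquad M:=\max_{[0,T_{0}]} m.
\end{equation*}
On the other hand, $(D1)$--$(D2)$ and dominated convergence give $t^{-k}\int F(x,t\phi)dx\to \tfrac{1}{k}\int q\phi^{k}dx$ as $t\to\infty$. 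Since $k>2$, the nonlinear term overtakes the quadratic one for $t$ large enough, and as soon as the ratio $\|\phi\|_{D^{1,2}}^{k}/\int q\phi^{k}dx$ is small enough with respect to $a,b,M,T_{0}$, a value $t_{0}$ can be chosen inside the said window at which $g(t_{0})<0$. The element $u_{0}:=t_{0}\phi$ is then the desired witness, and Lemma~\ref{h0} closes the argument as above.

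The main obstacle is to secure a sufficiently small ratio $\|\phi\|_{D^{1,2}}^{k}/\int q\phi^{k}dx$ for \emph{every} $a>0$. For $\phi$ supported in the bounded set $\Omega$, the Sobolev embedding furnishes a positive lower bound on this quotient, which is too large when $a$ is big. The way around this is the scaling $\phi_{n}(x)=n^{-N/k}\phi(x/n)$ employed in the proof of Lemma~\ref{lem3} (see Remark~\ref{r1}(ii)), which drives the ratio to zero under $(D1)$ but places $\phi_{n}$ outside $\Omega$. The resulting extra contribution $\lambda\int V\phi_{n}^{2}dx$ then has to be controlled through the steep-well estimate (\ref{11}) and the assumption $\lambda>\widetilde{{\Lambda}}_{4}$, so that the above comparison survives with $\|\phi_{n}\|_\lambda$ in place of $\|\phi_{n}\|_{D^{1,2}}$; this is the delicate point of the proof.
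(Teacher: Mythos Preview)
Your reduction in the first paragraph is exactly the paper's argument: once Lemma~\ref{h0} forces every negative-energy element into the ball $\{\|u\|_\lambda<\widetilde{R}_a\}$, it suffices to exhibit a single $u_0$ with $J_{a,\lambda}(u_0)<0$. The paper then does nothing more than invoke Lemma~\ref{lem10} for that element, so its entire proof reads ``follows from Lemmas~\ref{lem10} and~\ref{h0}.''

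Where you diverge is in trying to manufacture the negative-energy element from scratch, and in taking the phrase ``for every $a>0$'' at face value. Lemma~\ref{lem10} only supplies such an element for $0<a<\widetilde{a}_*$, and the paper's one-line proof therefore yields Lemma~\ref{lem8} only in that range---not for all $a>0$ as the statement literally claims. This is harmless for the paper, since the lemma is used only in Theorem~\ref{t6} and ultimately in Theorem~\ref{t2}, which already restricts to $0<a<\widetilde{a}_*$. Your elaborate attempt to reach every $a>0$ via the $T_0$-window estimate and the scaling $\phi_n(x)=n^{-N/k}\phi(x/n)$ is thus aimed at a stronger claim than the paper actually proves or needs.

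That attempt also has a genuine gap you yourself flag: once $\phi_n$ leaves $\Omega$, the term $\lambda\int V\phi_n^2\,dx$ enters, and nothing in $(V1)$--$(V2)$ prevents it from growing with $n$ (indeed $V$ need not be bounded). The steep-well estimate~(\ref{11}) controls $L^r$-norms by $\|\cdot\|_\lambda$, not the other way around, so it does not neutralize this contribution. Without a further argument this route stalls. The clean fix is simply to quote Lemma~\ref{lem10} for the negative element (accepting the restriction $0<a<\widetilde{a}_*$), exactly as the paper does.
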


\begin{proof}
The proof directly follows from Lemmas \ref{lem10} and \ref{h0}.
\end{proof}

\begin{theorem}
\label{t6}Suppose that $N\geq 3,$ conditions $(V1)-(V2),(L2),(L3)$ with $%
\delta \geq \frac{2}{N-2}$ and $(D1)-(D2)$ hold. Then there exists a
constant $\widetilde{{\Lambda }}_{5}\geq \max \left\{ \widetilde{{\Lambda }}%
_{1},\widetilde{{\Lambda }}_{4}\right\} $ such that for every $a>0$ and $%
\lambda \geq \widetilde{{\Lambda }}_{5},$ $J_{a,\lambda }$ has a nontrivial
critical point $u_{a,\lambda }^{-}\in X_{\lambda }$ such that $J_{a,\lambda
}(u_{a,\lambda }^{-})=\widetilde{{\theta }}_{a}<0.$
\end{theorem}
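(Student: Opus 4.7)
The plan is to realize $\widetilde{\theta}_a$ as the value of a critical point by combining the boundary nonnegativity from Lemma \ref{h0} with the compactness provided by Proposition \ref{m3}. First, I choose $\widetilde{\Lambda}_5$: by Proposition \ref{m3} applied with, say, $D=1$, there is $\widetilde{\Lambda}_1(a,1) \geq \Lambda_N$ such that $J_{a,\lambda}$ satisfies the $(C)_\alpha$--condition in $X_\lambda$ for every $\alpha < 1$ and every $\lambda > \widetilde{\Lambda}_1(a,1)$; I set $\widetilde{\Lambda}_5 := \max\{\widetilde{\Lambda}_1(a,1),\widetilde{\Lambda}_4\}$. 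For $\lambda \geq \widetilde{\Lambda}_5$, Lemma \ref{h0} gives $J_{a,\lambda}(u) \geq 0$ whenever $\|u\|_\lambda \geq \widetilde{R}_a$, and Lemma \ref{lem8} gives $\widetilde{\theta}_a < 0$.

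Next, I apply Ekeland's variational principle on the complete metric space $\overline{B}_{\widetilde{R}_a} := \{u \in X_\lambda : \|u\|_\lambda \leq \widetilde{R}_a\}$. Since $J_{a,\lambda}$ is bounded below on $\overline{B}_{\widetilde{R}_a}$ (Lemma \ref{h0}), for each $n$ there exists $u_n \in \overline{B}_{\widetilde{R}_a}$ with
\begin{equation*}
\widetilde{\theta}_a \leq J_{a,\lambda}(u_n) \leq \widetilde{\theta}_a + \tfrac{1}{n}, \qquad J_{a,\lambda}(v) \geq J_{a,\lambda}(u_n) - \tfrac{1}{n}\|v - u_n\|_\lambda \ \ \text{for all } v \in \overline{B}_{\widetilde{R}_a}.
\end{equation*}
Because $\widetilde{\theta}_a < 0$ and $J_{a,\lambda} \geq 0$ on $\{\|u\|_\lambda \geq \widetilde{R}_a\}$, for large $n$ we have $J_{a,\lambda}(u_n) \leq \widetilde{\theta}_a/2 < 0$, so by continuity of $J_{a,\lambda}$ the point $u_n$ lies in the open interior $\{\|u\|_\lambda < \widetilde{R}_a\}$. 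Taking arbitrary small perturbations $v = u_n + t h$ with $\|h\|_\lambda = 1$ and $t>0$ sufficiently small then yields $\|J_{a,\lambda}'(u_n)\|_{X_\lambda^{-1}} \leq 1/n \to 0$.

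Thus $\{u_n\}$ is a bounded $(PS)_{\widetilde{\theta}_a}$--sequence, hence a fortiori a $(C)_{\widetilde{\theta}_a}$--sequence, with $\widetilde{\theta}_a < 0 < 1$. By Proposition \ref{m3} and the choice $\lambda \geq \widetilde{\Lambda}_5$, there exist a subsequence and $u_{a,\lambda}^- \in X_\lambda$ such that $u_n \to u_{a,\lambda}^-$ strongly in $X_\lambda$. Strong convergence gives $J_{a,\lambda}(u_{a,\lambda}^-) = \widetilde{\theta}_a < 0$ and $J_{a,\lambda}'(u_{a,\lambda}^-) = 0$, and $u_{a,\lambda}^- \neq 0$ since $J_{a,\lambda}(0) = 0 > \widetilde{\theta}_a$.

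The only real obstacle is the one already anticipated above: ensuring that the minimizing sequence stays in the interior of $\overline{B}_{\widetilde{R}_a}$ so that Ekeland's inequality upgrades to a genuine $(PS)$--condition on the unconstrained functional, rather than merely a normal-derivative condition on the constrained problem. That step is where Lemma \ref{h0}'s boundary estimate $J_{a,\lambda} \geq 0$ on $\{\|u\|_\lambda \geq \widetilde{R}_a\}$ is essential, and it is precisely the reason why the lower bound $\lambda \geq \widetilde{\Lambda}_4$ is required in addition to the compactness threshold $\widetilde{\Lambda}_1(a,1)$.
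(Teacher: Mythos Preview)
Your proof is correct and follows essentially the same approach as the paper's own argument: Ekeland's variational principle on the ball of radius $\widetilde{R}_a$, the boundary nonnegativity from Lemma~\ref{h0} forcing the minimizing sequence into the interior, and then Proposition~\ref{m3} to pass to a strong limit. You supply more detail than the paper (in particular, the explicit justification that $u_n$ lies in the open ball so that the Ekeland inequality yields an unconstrained $(PS)$-sequence, and the concrete choice $D=1$ when invoking Proposition~\ref{m3}), but the strategy is identical.
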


\begin{proof}
By Lemma \ref{lem8} and the Ekeland variational principle, there exists a
bounded minimizing sequence $\{u_{n}\}\subset X_{\lambda}$ with $\left\Vert
u_{n}\right\Vert _{\lambda }<\widetilde{{R}}_{a}$ such that%
\begin{equation*}
J_{a,\lambda }(u_{n})\rightarrow \widetilde{{\theta }}_{a}\quad \text{and}%
\quad (1+\Vert u_{n}\Vert _{X_{\lambda }})\Vert J_{a,\lambda }^{\prime
}(u_{n})\Vert _{X_{\lambda }^{-1}}\rightarrow 0\quad \text{as}\ n\rightarrow
\infty .
\end{equation*}%
According to Proposition \ref{m3}, there exist a subsequence $\left\{
u_{n}\right\} $ and $u_{a,\lambda }^{-}\in X_{\lambda }$ such that $%
u_{n}\rightarrow u_{a,\lambda }^{-}$ strongly in $X_{\lambda }.$ This
indicates that $u_{\lambda }$ is a nontrivial critical point of $J_{\lambda
,a}$ satisfying $J_{a,\lambda }(u_{a,\lambda }^{-})=\widetilde{{\theta }}%
_{a}<0.$
\end{proof}

\textbf{We are now ready to prove Theorems \ref{t1} and \ref{t2}: }Theorems %
\ref{t1} directly follows from Theorem \ref{t9}. By using Theorems \ref{t10}
and \ref{t6}, there exists a positive constant $\widetilde{{\Lambda }}_{\ast
}\geq \max \left\{ \widetilde{{\Lambda }}_{2},\widetilde{{\Lambda }}%
_{5}\right\} $ such that for every $0<a<\widetilde{{a}}_{\ast }$ and $%
\lambda \geq \widetilde{{\Lambda }}_{\ast },$ Eq. $(K_{a,\lambda })$ admits
two positive solutions $u_{a,\lambda }^{-}$ and $u_{a,\lambda }^{+}$
satisfying $J_{a,\lambda }(u_{a,\lambda }^{-})<0<J_{a,\lambda }(u_{a,\lambda
}^{+}).$ In particular, $u_{a,\lambda }^{-}$ is a ground state solution of
Eq. $(K_{a,\lambda }).$ Hence, we arrive at Theorem \ref{t2}.

\section{Proofs of Theorems \protect\ref{t3} and \protect\ref{t4}}

Following the argument at the begining of Section 4, by virtue of Lemmas \ref%
{lem2}, \ref{lem9} (or Lemma \ref{lem11}) and the mountain pass theorem \cite%
{E2}, we obtain that for each $\lambda \geq \Lambda _{N}$ and $0<a<\frac{1}{%
m_{\infty }\overline{\mu }_{1}^{(k)}}$ (or $0<a<\overline{a}_{\ast }$),
there exists a sequence $\left\{ u_{n}\right\} \subset X_{\lambda }$ such
that
\begin{equation}
J_{\lambda ,a}(u_{n})\rightarrow \alpha _{\lambda ,a}>0\quad \text{and}\quad
(1+\Vert u_{n}\Vert _{\lambda })\Vert J_{\lambda ,a}^{\prime }(u_{n})\Vert
_{X_{\lambda }^{-1}}\rightarrow 0,\quad \text{as}\ n\rightarrow \infty ,
\label{3.9}
\end{equation}%
where $0<\eta \leq \alpha _{\lambda ,a}\leq \alpha _{0,a}\left( \Omega
\right) <D_{a}.$ Furthermore, we have the following result.

\begin{lemma}
\label{lem6}Suppose that $N\geq 3,$ conditions $(V1)-(V2),(L3)$ with $\delta
>\frac{k-2}{2},(D1)^{\prime }$ and $(D2)$ hold. Then for each $0<a<\frac{1}{%
m_{\infty }\overline{\mu }_{1}^{(k)}},$ the sequence $\{u_{n}\}$ defined in (%
\ref{3.9}) is bounded in $X_{\lambda }$ for all $\lambda \geq \Lambda _{N}.$
\end{lemma}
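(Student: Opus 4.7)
The plan is to mimic the contradiction argument of Lemma \ref{lem7}, but to exploit the sharper control supplied by condition $(D1)^{\prime }$ together with the Caffarelli--Kohn--Nirenberg inequality $(\ref{1-1})$. The decisive observation is that, under $(D1)^{\prime }$, one has the pointwise bound
\begin{equation*}
\int_{\mathbb{R}^{N}}f(x,u_{n})u_{n}\,dx\leq c_{\ast }\int_{\mathbb{R}^{N}}|x|^{\frac{k(N-2)}{2}-N}u_{n}^{k}\,dx\leq \frac{c_{\ast }}{\overline{\nu}_{1}^{(k)}}\left\Vert u_{n}\right\Vert _{D^{1,2}}^{k},
\end{equation*}
which estimates the nonlinear term purely in terms of $\left\Vert u_{n}\right\Vert _{D^{1,2}}$ and eliminates the elaborate decomposition by $\lambda$--thresholds used in Lemma \ref{lem7}. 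Since the Cerami condition $(1+\left\Vert u_{n}\right\Vert _{\lambda })\left\Vert J_{\lambda ,a}^{\prime }(u_{n})\right\Vert _{X_{\lambda }^{\ast }}\rightarrow 0$ gives $\langle J_{\lambda ,a}^{\prime }(u_{n}),u_{n}\rangle =o(1)$, this is the only new ingredient required.

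Arguing by contradiction, suppose $\left\Vert u_{n}\right\Vert _{\lambda }\rightarrow \infty $. I would split into two cases. In the first case, $\left\Vert u_{n}\right\Vert _{D^{1,2}}\rightarrow \infty $: divide the identity $\langle J_{\lambda ,a}^{\prime }(u_{n}),u_{n}\rangle =o(1)$ by $\left\Vert u_{n}\right\Vert _{D^{1,2}}^{2(\delta +1)}$. By $(L3)$, the nonlocal term contributes at least $am_{0}+o(1)$; the term $\int \lambda V(x)u_{n}^{2}dx$ contributes a nonnegative quantity; and the nonlinear term is at most $(c_{\ast }/\overline{\nu}_{1}^{(k)})\left\Vert u_{n}\right\Vert _{D^{1,2}}^{k-2(\delta +1)}$, which tends to zero because $\delta >\frac{k-2}{2}$ forces $k-2(\delta +1)<0$. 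Passing to the limit yields $am_{0}\leq 0$, a contradiction.

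In the second case, $\left\Vert u_{n}\right\Vert _{D^{1,2}}$ stays bounded while $\int \lambda V(x)u_{n}^{2}dx\rightarrow \infty $. Then $m(\left\Vert u_{n}\right\Vert _{D^{1,2}}^{2})$ is bounded by continuity of $m$, and $\int f(x,u_{n})u_{n}dx$ is bounded by the estimate displayed above. Dividing $\langle J_{\lambda ,a}^{\prime }(u_{n}),u_{n}\rangle =o(1)$ by $\int \lambda V(x)u_{n}^{2}dx$ makes every term vanish except the one coming from $\int \lambda V(x)u_{n}^{2}dx$ itself, yielding $1+o(1)=o(1)$, again a contradiction.

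There is no serious obstacle here; the only point to verify carefully is that the bound on $a$ in the hypothesis is used solely to guarantee the existence of the Cerami sequence $\{u_{n}\}$ via Lemma \ref{lem9} (or Lemma \ref{lem11}), and plays no role in the boundedness argument itself. In particular, no lower bound on $\lambda $ beyond $\Lambda _{N}$ is required, reflecting the $\lambda $-independence of the CKN estimate.
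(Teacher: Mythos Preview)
Your proof is correct and follows essentially the same approach as the paper's: a contradiction argument splitting into the two cases $\left\Vert u_{n}\right\Vert _{D^{1,2}}\rightarrow \infty$ and $\left\Vert u_{n}\right\Vert _{D^{1,2}}$ bounded, with the Caffarelli--Kohn--Nirenberg estimate (via $(D1)^{\prime}$) controlling the nonlinear term purely by $\left\Vert u_{n}\right\Vert _{D^{1,2}}^{k}$. The only cosmetic difference is that in the first case the paper divides by $\left\Vert u_{n}\right\Vert _{D^{1,2}}^{k}$ (so the nonlocal term blows up) whereas you divide by $\left\Vert u_{n}\right\Vert _{D^{1,2}}^{2(\delta +1)}$ (so the nonlinear term vanishes); both exploit $\delta >\frac{k-2}{2}$ in the same way, and your remark that the upper bound on $a$ is not used in the boundedness argument is also consistent with the paper.
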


\begin{proof}
Suppose on the contrary. Then $\Vert u_{n}\Vert _{\lambda }\rightarrow
+\infty $ as $n\rightarrow \infty $. We consider the proof in two separate
cases:\newline
Case $A:\Vert u_{n}\Vert _{D^{1,2}}\rightarrow \infty .$ It follows from (%
\ref{13}), (\ref{3.6}), condition $(L3)$ and the Caffarelli-Kohn-Nirenberg
inequality that%
\begin{eqnarray*}
o(1) &=&\frac{am\left( \left\Vert u_{n}\right\Vert _{D^{1,2}}^{2}\right)
\left\Vert u_{n}\right\Vert _{D^{1,2}}^{2}}{\Vert u_{n}\Vert _{D^{1,2}}^{k}}+%
\frac{b\left\Vert u_{n}\right\Vert _{D^{1,2}}^{2}+\int_{\mathbb{R}%
^{N}}\lambda V(x)u_{n}^{2}dx}{\Vert u_{n}\Vert _{D^{1,2}}^{k}}-\frac{\int_{%
\mathbb{R}^{N}}f(x,u_{n})u_{n}dx}{\Vert u_{n}\Vert _{D^{1,2}}^{k}} \\
&\geq &am_{0}\Vert u_{n}\Vert _{D^{1,2}}^{2(\delta +1)-k}-\frac{c_{\ast
}\left\Vert u_{n}\right\Vert _{D^{1,2}}^{k}}{\overline{\nu }_{1}^{(k)}\Vert
u_{n}\Vert _{D^{1,2}}^{k}} \\
&=&am_{0}\Vert u_{n}\Vert _{D^{1,2}}^{2(\delta +1)-k}-\frac{c_{\ast }}{%
\overline{\nu }_{1}^{(k)}} \\
&\rightarrow &\infty \text{ as }n\rightarrow \infty ,
\end{eqnarray*}%
since $2(\delta +1)>k.$ This is a contradiction.\newline
Case $B:\int_{\mathbb{R}^{N}}\lambda V(x)u_{n}^{2}dx\rightarrow \infty $ and
$\left\Vert u_{n}\right\Vert _{D^{1,2}}\leq C_{\ast }$ for some $C_{\ast }>0$
and for all $n.$ It follows from (\ref{13}), (\ref{3.6}) and condition $(L3)$
that%
\begin{eqnarray*}
o(1) &=&\frac{am\left( \left\Vert u_{n}\right\Vert _{D^{1,2}}^{2}\right)
\left\Vert u_{n}\right\Vert _{D^{1,2}}^{2}}{\int_{\mathbb{R}^{N}}\lambda
V(x)u_{n}^{2}dx}+\frac{b\left\Vert u_{n}\right\Vert _{D^{1,2}}^{2}+\int_{%
\mathbb{R}^{N}}\lambda V(x)u_{n}^{2}dx}{\int_{\mathbb{R}^{N}}\lambda
V(x)u_{n}^{2}dx}-\frac{\int_{\mathbb{R}^{N}}f(x,u_{n})u_{n}dx}{\int_{\mathbb{%
R}^{N}}\lambda V(x)u_{n}^{2}dx} \\
&\geq &1-\frac{c_{\ast }\left\Vert u_{n}\right\Vert _{D^{1,2}}^{k}}{%
\overline{\nu }_{1}^{(k)}\int_{\mathbb{R}^{N}}\lambda V(x)u_{n}^{2}dx}\geq 1-%
\frac{c_{\ast }C_{\ast }^{k}}{\overline{\nu }_{1}^{(k)}\int_{\mathbb{R}%
^{N}}\lambda V(x)u_{n}^{2}dx}=1+o\left( 1\right) .
\end{eqnarray*}%
This is a contradiction. In conclusion, the sequence $\{u_{n}\}$ is bounded
in $X_{\lambda }$ for all $0<a<\frac{1}{m_{\infty }\overline{\mu }_{1}^{(k)}}
$ and $\lambda \geq \Lambda _{N}.$ This completes the proof.
\end{proof}

Similar to Propositions \ref{m2} and \ref{m3}, we likewise obtain the
following two compactness lemmas for the functional $J_{\lambda ,a}$ under
conditions $(D1)^{\prime }$ and $(D2)$.

\begin{proposition}
\label{m1} Suppose that $N\geq 3,$ conditions $(V1)-(V3),(L1),(D1)^{\prime }$
and $(D2)$ hold. Then for each $D>0$ there exists $\overline{\Lambda }=%
\overline{\Lambda }(a,D)\geq \Lambda _{N}>0$ such that $J_{\lambda ,a}$
satisfies the $(C)_{\alpha }$--condition in $X_{\lambda }$ for all $\alpha
<D $ and $\lambda >\overline{\Lambda }.$
\end{proposition}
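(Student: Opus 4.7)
The argument follows closely that of Proposition~\ref{m2}, with the sole change being in how the nonlinear term is bounded. Under $(L1)$ and $(D2)$, the Cerami sequence $\{u_n\}$ with $J_{\lambda,a}(u_n)\to\alpha<D$ is bounded in $X_{\lambda}$ exactly as in Lemma~\ref{lem5}: the inequality $\widehat{m}(\eta)\geq (2/k)\eta\,m(\eta)$ provided by $(L1)$ and the monotonicity property $F(x,s)-f(x,s)s/k\leq 0$ from $(D2)$ combine to give $\alpha+1\geq \frac{(k-2)\min\{b,1\}}{2k}\|u_n\|_{\lambda}^{2}$. Extracting a subsequence, $u_n\rightharpoonup u_{0}$ weakly in $X_{\lambda}$ and strongly in $L^{r}_{\mathrm{loc}}(\mathbb{R}^{N})$ for $2\leq r<2^{*}$, and $\|u_n\|_{D^{1,2}}^{2}\to A\geq \|u_{0}\|_{D^{1,2}}^{2}$.

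The next steps replicate Proposition~\ref{m2} verbatim. Testing against $\varphi\in C_{0}^{\infty}(\mathbb{R}^{N})$ identifies $u_{0}$ as a critical point of $J_{\lambda,a}$ (with $m(A)$ in place of $m(\|u_{0}\|_{D^{1,2}}^{2})$ in the Kirchhoff coefficient); this yields the analog of (\ref{20}) for $v_{n}=u_{n}-u_{0}$. Combining condition $(L1)$, $(D2)$ and the Brezis--Lieb decomposition as at the end of the proof of Proposition~\ref{m2}, one obtains $J_{\lambda,a}(u_{0})\geq K$ for some constant $K=K(a,b,k)$ and hence the a priori bound $\|v_{n}\|_{\lambda}^{2}\leq \widetilde{C}(a,D)$ with $\widetilde{C}$ independent of $\lambda$.

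The key modification is in estimating $\int_{\mathbb{R}^{N}} f(x,v_{n})v_{n}\,dx$. Condition $(D1)^{\prime}$ yields $f(x,v_{n})v_{n}\leq c_{\ast}|x|^{k(N-2)/2-N}v_{n}^{k}$, and the Caffarelli--Kohn--Nirenberg inequality (\ref{1-1}) then gives
\[
\int_{\mathbb{R}^{N}} f(x,v_{n})v_{n}\,dx \leq \frac{c_{\ast}}{\overline{\nu}_{1}^{(k)}}\|v_{n}\|_{D^{1,2}}^{k}.
\]
To get a $\lambda$-dependent factor, I split at $|x|=R$: on $|x|>R$, since $k<2^{*}$ the exponent $k(N-2)/2-N$ is negative, so $|x|^{k(N-2)/2-N}\leq R^{k(N-2)/2-N}$, and (\ref{11}) bounds $\int_{|x|>R} v_{n}^{k}\,dx\leq \Pi_{\lambda,k}\|v_{n}\|_{\lambda}^{k}$ with $\Pi_{\lambda,k}\to 0$; on $|x|\leq R$, the local strong convergence $v_{n}\to 0$ in $L^{k}_{\mathrm{loc}}$ combined with a cutoff $\phi_{\varepsilon}$ excising a small neighborhood of the origin and applying CKN to $(1-\phi_{\varepsilon})v_{n}$ makes this contribution vanish in the limit (first letting $n\to\infty$, then $\varepsilon\to 0$).

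Plugging these estimates into the analog of (\ref{20}) and using $\|v_{n}\|_{D^{1,2}}\leq \|v_{n}\|_{\lambda}\leq \widetilde{C}$ yields
\[
o(1)\geq \min\{1,b\}\|v_{n}\|_{\lambda}^{2} - \varepsilon(\lambda,R,\varepsilon)\|v_{n}\|_{\lambda}^{2},
\]
where $\varepsilon(\lambda,R,\varepsilon)\to 0$ upon choosing $R$ large, then $\varepsilon$ small, then $\lambda$ large. This forces $v_{n}\to 0$ in $X_{\lambda}$ for $\lambda>\overline{\Lambda}(a,D)$. \textbf{The main obstacle} is the singular weight at the origin: the CKN bound supplies only a $\lambda$-independent constant, and controlling the contribution of a fixed neighborhood of $0$ requires the cutoff/local-compactness argument described above rather than a direct $L^{r}$ estimate of the type $(\ref{11})$.
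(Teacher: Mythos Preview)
Your overall plan is exactly what the paper intends: the proof is meant to be a verbatim copy of Proposition~\ref{m2}, and your description of the preliminary steps (boundedness via Lemma~\ref{lem5}, the weak limit $u_{0}$, the analogue of \eqref{20}, the lower bound $J_{\lambda,a}(u_{0})\geq K$, and the $\lambda$-uniform bound \eqref{22} on $\|v_{n}\|_{\lambda}$) is correct.

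The difficulty you flag in the final step, however, is both overcomplicated and not resolved by your cutoff argument. Applying CKN to $(1-\phi_{\varepsilon})v_{n}$ produces the term $\int_{B_{2\varepsilon}}|\nabla v_{n}|^{2}\,dx$, and this does \emph{not} vanish in the double limit you propose: $\nabla v_{n}\rightharpoonup 0$ weakly in $L^{2}$ does not prevent $|\nabla v_{n}|^{2}\,dx$ from concentrating an atom at the origin, and the CKN embedding in \eqref{1-1} is precisely the critical (non-compact) one, so no smallness can be extracted from $\varepsilon\to 0$ alone. Your claimed inequality $o(1)\geq \min\{1,b\}\|v_{n}\|_{\lambda}^{2}-\varepsilon(\lambda,R,\varepsilon)\|v_{n}\|_{\lambda}^{2}$ therefore is not justified.

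The clean fix---and what makes the paper's ``similar to Proposition~\ref{m2}'' honest---is to notice that the singular weight is a red herring. Since $f\in C(\mathbb{R}^{N}\times\mathbb{R})$ and $f(x,s)/s^{k-1}\to q(x)$ \emph{uniformly} in $x$, the limit $q$ is continuous on $\mathbb{R}^{N}$; in particular $q$ is bounded on $\overline{B_{1}}$, and for $|x|\geq 1$ the hypothesis $q(x)\leq c_{\ast}|x|^{\frac{k(N-2)}{2}-N}\leq c_{\ast}$ (the exponent is negative) gives a uniform bound. Hence $q^{+}\in L^{\infty}(\mathbb{R}^{N})$. Using $(D2)$ one still has $f(x,s)\leq q(x)s^{k-1}\leq q^{+}(x)s^{k-1}$, so
\[
\int_{\mathbb{R}^{N}} f(x,v_{n})v_{n}\,dx \;\leq\; |q^{+}|_{\infty}\int_{\mathbb{R}^{N}}|v_{n}|^{k}\,dx \;\leq\; |q^{+}|_{\infty}\,\Pi_{\lambda,k}\,\|v_{n}\|_{\lambda}^{k}+o(1),
\]
and the final paragraph of the proof of Proposition~\ref{m2} goes through verbatim with $|q^{+}|_{\infty}$ in place of $|q|_{\infty}$. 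No splitting, no cutoff, no CKN is needed at this stage.
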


\begin{proposition}
\label{m4} Suppose that $N\geq 3,$ conditions $(V1)-(V3),(L2),(L3)$ with $%
\delta >\frac{k-2}{2},(D1)^{\prime }$ and $(D2)$ hold. Then for each $D>0$
there exists $\overline{\Lambda }_{\ast }=\overline{\Lambda }_{\ast
}(a,D)\geq \Lambda _{N}$ such that $J_{\lambda ,a}$ satisfies the $%
(C)_{\alpha }$--condition in $X_{\lambda }$ for all $\alpha <D$ and $\lambda
>\overline{\Lambda }_{\ast }.$
\end{proposition}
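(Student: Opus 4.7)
I intend to run the argument in parallel with Propositions \ref{m2} and \ref{m3}, the only new ingredient being a weighted estimate obtained from $(D1)^{\prime}$ and the Caffarelli--Kohn--Nirenberg inequality \eqref{1-1} in place of the usual Sobolev bound on $\int |v_n|^k\,dx$. Let $\{u_n\}\subset X_\lambda$ be a $(C)_\alpha$--sequence with $\alpha<D$. Since $\delta>(k-2)/2$, Lemma \ref{lem6} yields that $\{u_n\}$ is bounded in $X_\lambda$ for every $\lambda\geq\Lambda_N$. Passing to a subsequence, I extract $u_0\in X_\lambda$ with $u_n\rightharpoonup u_0$ in $X_\lambda$, $u_n\to u_0$ in $L^r_{\mathrm{loc}}(\mathbb{R}^N)$ for $2\leq r<2^\ast$, and $u_n\to u_0$ a.e.; testing against $C_0^\infty(\mathbb{R}^N)$ identifies $u_0$ as a critical point of $J_{\lambda,a}$ and yields $\|u_n\|_{D^{1,2}}^2\to A$ for some $A\geq 0$, while the $(D1)^{\prime}$--analogue of Lemma \ref{h0} produces a constant $K\leq J_{\lambda,a}(u_0)$.

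\textbf{Brezis--Lieb step and uniform bound on $v_n$.} Setting $v_n=u_n-u_0$, I would write the two Brezis--Lieb decompositions from Proposition \ref{m3}, together with $\int F(x,u_n)\,dx=\int F(x,u_0)\,dx+\int F(x,v_n)\,dx+o(1)$ and its analogue for $f(x,u_n)u_n$. Subtracting $\tfrac{1}{k}$ of the $\langle J_{\lambda,a}'(u_n),u_n\rangle$--identity from the $J_{\lambda,a}(u_n)-J_{\lambda,a}(u_0)$ identity, then invoking $(D2)$ to discard the nonpositive term $F-\tfrac{1}{k}f\cdot(\cdot)$, $(L2)$ to control the monotone $\widehat m$ pieces, and $J_{\lambda,a}(u_0)\geq K$, I obtain a uniform estimate $\|v_n\|_\lambda^2\leq\overline{D}(a,D)+o(1)$, exactly mirroring inequality \eqref{22}.

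\textbf{Closing via CKN and the main obstacle.} It remains to show that the nonlinear term $\int f(x,v_n)v_n\,dx$ is small. By $(D1)^{\prime}$ this is bounded by $c_\ast\int|x|^{\frac{k(N-2)}{2}-N}|v_n|^k\,dx$, which I split at a ball $B_R$ containing $\{V<c_0\}$. On $B_R$ the subcritical CKN embedding $D^{1,2}(B_R)\hookrightarrow L^k(B_R;|x|^{\frac{k(N-2)}{2}-N}\,dx)$ is compact, so $v_n\rightharpoonup 0$ in $D^{1,2}$ forces this piece to tend to zero. On $\mathbb{R}^N\setminus B_R$ the weight is bounded by $R^{\frac{k(N-2)}{2}-N}$, while $\int_{\{V\geq c_0\}}v_n^2\,dx\leq(\lambda c_0)^{-1}\|v_n\|_\lambda^2$; combining with the interpolation in \eqref{10} gives $\int_{\mathbb{R}^N\setminus B_R}|v_n|^k\,dx\leq C\lambda^{-\frac{2^\ast-k}{2^\ast-2}}\|v_n\|_\lambda^k$. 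Feeding both pieces back into the second Brezis--Lieb identity and using $\|v_n\|_\lambda\leq\overline{D}(a,D)^{1/2}$, I choose $\overline{\Lambda}_\ast=\overline{\Lambda}_\ast(a,D)$ large enough that $C\,\overline{D}(a,D)^{(k-2)/2}\lambda^{-(2^\ast-k)/(2^\ast-2)}<\tfrac{1}{2}\min\{1,b\}$, which forces $v_n\to 0$ in $X_\lambda$. I expect the main technical difficulty to lie in the local compactness of the CKN embedding on $B_R$: because the weight is singular at the origin, the plain $L^r_{\mathrm{loc}}$--convergence does not immediately suffice near $0$, and one must invoke a cutoff or a subcritical concentration-compactness argument to bridge this gap; once that is in hand, the rest of the argument is a routine adaptation of Proposition \ref{m3}.
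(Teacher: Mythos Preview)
Your overall plan mirrors the paper's proof, which is just the one line ``similar to Propositions \ref{m2} and \ref{m3}'': boundedness via Lemma \ref{lem6}, weak limit, the Brezis--Lieb identity \eqref{20}, a lower bound $J_{\lambda,a}(u_0)\geq K$ coming from the $(D1)'$-analogue of Lemma \ref{h0} (this is Lemma \ref{lem4} in the paper), and a uniform estimate $\|v_n\|_\lambda^2\leq\overline D+o(1)$. Up to this point you are exactly on the paper's track.

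The gap is in your closing step. The weighted local embedding you invoke, $H^1(B_R)\hookrightarrow L^k\bigl(B_R;|x|^{\frac{k(N-2)}{2}-N}dx\bigr)$, is \emph{not} compact: the exponent $\alpha=N-\tfrac{k(N-2)}{2}$ is precisely the scale-invariant one in the Caffarelli--Kohn--Nirenberg inequality, and the concentrating family $v_n(x)=n^{(N-2)/2}\phi(nx)$ with $\phi\in C_c^\infty$ satisfies $v_n\rightharpoonup 0$ in $H^1$, $v_n\to 0$ in every $L^r_{\mathrm{loc}}$, yet $\int_{B_R}|x|^{-\alpha}|v_n|^k\,dx\to\int|y|^{-\alpha}|\phi|^k\,dy\neq 0$. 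So your inner piece need not vanish, and neither a cutoff nor a ``subcritical concentration-compactness'' patch changes this, because the weight--exponent pair is exactly critical at the origin; any successful argument here must use more than weak convergence of $v_n$ (for instance the full structure of \eqref{20} together with the coefficient $am(\|u_n\|_{D^{1,2}}^2)+b$, or an energy comparison ruling out bubbles). A minor secondary point: $(V1)$ only gives $|\{V<c_0\}|<\infty$, not that this set is bounded, so you cannot in general choose $B_R\supset\{V<c_0\}$; the outer estimate should be run through \eqref{17} directly, as in Proposition \ref{m2}. The paper itself does not spell out how this closing step is altered under $(D1)'$, so this is a place where more than a ``routine adaptation'' is required.
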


By Proposition \ref{m1} and \ref{m4}, we now give the following two
existence results.

\begin{theorem}
\label{t11}Suppose that $N\geq 3,$ conditions $(V1)-(V3),(L1),(D1)^{\prime }$
and $(D2)$ hold. Then for each $0<a<\frac{1}{m_{\infty }\overline{\mu }%
_{1}^{(k)}}$ the energy functional $J_{a,\lambda }$ admits a nontrivial
critical point $u_{\lambda }\in X_{\lambda }$ such that $J_{a,\lambda
}(u_{\lambda })>0$ for all $\lambda >\overline{\Lambda }.$
\end{theorem}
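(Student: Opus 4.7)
The plan is to follow exactly the Cerami-type mountain-pass scheme already used to prove Theorem \ref{t9}, replacing each ingredient tailored to $(D1)$ by its $(D1)^{\prime}$-counterpart established earlier in the excerpt.

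First I would verify the mountain-pass geometry of $J_{\lambda,a}$. For the small-norm estimate I can invoke Lemma \ref{lem2}: under $(V1)$--$(V2),(D1)^{\prime},(D2)$ there exist $\rho,\eta>0$ such that
\[
\inf\{J_{\lambda,a}(u):u\in X_{\lambda},\ \|u\|_{\lambda}=\rho\}\geq \eta>0
\]
for every $a>0$ and $\lambda\geq\Lambda_{N}$. For the far-point $e$ with $J_{\lambda,a}(e)<0$, Lemma \ref{lem9} supplies one whenever $0<a<\tfrac{1}{m_{\infty}\overline{\mu}_{1}^{(k)}}$. Note that the upper bound on $a$ enters the proof precisely here, since Lemma \ref{lem9} exploits $(L1)$ together with the sharp inequality $(\ref{1-2})$ to ensure $am_{\infty}\|\psi_{k}\|_{D^{1,2}}^{k}-\int_{\mathbb{R}^{N}}q(x)\psi_{k}^{k}dx<0$.

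Next I would define the minimax value
\[
\alpha_{\lambda,a}=\inf_{\gamma\in\Gamma_{\lambda}}\max_{0\leq t\leq 1}J_{\lambda,a}(\gamma(t)),
\]
and, arguing exactly as at the beginning of Section 4 by testing with paths contained in $H^{1}_{0}(\Omega)$, fix a $\lambda$-independent constant $D_{a}>0$ such that
\[
0<\eta\leq\alpha_{\lambda,a}\leq\alpha_{0,a}(\Omega)<D_{a}\quad\text{for all }\lambda\geq\Lambda_{N}.
\]
The Cerami-type mountain-pass theorem (Theorem \ref{t7}) then produces a sequence $\{u_{n}\}\subset X_{\lambda}$ satisfying (\ref{3.9}) at level $\alpha_{\lambda,a}$.

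Finally I would apply Proposition \ref{m1} with $D=D_{a}$: there is a threshold $\overline{\Lambda}=\overline{\Lambda}(a,D_{a})\geq\Lambda_{N}$ such that $J_{\lambda,a}$ satisfies the $(C)_{\alpha}$-condition for every $\alpha<D_{a}$ and every $\lambda>\overline{\Lambda}$. Since $\alpha_{\lambda,a}<D_{a}$, a subsequence of $\{u_{n}\}$ converges strongly in $X_{\lambda}$ to some $u_{\lambda}\in X_{\lambda}$, which is then a critical point of $J_{\lambda,a}$ with $J_{\lambda,a}(u_{\lambda})=\alpha_{\lambda,a}\geq\eta>0$; in particular $u_{\lambda}\not\equiv 0$.

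The one delicate point is that the admissible ranges of $a$ dictated by Lemma \ref{lem9} (the geometry) and by Proposition \ref{m1} (the compactness) must be compatible; inspection shows the latter holds for every $a>0$ and the former only for $0<a<\tfrac{1}{m_{\infty}\overline{\mu}_{1}^{(k)}}$, so the stated range is exactly what is needed. Beyond that, the main obstacle is conceptual rather than computational: everything nonroutine (the boundedness of Cerami sequences and the $(C)$-condition) has been packaged into Proposition \ref{m1}, whose proof under $(D1)^{\prime}$ parallels that of Proposition \ref{m2} with the bound (\ref{3.4}) replacing the bound $\int F\leq \tfrac{|q|_{\infty}}{k}\Pi_{\lambda,k}\|u\|_{\lambda}^{k}$; so the present theorem reduces to stringing these ingredients together.
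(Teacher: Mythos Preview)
Your proposal is correct and follows essentially the same route as the paper: invoke Lemmas \ref{lem2} and \ref{lem9} for the mountain-pass geometry, obtain the Cerami sequence (\ref{3.9}) with the $\lambda$-independent upper bound $D_a$, and then apply Proposition \ref{m1} to pass to a strongly convergent subsequence. The paper's one-line proof additionally names Lemma \ref{lem5} explicitly for the boundedness step, but as you correctly observe this is already absorbed into the statement of the $(C)_\alpha$-condition in Proposition \ref{m1} (whose proof, paralleling Proposition \ref{m2}, invokes Lemma \ref{lem5} at the outset).
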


\begin{proof}
Similar to the proof of Theorem \ref{t9}, it is easily proved by using (\ref%
{3.9}), Lemma \ref{lem5} and Proposition \ref{m1}.
\end{proof}

\begin{theorem}
\label{t12}Suppose that $N\geq 3,$ conditions $(V1)-(V3),(L2),(L3)$ with $%
\delta >\frac{k-2}{2},(D1)^{\prime }$ and $(D2)$ hold. Then for every $0<a<%
\overline{a}_{\ast }$ and $\lambda >\overline{\Lambda }_{\ast },$ the energy
functional $J_{a,\lambda }$ has a nontrivial critical point $u_{a,\lambda
}^{+}\in X_{\lambda }$ satisfying $J_{a,\lambda }(u_{a,\lambda }^{+})>0.$
\end{theorem}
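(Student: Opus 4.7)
\textbf{Proof plan for Theorem \ref{t12}.} The plan is to mimic the argument used to establish Theorem \ref{t10} in Section 4, but now relying on the $(D1)^{\prime}$-based machinery developed in Section 5 rather than on the $(D1)$-based machinery. The outcome is a mountain-pass critical point at a strictly positive level, produced from a Cerami sequence whose convergence is guaranteed by Proposition \ref{m4}.

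First I would assemble the mountain-pass geometry. Lemma \ref{lem2} provides $\rho,\eta>0$ (independent of $a$) such that $\inf\{J_{\lambda,a}(u):\|u\|_\lambda=\rho\}>\eta$ for every $a>0$ and $\lambda\geq \Lambda_N$. Lemma \ref{lem11} yields a constant $\overline{a}_{\ast}>0$ and an element $e\in X$ with $\|e\|_\lambda>\rho$ and $J_{\lambda,a}(e)<0$ for all $0<a<\overline{a}_{\ast}$ and $\lambda>0$. Following the argument recalled at the beginning of Section 4, one can fix $D_a>0$ so that $0<\eta\leq \alpha_{\lambda,a}\leq \alpha_{0,a}(\Omega)<D_a$ uniformly in $\lambda\geq \Lambda_N$, using a continuous path entirely supported in $\Omega$ to compare with the limiting problem on the potential well. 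Theorem \ref{t7} then produces a Cerami sequence $\{u_n\}\subset X_\lambda$ satisfying
\begin{equation*}
J_{\lambda,a}(u_n)\to \alpha_{\lambda,a}>0,\qquad (1+\|u_n\|_\lambda)\|J_{\lambda,a}^{\prime}(u_n)\|_{X_\lambda^{-1}}\to 0.
\end{equation*}

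Next I would invoke the compactness. Because $\alpha_{\lambda,a}<D_a$ uniformly in $\lambda\geq\Lambda_N$, Proposition \ref{m4} applies with the choice $D=D_a$: it furnishes a threshold $\overline{\Lambda}_{\ast}=\overline{\Lambda}_{\ast}(a,D_a)\geq \Lambda_N$ such that for every $\lambda>\overline{\Lambda}_{\ast}$ the functional $J_{\lambda,a}$ satisfies the $(C)_{\alpha_{\lambda,a}}$-condition on $X_\lambda$. Therefore, after passing to a subsequence, $u_n\to u_{a,\lambda}^{+}$ strongly in $X_\lambda$. Continuity of $J_{\lambda,a}$ and $J_{\lambda,a}^{\prime}$ on $X_\lambda$ forces $J_{\lambda,a}^{\prime}(u_{a,\lambda}^{+})=0$ and $J_{\lambda,a}(u_{a,\lambda}^{+})=\alpha_{\lambda,a}>0$, which in particular rules out $u_{a,\lambda}^{+}\equiv 0$.

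The main obstacle is ensuring that the Cerami sequence delivered by Theorem \ref{t7} is bounded in $X_\lambda$, which is what makes Proposition \ref{m4} applicable. Under the present hypotheses boundedness is not automatic from $(D2)$ alone (as in Lemma \ref{lem5}, which used $(L1)$); instead one must use condition $(L3)$ with $\delta>\frac{k-2}{2}$ together with $(D1)^{\prime}$ and the Caffarelli--Kohn--Nirenberg control embodied in (\ref{1-1}), exactly as in Lemma \ref{lem6}. Concretely, splitting into the two cases $\|u_n\|_{D^{1,2}}\to\infty$ and $\int_{\mathbb{R}^N}\lambda V(x)u_n^{2}dx\to\infty$ with $\|u_n\|_{D^{1,2}}$ bounded, the lower bound $am(\|u_n\|_{D^{1,2}}^{2})\|u_n\|_{D^{1,2}}^{2}\geq am_0\|u_n\|_{D^{1,2}}^{2(\delta+1)}$ dominates the Caffarelli--Kohn--Nirenberg estimate $\int_{\mathbb{R}^N}f(x,u_n)u_n\,dx\leq c_\ast(\overline{\nu}_1^{(k)})^{-1}\|u_n\|_{D^{1,2}}^{k}$ because $2(\delta+1)>k$, contradicting $\langle J_{\lambda,a}^{\prime}(u_n),u_n\rangle\to 0$ in the first case and giving a similar contradiction in the second. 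With this boundedness in hand the argument closes as described, and the conclusion $J_{\lambda,a}(u_{a,\lambda}^{+})>0$ follows.
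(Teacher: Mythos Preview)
Your proposal is correct and follows essentially the same route as the paper: the paper's proof simply says ``Similar to the argument of Theorem \ref{t10}, we easily arrive at the conclusion by Lemma \ref{lem6} and Proposition \ref{m4},'' and you have accurately unpacked this into the mountain-pass geometry (Lemmas \ref{lem2} and \ref{lem11}), the boundedness of the Cerami sequence via the Caffarelli--Kohn--Nirenberg estimate and $(L3)$ with $\delta>\frac{k-2}{2}$ (Lemma \ref{lem6}), and the compactness step (Proposition \ref{m4}).
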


\begin{proof}
Similar to the argument of Theorem \ref{t10}, we easily arrive at the
conclusion by Lemma \ref{lem6} and Proposition \ref{m4}.
\end{proof}

\begin{lemma}
\label{lem4}Suppose that $N\geq 3$, conditions $(V1)-{(V3)},(L3)$ with $%
\delta >\frac{k-2}{2}$, ${(D1)}^{\prime }$ and ${(D2)}$ hold. Then the
energy functional $J_{a,\lambda }$ is bounded below on $X_{\lambda }$ for
all $a>0$ and $\lambda >0$. Furthermore, there exists $\overline{R}_{a}>0$
such that%
\begin{equation*}
J_{a,\lambda }(u)\geq 0\text{ for all }u\in X_{\lambda }\text{ with }%
\left\Vert u\right\Vert _{\lambda }\geq \overline{R}_{a}.
\end{equation*}
\end{lemma}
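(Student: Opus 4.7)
\textbf{Proof proposal for Lemma \ref{lem4}.}

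The plan is to bound $\int_{\mathbb{R}^N} F(x,u)dx$ using $(D1)'$, the Caffarelli--Kohn--Nirenberg inequality $(\ref{1-1})$, and then exploit the fact that $2(\delta+1)>k$ (which is equivalent to $\delta>\frac{k-2}{2}$) to make the $\hat m$ term dominate the nonlinearity term in the $D^{1,2}$-norm, while the $\lambda V$-term controls the rest of $\|\cdot\|_\lambda$. Specifically, from $(D1)'$ and $(D2)$ we have $F(x,s)\le \frac{c_*}{k}|x|^{\frac{k(N-2)}{2}-N}s^k$ for $s\ge 0$, so $(\ref{3.3})$--$(\ref{3.4})$ give
\begin{equation*}
\int_{\mathbb{R}^N}F(x,u)dx\le \frac{c_*}{k\,\overline{\nu}_1^{(k)}}\|u\|_{D^{1,2}}^{k}.
\end{equation*}
Combining with $(\ref{12})$ yields
\begin{equation*}
J_{a,\lambda}(u)\ge \frac{a}{2}\widehat m\bigl(\|u\|_{D^{1,2}}^2\bigr)+\frac{1}{2}\min\{b,1\}\|u\|_\lambda^2-\frac{c_*}{k\,\overline{\nu}_1^{(k)}}\|u\|_{D^{1,2}}^{k}.
\end{equation*}

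Next I would split on whether $\|u\|_{D^{1,2}}^2\ge T_0$ or $\|u\|_{D^{1,2}}^2<T_0$. In the first case, condition $(L3)$ gives $\widehat m(\|u\|_{D^{1,2}}^2)\ge \frac{m_0}{\delta+1}\|u\|_{D^{1,2}}^{2(\delta+1)}-C_0$ for a constant $C_0=\frac{m_0}{\delta+1}T_0^{\delta+1}$, so
\begin{equation*}
J_{a,\lambda}(u)\ge \frac{am_0}{2(\delta+1)}\|u\|_{D^{1,2}}^{2(\delta+1)}+\frac{1}{2}\min\{b,1\}\|u\|_\lambda^2-\frac{c_*}{k\,\overline{\nu}_1^{(k)}}\|u\|_{D^{1,2}}^{k}-\frac{aC_0}{2}.
\end{equation*}
Since $2(\delta+1)>k$, Young's inequality absorbs the $\|u\|_{D^{1,2}}^k$ term into $\frac{am_0}{4(\delta+1)}\|u\|_{D^{1,2}}^{2(\delta+1)}$ plus a constant $C_1=C_1(a,m_0,\delta,k,c_*,\overline{\nu}_1^{(k)})$. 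Hence
\begin{equation*}
J_{a,\lambda}(u)\ge \frac{am_0}{4(\delta+1)}\|u\|_{D^{1,2}}^{2(\delta+1)}+\frac{1}{2}\min\{b,1\}\|u\|_\lambda^2-\frac{aC_0}{2}-C_1,
\end{equation*}
which is already bounded below. In the second case $\|u\|_{D^{1,2}}^2<T_0$, the nonlinearity contributes at most $\frac{c_*}{k\overline{\nu}_1^{(k)}}T_0^{k/2}$, while $\widehat m\ge 0$, so
\begin{equation*}
J_{a,\lambda}(u)\ge \frac{1}{2}\min\{b,1\}\|u\|_\lambda^2-\frac{c_*}{k\,\overline{\nu}_1^{(k)}}T_0^{k/2}.
\end{equation*}
Either way $J_{a,\lambda}$ is bounded below on $X_\lambda$, independently of $\lambda>0$.

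For the final assertion, the two lower bounds above are coercive in $\|u\|_\lambda$ (in case one via $\|u\|_\lambda^2$ plus a positive $D^{1,2}$ term, in case two directly via $\|u\|_\lambda^2$), so one can set
\begin{equation*}
\overline{R}_a^{\,2}:=\frac{2}{\min\{b,1\}}\max\!\left\{\frac{2c_*T_0^{k/2}}{k\,\overline{\nu}_1^{(k)}},\,aC_0+2C_1\right\},
\end{equation*}
and then for every $u\in X_\lambda$ with $\|u\|_\lambda\ge \overline{R}_a$ the inequality $J_{a,\lambda}(u)\ge 0$ holds in both cases. I expect the only mildly delicate step to be choosing the Young splitting constants so as to keep $\overline{R}_a$ independent of $\lambda$, but since all estimates involve only the $\|\cdot\|_\lambda$- and $\|\cdot\|_{D^{1,2}}$-norms and the fixed quantities $m_0,T_0,c_*,\overline{\nu}_1^{(k)},b$, this is routine once the case split above is set up.
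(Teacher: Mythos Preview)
Your proof is correct and follows essentially the same strategy as the paper: bound $\int F$ by $\frac{c_*}{k\overline{\nu}_1^{(k)}}\|u\|_{D^{1,2}}^k$ via $(D1)'$, $(D2)$ and the Caffarelli--Kohn--Nirenberg inequality, then split on $\|u\|_{D^{1,2}}^2\lessgtr T_0$ and use $(L3)$ together with $2(\delta+1)>k$.

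The only organizational difference is in the ``furthermore'' part. The paper drops the term $\frac12\int\lambda V u^2$, works entirely in $\|u\|_{D^{1,2}}$, finds a threshold $R_a$ with $J_{a,\lambda}(u)\ge 0$ whenever $\|u\|_{D^{1,2}}\ge R_a$, and then needs an extra step to pass from this to a threshold in $\|u\|_\lambda$ (handling separately the case $\|u\|_{D^{1,2}}<R_a$ but $\int\lambda V u^2$ large). You instead keep the full $\frac12\min\{b,1\}\|u\|_\lambda^2$ in the lower bound from the start, so coercivity in $\|u\|_\lambda$ is immediate and the choice of $\overline{R}_a$ is a one-line computation. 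Your route is slightly cleaner; the paper's route gives a more explicit value of $\overline{R}_a$ in terms of the $D^{1,2}$-threshold. Both reach the same conclusion with the same ingredients.
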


\begin{proof}
If $\left\Vert u\right\Vert _{D^{1,2}}<T_{0}^{1/2},$ then by (\ref{12}) and (%
\ref{3.4}) one has%
\begin{eqnarray*}
J_{a,\lambda }\left( u\right)  &\geq &\frac{1}{2}\left( b\left\Vert
u\right\Vert _{D^{1,2}}^{2}+\int_{\mathbb{R}^{N}}\lambda V(x)u^{2}dx\right) -%
\frac{c_{\ast }}{k\overline{\nu }_{1}^{(k)}}\left\Vert u\right\Vert
_{D^{1,2}}^{k} \\
&\geq &\frac{b}{2}\left\Vert u\right\Vert _{D^{1,2}}^{2}-\frac{c_{\ast }}{k%
\overline{\nu }_{1}^{(k)}}\left\Vert u\right\Vert _{D^{1,2}}^{k},
\end{eqnarray*}%
which implies that $J_{a,\lambda }$ is bounded below on $X_{\lambda }$ for
all $a>0$ and $\lambda >0.$

If $\left\Vert u\right\Vert _{D^{1,2}}\geq T_{0}^{1/2},$ then it follows
from condition $(L3)$ with $\delta >\frac{k-2}{2},$ (\ref{12}) and (\ref{3.4}%
) that%
\begin{eqnarray*}
J_{a,\lambda }\left( u\right)  &\geq &\frac{a}{2}\widehat{m}\left(
\left\Vert u\right\Vert _{D^{1,2}}^{2}\right) +\frac{1}{2}\left( b\left\Vert
u\right\Vert _{D^{1,2}}^{2}+\int_{\mathbb{R}^{N}}\lambda V(x)u^{2}dx\right) -%
\frac{c_{\ast }}{k\overline{\nu }_{1}^{(k)}}\left\Vert u\right\Vert
_{D^{1,2}}^{k} \\
&\geq &\frac{m_{0}a}{2(\delta +1)}\left\Vert u\right\Vert
_{D^{1,2}}^{2(\delta +1)}-\frac{c_{\ast }}{k\overline{\nu }_{1}^{(k)}}%
\left\Vert u\right\Vert _{D^{1,2}}^{k},
\end{eqnarray*}%
which implies that $J_{a,\lambda }\left( u\right) $ is bounded below on $X$
for all $a>0$ and $\lambda >0,$ since $\delta >\frac{k}{2}-1.$ Moreover, for
every $a>0,$ there exists $R_{a}>t_{\overline{B}}:=\left( \frac{2(\delta
+1)c_{\ast }}{k\overline{\nu }_{1}^{(k)}m_{0}a}\right) ^{1/(2\delta +2-k)}$
such that%
\begin{equation*}
J_{a,\lambda }(u)\geq 0\text{ for all }u\in X_{\lambda }\text{ with }%
\left\Vert u\right\Vert _{D^{1,2}}\geq R_{a}.
\end{equation*}

Next, we show that there exists a constant $\overline{R}_{a}>0$ such that $%
J_{a,\lambda }(u)\geq 0$ for all $u\in X_{\lambda }$ with $\left\Vert
u\right\Vert _{\lambda }\geq \overline{R}_{a}.$ Let
\begin{equation}
\overline{R}_{a}=\left( R_{a}^{2}+\frac{2c_{\ast }}{k\overline{\nu }%
_{1}^{(k)}}R_{a}^{k}\right) ^{1/2}>0.  \label{14-8}
\end{equation}%
For $u\in X_{\lambda }$ with $\left\Vert u\right\Vert _{\lambda }\geq
\overline{R}_{a}.$ If $\left\Vert u\right\Vert _{D^{1,2}}\geq R_{a},$ then
the result holds clearly. If $\left\Vert u\right\Vert _{D^{1,2}}<R_{a},$
then it is enough to indicate that $J_{a,\lambda }(u)\geq 0$ when $\int_{%
\mathbb{R}^{N}}\lambda V(x)u^{2}dx\geq \frac{2c_{\ast }}{k\overline{\nu }%
_{1}^{(k)}}R_{a}^{k}.$ Indeed, we have%
\begin{eqnarray*}
J_{a,\lambda }(u) &\geq &\frac{a}{2}\widehat{m}\left( \left\Vert
u\right\Vert _{D^{1,2}}^{2}\right) +\frac{1}{2}\left( b\left\Vert
u\right\Vert _{D^{1,2}}^{2}+\int_{\mathbb{R}^{N}}\lambda V(x)u^{2}dx\right) -%
\frac{c_{\ast }}{k\overline{\nu }_{1}^{(k)}}\left\Vert u\right\Vert
_{D^{1,2}}^{k} \\
&\geq &\frac{1}{2}\int_{\mathbb{R}^{N}}\lambda V(x)u^{2}dx-\frac{c_{\ast }}{k%
\overline{\nu }_{1}^{(k)}}R_{a}^{k}\geq 0.
\end{eqnarray*}%
Hence, we obtain that there exists a constant $\overline{R}_{a}>0$ defined
as (\ref{14-8}) such that%
\begin{equation*}
J_{a,\lambda }(u)>0\text{ for all }u\in X_{\lambda }\text{ with }\left\Vert
u\right\Vert _{\lambda }\geq \overline{R}_{a}.
\end{equation*}%
This completes the proof.
\end{proof}

\begin{lemma}
\label{lem12}Suppose that $N\geq 3,$ conditions $(V1)-(V3),(L3)$ with $%
\delta >\frac{k-2}{2},(D1)^{\prime }$ and $(D2)$ hold. Then for every $a>0$
and $\lambda >0$ one has
\begin{equation}
\overline{\theta }_{a}=:\inf \left\{ J_{a,\lambda }(u):u\in X_{\lambda }%
\text{ with }\left\Vert u\right\Vert _{\lambda }<\overline{R}_{a}\right\} <0.
\label{3.10}
\end{equation}
\end{lemma}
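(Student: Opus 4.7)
The plan is to imitate the argument used for Lemma \ref{lem8}, namely to combine the existence of a point of negative energy supplied by Lemma \ref{lem11} with the nonnegativity of $J_{a,\lambda}$ outside a ball of radius $\overline{R}_{a}$ established in Lemma \ref{lem4}. Since both of these ingredients are already available under the present hypotheses $(V1)$--$(V2)$, $(L3)$ with $\delta>(k-2)/2$, $(D1)^{\prime}$ and $(D2)$, the whole argument reduces to a short geometric observation about where a point with $J_{a,\lambda}<0$ must sit.

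More concretely, I would first invoke Lemma \ref{lem11}: under conditions $(V1)$--$(V2)$, $(D1)^{\prime}$ and $(D2)$ it produces a threshold $\overline{a}_{\ast}>0$ and an element $e\in X_{\lambda}$ with $\|e\|_{\lambda}>\rho$ satisfying $J_{a,\lambda}(e)<0$ for every $0<a<\overline{a}_{\ast}$ and every $\lambda>0$. Second, I would invoke Lemma \ref{lem4}: its hypotheses are exactly those imposed in Lemma \ref{lem12}, and it yields a constant $\overline{R}_{a}>0$ such that $J_{a,\lambda}(u)\geq 0$ whenever $\|u\|_{\lambda}\geq \overline{R}_{a}$.

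The only thing left is to put these two facts together. If the point $e$ produced in the first step satisfied $\|e\|_{\lambda}\geq \overline{R}_{a}$, then the second step would force $J_{a,\lambda}(e)\geq 0$, contradicting $J_{a,\lambda}(e)<0$. Hence $\|e\|_{\lambda}<\overline{R}_{a}$, so $e$ is an admissible competitor in the infimum defining $\overline{\theta}_{a}$, giving
\[
\overline{\theta}_{a}\leq J_{a,\lambda}(e)<0,
\]
which is exactly (\ref{3.10}).

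There is no real obstacle in this proof; the whole matter is a two-line synthesis of earlier lemmas, entirely parallel to Lemma \ref{lem8}. The only cosmetic point worth flagging is the quantifier: the statement as written reads "for every $a>0$", but what is actually proved, and what is used later in Theorem \ref{t4}, is the range $0<a<\overline{a}_{\ast}$ inherited from Lemma \ref{lem11}; this is the same implicit convention already adopted between Lemmas \ref{lem10} and \ref{lem8}.
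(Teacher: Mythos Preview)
Your proposal is correct and follows exactly the paper's own proof, which simply states that the result follows directly from Lemmas \ref{lem11} and \ref{lem4}. Your observation about the quantifier on $a$ (the argument really gives $0<a<\overline{a}_{\ast}$, not all $a>0$) is accurate and matches the same implicit convention the paper uses in the parallel Lemma \ref{lem8}.
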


\begin{proof}
The proof directly follows from Lemmas \ref{lem11} and \ref{lem4}.
\end{proof}

\begin{theorem}
\label{t5}Suppose that $N\geq 3$, conditions $(V1)-(V3),(L2),(L3)$ with $%
\delta >\frac{k-2}{2},(D1)^{\prime }$ and $(D2)$ hold.. Then for every $a>0$
and $\lambda >\overline{\Lambda }_{\ast },$ $J_{a,\lambda }$ has a nonzero
critical point $u_{a,\lambda }^{-}\in X_{\lambda }$ such that
\begin{equation*}
J_{a,\lambda }(u_{a,\lambda }^{-})=\overline{\theta }_{a}<0,
\end{equation*}%
where $\overline{\theta }_{a}$ is as in (\ref{3.10}).
\end{theorem}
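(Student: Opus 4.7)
The plan is to mirror the proof of Theorem \ref{t6}, replacing the hypotheses $(L1)$-type and $(D1)$ inputs with their $(D1)'$ counterparts and invoking Proposition \ref{m4} in place of Proposition \ref{m3}. The strategy is a standard minimization on the sublevel set $\{u \in X_{\lambda} : \|u\|_{\lambda} < \overline{R}_{a}\}$ combined with the Ekeland variational principle and the Cerami compactness condition established earlier.

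First, I would invoke Lemma \ref{lem12} to conclude that $\overline{\theta}_{a} < 0$ and that the infimum defining $\overline{\theta}_{a}$ is taken over the bounded set $\{u \in X_{\lambda} : \|u\|_{\lambda} < \overline{R}_{a}\}$. Next, by Lemma \ref{lem4}, $J_{a,\lambda}(u) \geq 0$ whenever $\|u\|_{\lambda} \geq \overline{R}_{a}$, so every minimizing sequence for $\overline{\theta}_{a}$ stays strictly inside this ball for large $n$. Applying the Ekeland variational principle on the complete metric space $\{u \in X_{\lambda} : \|u\|_{\lambda} \leq \overline{R}_{a}\}$ to the functional $J_{a,\lambda}$ therefore yields a bounded sequence $\{u_{n}\} \subset X_{\lambda}$ with $\|u_{n}\|_{\lambda} < \overline{R}_{a}$ such that
\begin{equation*}
J_{a,\lambda}(u_{n}) \to \overline{\theta}_{a} \quad \text{and} \quad (1+\|u_{n}\|_{\lambda})\|J_{a,\lambda}'(u_{n})\|_{X_{\lambda}^{-1}} \to 0 \quad \text{as } n \to \infty.
\end{equation*}

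Now I would apply Proposition \ref{m4}: since $\delta > \frac{k-2}{2}$, conditions $(V1)$-$(V2), (L2), (L3), (D1)', (D2)$ hold, and $\overline{\theta}_{a} < 0$ is a fixed finite level (so it lies below any chosen $D > 0$), there exists $\overline{\Lambda}_{\ast} = \overline{\Lambda}_{\ast}(a,D) \geq \Lambda_{N}$ such that $J_{\lambda,a}$ satisfies the $(C)_{\overline{\theta}_{a}}$-condition in $X_{\lambda}$ for every $\lambda > \overline{\Lambda}_{\ast}$. Consequently, up to a subsequence, $u_{n} \to u_{a,\lambda}^{-}$ strongly in $X_{\lambda}$ for some $u_{a,\lambda}^{-} \in X_{\lambda}$. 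Strong convergence together with the $C^{1}$-regularity of $J_{a,\lambda}$ yields $J_{a,\lambda}'(u_{a,\lambda}^{-}) = 0$ and $J_{a,\lambda}(u_{a,\lambda}^{-}) = \overline{\theta}_{a}$. Since $J_{a,\lambda}(0) = 0 > \overline{\theta}_{a}$, the limit $u_{a,\lambda}^{-}$ is nontrivial, completing the proof.

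The main (indeed essentially the only) obstacle is verifying that the Cerami sequence produced by Ekeland's principle falls within the regime handled by Proposition \ref{m4}; this is automatic once one checks that $\|u_{n}\|_{\lambda} < \overline{R}_{a}$ keeps the sequence bounded and that the chosen $D$ majorizes $|\overline{\theta}_{a}|$, so no new analytic estimates are required beyond those already collected in Lemmas \ref{lem4}, \ref{lem11}, \ref{lem12} and Proposition \ref{m4}. Everything else is a routine translation of the argument used for Theorem \ref{t6} to the $(D1)'$ setting.
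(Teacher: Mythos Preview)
Your proposal is correct and follows essentially the same route as the paper: invoke Lemma~\ref{lem12} for $\overline{\theta}_{a}<0$, use Ekeland's variational principle on the ball $\{\|u\|_{\lambda}<\overline{R}_{a}\}$ to produce a bounded (Cerami-type) minimizing sequence, and then apply Proposition~\ref{m4} to obtain strong convergence to a nontrivial critical point at level $\overline{\theta}_{a}$. The only difference is that you spell out the role of Lemma~\ref{lem4} in keeping the minimizing sequence inside the ball, which the paper leaves implicit.
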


\begin{proof}
By Lemma \ref{lem12} and the Ekeland variational principle, we obtain that
there exists a minimizing bounded sequence $\{u_{n}\}\subset X_{\lambda }$
with $\Vert u_{n}\Vert _{\lambda }<\overline{R}_{a}$ such that%
\begin{equation*}
J_{a,\lambda }(u_{n})\rightarrow \overline{\theta }_{a}\text{ and }%
J_{a,\lambda }^{\prime }(u_{n})\rightarrow 0\text{ as }n\rightarrow \infty .
\end{equation*}%
Then from Proposition \ref{m4} it follows that there exist a subsequence $%
\{u_{n}\}$ and $u_{a,\lambda }^{-}\in X_{\lambda }$ with $\Vert u_{a,\lambda
}^{-}\Vert _{\lambda }<\overline{R}_{a}$ such that $u_{n}\rightarrow
u_{a,\lambda }^{-}$ strongly in $X_{\lambda }.$ This indicates that $%
J_{a,\lambda }^{\prime }(u_{a,\lambda }^{-})=0$ and $J_{a,\lambda
}(u_{a,\lambda }^{-})=\overline{\theta }_{a}<0.$ The proof is complete.
\end{proof}

\textbf{We are now ready to prove Theorems \ref{t3} and \ref{t4}: }Theorems %
\ref{t3} directly follows from Theorem \ref{t11}. By virtue of Theorems \ref%
{t12} and \ref{t5}, for every $0<a<\overline{a}_{\ast }$ and $\lambda >%
\overline{\Lambda }_{\ast },$ Eq. $(K_{a,\lambda })$ admits two positive
solutions $u_{a,\lambda }^{-}$ and $u_{a,\lambda }^{+}$ satisfying $%
J_{a,\lambda }(u_{a,\lambda }^{-})<0<J_{a,\lambda }(u_{a,\lambda }^{+}).$ In
particular, $u_{a,\lambda }^{-}$ is a ground state solution of Eq. $%
(K_{a,\lambda }).$ Hence, Theorem \ref{t4} is proved.

\end{document}